\documentclass[12pt,reqno,oneside]{amsart}
\usepackage[latin1]{inputenc}
\usepackage[T1]{fontenc}
\usepackage{amsmath,amsfonts,amscd,amssymb,amsthm}
\usepackage{mathtools}
\usepackage{dsfont}
\usepackage{color}
\usepackage[mathscr]{euscript}
\usepackage{comment}
\usepackage{graphicx}
\newcommand{\cF}{\mathcal{F}}
\usepackage{placeins}
\usepackage{extsizes}
\usepackage{float}
\usepackage{multirow}
\usepackage{pdflscape}
\usepackage{caption}
\usepackage{pgfplots}
\usepackage{subcaption}
\usepackage{mathabx}
\usepackage{tikz}
\usepackage{hyperref}
\usepackage[dvipsnames]{xcolor}
\usetikzlibrary{decorations.pathreplacing}
\usetikzlibrary{backgrounds} 
\usetikzlibrary{calc}
\pgfdeclarelayer{bg}    
\pgfsetlayers{bg,main}
\usepackage{rotate}
\usepackage[noend]{algorithmic}
\usepackage{algorithm}
\usepackage{setspace}
\definecolor{color1bg}{HTML}{f73d28}
\definecolor{color2bg}{HTML}{FA8072}
\definecolor{bblue}{HTML}{00BFFF}
\definecolor{bblue2}{HTML}{00ffff}

\usetikzlibrary{arrows,calc}
\tikzset{
	>=stealth',
	help lines/.style={dashed, thick},
	axis/.style={<->},
	important line/.style={thick},
	connection/.style={thick, dotted},
}

\usetikzlibrary{shadows}
\usetikzlibrary{backgrounds}
\tikzset{
	diagonal fill/.style 2 args={fill=#2, path picture={
			\fill[#1, sharp corners] (path picture bounding box.south west) -|
			(path picture bounding box.north east) -- cycle;}},
	reversed diagonal fill/.style 2 args={fill=#2, path picture={
			\fill[#1, sharp corners] (path picture bounding box.north west) |- 
			(path picture bounding box.south east) -- cycle;}}
}
\usetikzlibrary{arrows.meta}

\makeatletter
\@addtoreset{equation}{section}
\makeatother
\newcounter{as}[section]

\newcommand{\om}{\omega}
  
\newcommand{\eps}{\epsilon}

\title{Metastability of random maps: a resolvent approach}

\author{Diego Marcondes \and Sandro Vaienti\\ $\;$ \\\today}
\address{Mathematical Sciences Institute and France-Australia Mathematical Sciences and Interactions ANU-CNRS International Research Lab, The Australian National University}
\email{\href{diego.marcondes@anu.edu.au}{diego.marcondes@anu.edu.au} }
\address{Universit\'e de Toulon,  Aix Marseille Universit\'e,  CNRS, CPT, 13009 Marseille, France}
\email{\href{vaienti@cpt.univ-mrs.fr}{vaienti@cpt.univ-mrs.fr} }
\allowdisplaybreaks
\newtheorem{theorem}{Theorem}[section]
\newtheorem{remark}[theorem]{Remark}

\newtheorem{definition}[theorem]{Definition}

\newtheorem{corollary}[theorem]{Corollary}
\newtheorem{lemma}[theorem]{Lemma}
\newtheorem{proposition}[theorem]{Proposition}

\newcommand{\mc}[1]{{\mathcal #1}}
\newcommand{\mf}[1]{{\mathfrak #1}}
\newcommand{\mb}[1]{{\mathbf #1}}
\newcommand{\bb}[1]{{\mathbb #1}}
\newcommand{\bs}[1]{{\boldsymbol #1}}
\newcommand{\ms}[1]{{\mathscr #1}}

\newcommand{\mfB}{\mathfrak{B}}
\newcommand{\mfC}{\mathfrak{C}}
\newcommand{\mfD}{\mathfrak{D}}

\newcommand{\mfF}{\mathfrak{F}}
\newcommand{\mfM}{\mathfrak{M}}
\newcommand{\mfR}{\mathfrak{R}}

\newcommand{\mcD}{\mathcal{D}}
\newcommand{\mcE}{\mathcal{E}}
\newcommand{\mcF}{\mathcal{F}}

\newcommand{\mcL}{\mathcal{L}}

\newcommand{\mcV}{\mathcal{V}}

\newcommand{\mbR}{\mathbb{R}}

\newcommand{\mbM}{\mathbb{M}}
\newcommand{\mbN}{\mathbb{N}}
\newcommand{\mbP}{\mathbb{P}}
\newcommand{\mbQ}{\mathbb{Q}}

\newcommand{\bli}{\begin{linenomath}}
\newcommand{\eli}{\end{linenomath}}

\usepackage{lineno}

\definecolor{bblue}{rgb}{.2,0.2,.8}

\begin{document}
	\maketitle

    \begin{abstract}
		We present a general framework to study the metastability of random perturbations of dynamical systems. It integrates techniques from the theory of Markov processes, in particular the resolvent approach to metastability, with the spectral analysis of transfer operators associated to the dynamics. The proposed framework is applied to study the metastability of one-dimensional dynamical systems generated by a map randomly perturbed by sub-Gaussian noise. \\       

        \noindent \textbf{Keywords}: Metastability, Random perturbations, Resolvent approach, Expanding maps, Chen-stein method
	\end{abstract}

    \tableofcontents
	
	\section{Introduction}
	
	Metastability is a characteristic of time-evolving physical systems that quickly attain a \textit{false-equilibrium}, that holds for a very long time before coming abruptly to an end, being followed by either the \textit{true-equilibrium} state, or a different \textit{false-equilibrium}. Metastable systems have been historically studied from the point of view of Markov processes and from that of perturbation of dynamical systems.
	
	Metastable Markov processes are such that, starting from a specific subset of the state space, the process will quickly equilibrate inside this subset, e.g., attain a \textit{false-stationary} distribution supported in this subset. But then, a sequence of rare events will take the process out of this equilibrium and into another one, that might be the real stable state of the process or another \textit{false-equilibrium}. 
	
	A dynamical system with stable invariant subsets, upon being deterministically or randomly perturbed, may become ergodic. However, when starting from a stable subset of the original system, the perturbed system may spend a very long time in the subset, as if it had attained equilibrium, before reaching another stable subset of the original system. The asymptotic behaviour of the system regarding the visits to the stable subsets and the limit of its unique invariant measure when the \textit{level of perturbation} tends to zero characterises the \textit{stability} of the system to perturbations. 
    
    Special classes of random perturbations of dynamical systems lead to Markov processes that can be analysed with their tools. In this paper, we present a general framework to study the metastability of random perturbations of dynamical systems that combines techniques from Markov processes, in particular the resolvent approach to metastability proposed by \cite{landim2021resolvent}, with the spectral analysis of operators associated to perturbations of dynamical systems, see the monographs \cite{Liv_demer_kam}, \cite{Hennion} and \cite{pene} for an advanced exposition of this topic. The proposed framework is applied to study the metastability of one-dimensional dynamical systems generated by a map randomly perturbed by sub-Gaussian noise. 
    
	\subsection{Metastability of Markov processes}
    
	The concept of metastability, motivated by a homonym physical phenomenon, was first introduced in the context of Markov process in the seminal work by Cassandro, Galves, Olivieri and Vares \cite{cassandro1984metastable}, where a rigorous method for deducing the metastable behaviour of Markov processes, called the \textit{pathwise approach to metastability}, was proposed. 
	
	Based on the theory of large deviations developed by \cite{ventsel1970small}, this approach considers that metastability occurs when a process ``has a unique stationary probability measure, but if the initial conditions are suitably chosen then the time to get to the asymptotic state becomes very large, and during this time the system behaves as if it were described by another stationary measure; finally, and abruptly, it goes to the true equilibrium'' \cite{cassandro1984metastable}. 
	
	Clearly, this approach only applies to processes that have a true equilibrium, but a \textit{metastable behaviour} can also be observed in processes that, starting from a subset of the state space, stays there for a very long time, as if it had stabilised, until, abruptly, it attains another subset of the state space where an analogous behaviour may happen; in particular, there might not be any stable subset, from which the process cannot leave.
	
	In this context, in the 2000s, a new approach for metastability based on potential theory was proposed by \cite{bovier2002metastability,bovier2004metastability,gayrard2005metastability}. In the \textit{potential theoretic approach to metastability}, sharp estimates for the transition times between specific subsets of the state space, called \textit{metastable wells}, are derived based on potential theory. In this approach, the metastability phenomenon is interpreted as a sequence of visits to different metastable wells and characterised via the precise analysis of the hitting times of these wells. A detailed account of the potential theoretic approach to metastability can be found in \cite{bovier2016metastability}.
	
	In this paper, we are concerned with the \textit{martingale approach to metastability} proposed by \cite{beltran2012tunneling,beltran2010tunneling,beltran2015martingale} that is based on the characterisation of Markov processes as unique solutions of martingale problems. Similar to the potential theoretic approach, metastability is described as the dynamics of visits to \textit{metastable wells}. 
	
	Informally, the Markov process is parameterised by some quantity, e.g., by the number of particles, by the level of perturbation or by some physical quantity, such as temperature, and the metastable behaviour is characterised in the limit when the respective parameter converges to $0$ or $\infty$. In metastable processes, there are $\kappa \geq 2$ disjoint subsets of the state space, called wells, such that, starting from a configuration or a measure supported in one of these wells, in the limit, the process will spend an exponentially distributed random time in this well before jumping to another one, spending a negligible amount of time outside the wells. In particular, in the limit, the dynamic of visits to the metastable wells, properly time scaled, will be represented by a Markov process.

    Through the martingale characterisation of Markov processes, sufficient conditions for metastability were derived based on potential theory by assuming the existence of an attractor inside each well \cite{beltran2012tunneling,beltran2010tunneling}, i.e., a state that is visited with probability tending to one before the process leaves the respective well. Sufficient conditions based on the mixing time and on the relaxation time of the process reflected at the boundary of the metastable wells were also derived \cite{beltran2015martingale}. The martingale approach has been successfully applied to many models, and we refer to \cite{landim2019metastable} for a review.
	
	Recently, a condition on the solution of a resolvent equation associated with the process generator was shown by \cite{landim2021resolvent} to be necessary and sufficient for the metastability of Markov processes when starting from a configuration in a metastable well. In particular, the resolvent approach to metastability has been successfully applied to derive the metastable behaviour of processes for which the previous techniques \cite{beltran2012tunneling,beltran2010tunneling,beltran2015martingale} could not be applied in the context of interacting particle systems \cite{choi2024emph,kim2024metastable,kim2025hierarchical,lee2022non,stephan2025coarse} and diffusion processes \cite{landim2025gamma,landim2023metastability,landim2024metastability,landim2025one}. 
    
    However, the resolvent approach, and to the best of our knowledge the martingale approach, have not yet been applied to Markov jump processes with uncountable state space such as randomly perturbed maps of the interval. Metastability of Markov processes via the martingale approach has been historically studied for interacting particle systems and diffusion processes, and in this paper we broaden its scope by combining the resolvent approach with spectral techniques to study the metastability of random perturbations of dynamical systems generated by a map. 

    \subsection{Metastability of random perturbations of a map}

    In this paper, we will be concerned with the metastability of dynamical systems generated by a randomly perturbed map, a framework known in the literature as {\em annealed} perturbation. This kind of metastability was first studied in \cite{bahsoun2013escape}, see also \cite{bahsoun_hu_vaienti, bahsoun_schmeling_vaienti} for complementary results,  and, in its simplest form, can be stated as follows. 
    
    Let $T$ be a piecewise expanding map of the interval that admits two ergodic invariant densities $\rho_1, \rho_2$ with support in disjoint sets $I_j, j=1,2$, and consider a random transformation built upon $T$. It is well known that random transformations induce a homogeneous Markov chain, which we assume possesses a unique smooth stationary probability measure with density $\rho_\eps$ where $\eps > 0$ represents the \textit{level of perturbation}. This implies that the perturbation brought about two small holes allowing leakage of mass between the two ergodic initial subsystems in $I_j, j=1,2$.
    
    In \cite{bahsoun2013escape} it was proved that when the size of the holes tend to zero, or equivalently the intensity $\eps$ of the perturbation tends to zero, the density $\rho_\eps$ converges in the $L^1$ norm to a convex combination of $\rho_1$ and $\rho_2$. This convergence is known in the literature as {\em stochastic stability}. It was shown that the ratio of the weights in the convex combination is equal to the ratio of the averages of the measures of the two holes. Moreover, those weights coincide with the ratio of the maximal eigenvalues of the transfer operators restricted to the original invariant subsets and perturbed by removing the respective holes, which are related to two {\em open} systems.
    
    The approach of \cite{bahsoun2013escape} was inspired on the paper by Gonzalez-Tokman, Hunt and Wright \cite{gonzalez2011approximating} where similar results were originally proved under deterministic perturbations for piecewise expanding maps of the interval. The assumption that the derivative was uniformly larger than one was then removed in \cite{bahsoun2011metastability} which dealt with maps admitting a neutral fixed point. The recent work \cite{gonzalez2025averaging} has studied metastability under the general framework of \cite{gonzalez2011approximating}, but considering \textit{quenched} random perturbations\footnote{Beside the aforementioned papers above, we also cite \cite{Froyland_Stancevic_2010, Froyland_Stancevic_2013, Gonzalez_tokman_quas_quarantine, Horan_21}, where metastability and related topics were treated in the context of quenched perturbed dynamical systems. Those papers have also in common the use of the multiplicative ergodic theorem to deal with cocycles of transfer operators.}. 
    
    A quenched random dynamical system is defined through compositions of maps $T_\omega$ from a collection $\{T_\omega\}_{\omega\in\Omega}$ induced by an ergodic invertible measure-preserving map $\sigma:\Omega\to\Omega$ on a probability space $(\Omega,\cF,\mu)$ that creates a map cocycle, or random dynamical system, namely $T_\omega^n := T_{\sigma^{n-1}\omega}\circ\cdots\circ T_{\sigma\omega}\circ T_\omega$. The randomness of the system arises from selecting $\omega$ from $\Omega$ with probability $\mu$. The annealed perturbation differs from this setting since, at each iteration, a new $\omega_{n}$ is independently draw from $\Omega$, instead of being determined deterministically from the previous one, and the random system is $T_{\bar{\omega}}^n := T_{\omega_{n}}\circ\cdots\circ T_{\omega_{1}}$. The annealed case  allows us to introduce the stationary measure to characterise the averaged distribution of the orbits; in the quenched case instead one considers a family of probability measures satisfying the equivariance condition $\mu_{\omega,0}\circ T_{\omega}^{-1}=\mu_{\sigma \omega,0}.$ These measures will be the underlying probabilities to establish limit theorems.
        
    The paper \cite{gonzalez2011approximating}, and all that followed, cited above, rely heavily on the perturbation results of Keller and Liverani \cite{KL, KL2}, which provide a description of metastability in terms of the spectrum of the transfer (Perron-Frobenius) operator. Since the map $T$ preserves two ergodic densities, its transfer operator has two eigenfunctions associated with the maximal eigenvalue $1$, which has geometric multiplicity $2$. By perturbing $T$, the system now admits a unique invariant (or stationary) measure, hence the maximal eigenvalue split into two eigenvalues of geometric multiplicity $1$, the first still equal to $1$, the second lesser, but very close to $1$. The rate of mixing of the measure with density $\rho_\eps$, being given by the spectral gap between the first and the second eigenvalues, goes to zero as $\eps$ tends to zero, implying that the invariant (or stationary) measure mixes slowly, with the orbits of the system lingering in the almost invariant \textit{metastable} sets. This behaviour is what we mean by {\em metastability} in the context of perturbation of dynamical systems, and can be naturally extended to systems with more than two ergodic components.
    
    The use of spectral techniques in \cite{gonzalez2011approximating} also inspired the paper by Dolgopiayt and Wright \cite{Dolgopyat_Wright}, which studies metastability in a sense close to that of the resolvent approach in Markov processes. They considered a deterministically perturbed map $T_{\epsilon}$ of a uniformly expanding map of the interval $T$ and studied the Markov process associated with $T_{\eps}$ which, starting from a point $x$, remains in this state for a mean one exponentially distributed random time before jumping to $T_{\eps}(x)$. Then, they considered a continuous-time stochastic process, with the number of states equal to the number of invariant components of the original map, representing the component the Markov process associated to $T_{\eps}$ is at each time. This is clearly not a Markov process, but they showed that, when $\eps$ tends to zero, this process, properly time rescaled, starting from an invariant measure of the original map converges to a Markov process. 
    
    In particular, this means that, in the limit starting from an invariant measure, the system spends an exponential time inside each invariant set before jumping to another. They prove that the rate of jumping from the component $i$ to $j$ is $\beta_{i,j}$ verifying $ \mu_i(H_{ij,\epsilon}) = \epsilon\beta_{i,j} + o(\epsilon)$ in which $H_{ij, \epsilon}$ is the hole in the interval $I_i$ whose points are sent into $I_j,$ and $\mu_i$ is the invariant measure on $I_i$. In this instance, the timescale of the process is $\eps$. This result is analogous to that which can be obtained via the martingale approach to metastability presented in \cite{beltran2015martingale}, although it was obtained with entirely different techniques, i.e., relying on the perturbation results of Keller and Liverani \cite{KL, KL2}. In \cite{Dolgopyat_Wright} they also showed that the diffusion coefficient for an observable of bounded variation can be approximated by the diffusion coefficient of the limiting Markov process. Analogous results have recently been obtained in \cite{gonzalez2025jumping} for quenched random perturbations.
    
    \subsection{A resolvent approach to the metastability of random maps}

    Although related to the papers discussed above, our approach to metastability is singular within the theory of randomly perturbed dynamical systems. Following the resolvent approach to metastability, we consider the Markov process associated with a random transformation of an expanding map and study the limit of the time scaled {\em order process}, that represents the invariant set, or {\em metastable well}, the associated Markov process is at each time.
    
    However, unlike \cite{Dolgopyat_Wright,gonzalez2025jumping}, we consider annealed instead of quenched and deterministic perturbations, and the limit of the order process is taken when the process starts from a point in a metastable well, instead from a distribution supported in it. This establishes the metastability over random orbits with \textit{any} initial value, rather than on average over orbits starting from an invariant set. Furthermore, by applying the Chen-Stein method \cite{barbour1988,chen1975,stein1972}, we show that stochastic stability follows from the resolvent approach as a corollary.

    The main tool in our analysis is the resolvent approach to metastability of \cite{landim2021resolvent}, which we extend to Markov jump processes in uncountable state spaces. Even though we rely on the perturbation results of Keller and Liverani \cite{KL, KL2}, unlike \cite{bahsoun2013escape} and the other papers discussed above, they are not used to \textit{describe} metastability, but are rather tools to prove a mixing condition that is sufficient for metastability under the resolvent approach. The spectral analysis is limited to the operator associated to a random map that equals the perturbed one for points not in the hole, and that can be defined in any desired way in the hole. This leads to less technical and simplified analysis compared to \cite{bahsoun2011metastability,bahsoun2013escape,Dolgopyat_Wright,gonzalez2011approximating,gonzalez2025averaging,gonzalez2025jumping}. For instance, the perturbed operator does not have discontinuities, a great source of technical difficulties in the aforementioned papers.

    In particular, the lack of discontinuities, allow to analyse \textit{continuous} perturbations, improving the result of \cite{bahsoun2013escape}. A crucial assumption in that paper (see Remark 3.10 therein) is that the map $\omega \mapsto T_{\omega}$ has a finite range\footnote{This was also the case for the quenched perturbation in \cite{gonzalez2025jumping}.}, or equivalently, that $\{T_{\omega}\}_{\omega \in \Omega}$ has a finite number of distinct elements. In this paper, we assume there are uncountably many maps in $\{T_{\omega}\}_{\omega \in \Omega}$, a case that can hardly be treated by spectral analysis alone.

    \subsection{Paper structure}
	
	In Section \ref{Sec_Meta}, we outline the general framework for metastability based on the resolvent approach in the context of random maps, and in Section \ref{Sec_Suf} we present sufficient conditions for the metastability of random mixing maps of the interval. In Section \ref{SecCS}, we prove that metastability implies the stochastic stability of the random map, namely, the strong convergence of the stationary measure of the randomly perturbed map to a convex combination of the invariant measures of the original map. We present rates for this strong convergence based on the Chen-Stein method \cite{barbour1988}. In Sections \ref{Sec_applications} and \ref{SecEx2}, we apply the general approach to expanding piecewise linear maps with two and three invariant components, respectively. In Section \ref{Sec_RM} we give our final remarks and proofs for the results are presented in Section \ref{Sec_proofs}.

    \subsection{Notation}

    For each measurable set $A \subset \mathbb{R}$, we denote by $\chi_{A}(x) = \mathds{1}\{x \in A\}$ its characteristic function and by $\partial A = \widebar{A}\setminus A$ its boundary. The Lebesgue measure of $A$ is denoted by $\text{Leb}(A)$. For $a,b \in \mathbb{R}$ we denote $a \vee b = \max\{a,b\}$ and $a \wedge b = \min\{a,b\}$. Given two sequences $\{a_{\eps}: \eps > 0\}$ and $\{b_{\eps}: \eps > 0\}$, we say that $a_{\eps} \ll b_{\eps}$ if $\lim_{\eps \to 0} a_{\eps}/b_{\eps} = 0$.
	
	\section{A resolvent approach to the metastability of randomly perturbed maps}
	\label{Sec_Meta}
	
	Fix a closed interval $I \subset \mbR$ and let $T: I \mapsto I$ be a map satisfying:	
	\begin{itemize}
		\item[\textbf{(A1)}] There exists a collection of disjoint subsets $I_{1},\dots,I_{\kappa}$ of $I$ for $\kappa < \infty$ such that:
		\begin{enumerate}
			\item[\textbf{(A1.1)}] Each set $I_{i}$ can be decomposed as
			\begin{equation*}
				I_{i} = \bigcup_{j = 1}^{\kappa_{i}} I_{ij}
			\end{equation*}
			in which $I_{ij}$ are open intervals where $T$ is continuous and one-to-one with $\kappa_{i} < \infty$.
			\item[\textbf{(A1.2)}] This collection of sets essentially covers $I$:
			\begin{equation*}
				\widebar{\bigcup_{i} I_{i}} = I 
			\end{equation*}
			\item[\textbf{(A1.3)}] For each set in the collection it holds $T(I_{i}) \subset \widebar{I_{i}}$ and $$T^{-1}(\partial I_{i}) \cap I_{i} = \{b^{(i)}_{1},\dots,b_{l_{i}}^{(i)}\}$$ with $1 \leq l_{i} < \infty$. We call $b^{(i)}_{j}$ \textit{infinitesimal holes}.
            \item[\textbf{(A1.4)}] For $i,j = 1,\dots,\kappa$, there exists a sequence $i_{0} = i, i_{1},\dots,i_{r} = j$ with $r \geq 1$ such that $T(I_{i_{l}}) \cap \partial I_{i_{l + 1}} \neq \emptyset$ for all $l = 1,\dots,r$.
		\end{enumerate}
	\end{itemize}
	To fix ideas, the reader can consider the map presented in Figure \ref{f1}, in which $I = [0,1]$, $I_{1} = (0,1/2)$ and $I_{2} = (1/2,1)$.
	
	Fix a probability space $(\Omega,\mcF,\mbP)$ and let $\{T_{\epsilon}: \epsilon > 0\}$ be a collection of $\mcF$-measurable maps $T_{\epsilon}: \Omega \times I \mapsto I$. For $x \in I$ fixed, denote by $\mbM_{\epsilon}^{x}$ the probability measure induced by $T_{\epsilon}(\cdot,x)$ in the measurable space $(I,\mfB_{I}),$ where $\mfB_{I}$ is the Borel $\sigma$-algebra of $I$. This probability measure satisfies
	\begin{equation*}
		\mbM_{\epsilon}^{x}(A) = \mbP(\omega \in \Omega: T_{\epsilon}(\omega,x) \in A)
	\end{equation*}
	for $A \in \mfB_{I}$. We denote simply by $T_{\epsilon}(x)$ the random variable $T_{\epsilon}(\cdot,x)$ for $x \in I$ and $\epsilon > 0$ and by $S \coloneqq \{1,\dots,\kappa\}$ the indexes of the invariant components of $T$. We make the following assumptions about the random maps $\{T_{\epsilon}: \epsilon > 0\}$:
	\begin{itemize}
		\item[\textbf{(A2)}] For all $x \in I$ and $\epsilon > 0$, the probability measure $\mbM_{\epsilon}^{x}$ is absolutely continuous wrt Lebesgue measure\footnote{We stress that this assumption is more general than that in \cite{bahsoun2013escape}, that the map $\omega \mapsto T_{\eps}(\omega,\cdot)$ has finite range.}. In particular, there exists a function $\rho_{\epsilon}: I \times I \mapsto \mbR_{+}$ such that, for $x \in I$ fixed, $\rho_{\epsilon}(x,\cdot)$ is a probability density function and
		\begin{align*}
			\mbM_{\epsilon}^{x}(A) = \int_{A} \rho_{\epsilon}(x,y) \ dy
		\end{align*}
		for all $A \in \mfB_{I}$.
		\item[\textbf{(A3)}] There exist constants $\gamma_{1},\gamma_{2} > 0$ such that, for all $\epsilon > 0$ and $t > 0$,
		\begin{equation}
			\label{sub_gaussian}
			\sup\limits_{x \in I} \mbP\left(\omega \in \Omega: |T_{\epsilon}(\omega,x) - T(x)| > t\right) \leq \gamma_{1} \exp -\frac{t^{2}}{\gamma_{2} \epsilon^{2}}
		\end{equation}
		and
		\begin{align}
			\label{barrier}
			\limsup\limits_{\epsilon \to 0} \max_{i \in S}  \sup\limits_{x \in I_{i}} \mbP\left(T_{\epsilon}(x) \in I_{i}^{c}\right) = 0.
		\end{align}
	\end{itemize}
	
	Condition \eqref{sub_gaussian} in \textbf{(A3)} means that $T_{\epsilon}$ is a random perturbation of $T$ with the absolutely continuous measure induced by $T_{\epsilon}(x)$ concentrating around $T(x)$ with a sub-Gaussian like tail decay\footnote{Formally, a random variable $X$ is sub-Gaussian if it concentrates around its mean $\mu$ as $\mbP\left(|X - \mu| > t\right) \leq 2 \exp -t^{2}/C^{2}$ for all $t > 0$ and some constant $C > 0$. Condition \eqref{sub_gaussian} is actually weaker than assuming that $T_{\epsilon}(x)$ is sub-Gaussian; for instance, we are not assuming that the expected value of $T_{\epsilon}(x)$ is $T(x)$ and we could have $\gamma_{1} > 2$.}. 
	
	Condition \eqref{barrier} in \textbf{(A3)} implies that, as $\epsilon \to 0$, the probability of $T_{\epsilon}$ mapping a point in the subset $I_{i}$ to its complement converges to zero. This assumption can be interpreted as the existence of an energy barrier that the random dynamic needs to overcome to escape from $I_{i}$, that causes the escape probability to be small starting from any point in $I_{i}$.
		
	Examples of random maps that satisfy \textbf{(A2)} and \textbf{(A3)} are those perturbed by additive noise:
	\begin{equation}
        \label{additive_noise}
		T_{\epsilon}(\omega,x) = T(x) + \sigma_{\epsilon}^{x}(\omega)
	\end{equation}
	for random variables $\sigma_{\epsilon}^{x}, x \in I,$ which are absolutely continuous with support in $[\inf I - T(x),\sup I - T(x)]$ and satisfy
	\begin{equation*}
		\mbP\left(|\sigma_{\epsilon}^{x}| > t\right) \leq \gamma_{1} \exp -\frac{t^{2}}{\gamma_{2} \epsilon^{2}}
	\end{equation*}
	for all $t > 0$. 
	
	In particular, \textbf{(A2)} and \textbf{(A3)} are satisfied when $\sigma_{\epsilon}^{x}$ is the truncation to $[\inf I - T(x),\sup I - T(x)]$ of a mean zero sub-Gaussian random variable $\sigma_{\epsilon}$ satisfying
	\begin{equation}
		\label{add_noise}
		\mbP\left(|\sigma_{\epsilon}| > t\right) \leq 2 \exp -\frac{t^{2}}{\gamma_{2} \epsilon^{2}},
	\end{equation}
	for $x \notin T^{-1}(\bigcup_{i}(\partial I_{i} + B_{\delta}))$ with $\delta > 0$, in which $\partial I_{i} + B_{\delta}$ is the union of the $\delta$-balls centred at the points in the boundary of $I_{i}$. For $x \in T^{-1}(\bigcup_{i}(\partial I_{i} + B_{\delta}))$, either $\mbP(\sigma_{\epsilon}^{x} > 0)$ or $\mbP(\sigma_{\epsilon}^{x} < 0)$ should converge to zero for \eqref{barrier} to hold, depending on $x \in I_{i}$ being closer to the right or left limit of the interval $I_{i,j}$ that contains it, respectively. 
	
	For example, $\sigma_{\epsilon}$ can be taken as the truncation of a mean zero Gaussian distribution with variance $\epsilon^{2}$ or a uniform distribution in $[-\epsilon,\epsilon]$, while for $x \in T^{-1}(\bigcup_{i}(\partial I_{i} + B_{\delta}))$, $\sigma_{\epsilon}^{x}$ can be a truncated Gaussian distribution with a suitable non-zero mean or a uniform distribution that is not symmetric around zero, for instance with support $[-\epsilon,\epsilon^{q}]$ for $q > 2$.
	
	Maps randomly perturbed by multiplicative noise of the form
    \begin{equation}
        \label{multiplicative}
		T_{\epsilon}(\omega,x) = (1 + \sigma_{\epsilon}^{x}(\omega)) \, T(x)
	\end{equation}
    with $\sigma_{\eps}^{x}$ absolutely continuous can also be considered. In this case, property \textbf{(A2)} holds for all $x \in I$ with $T(x) \neq 0$ and \textbf{(A3)} can be achieved by taking $\sigma_{\epsilon}^{x}$ concentrating around zero with probability high enough as $\eps \to 0$.
    
	\subsection{Markov chain associated with randomly perturbed maps}
	\label{Sec1}
	
	Each randomly perturbed map $T_{\epsilon}$ has an associated Markov chain. For $\epsilon > 0$, $x \in I$ and a sequence $\{\omega_{n}: n \geq 0\}$ of random elements, sampled independently from $\Omega$ with probability measure $\mbP$, consider the Markov chain $\{X_{n}^{\epsilon}: n \in \mbN\}$ with state space $I$ satisfying $X_{0}^{\epsilon} = x$ and
	\begin{equation*}
		X_{n+1}^{\epsilon} = T_{\epsilon}(\omega_{n},X_{n}^{\epsilon}).
	\end{equation*}	
	This is the Markov chain with transition density kernel $\rho_{\epsilon}(x,y)$. We assume that, for all $\epsilon > 0$, $X_{n}^{\epsilon}$ is aperiodic and has one absolutely continuous invariant\footnote{From now on we make no distinction between invariant and stationary measure in the context of Markov chains and processes. We adopt the term invariant to be consistent with dynamical system theory.} measure (ACIM):
	\begin{itemize}
		\item[\textbf{(A4)}] For all $\epsilon > 0$, the Markov chain $\{X_{n}^{\epsilon}: n \in \mbN\}$ is aperiodic and has a unique ACIM $\mu_{\epsilon}$ with probability density function $p_{\epsilon}$ wrt Lebesgue measure.
	\end{itemize}
	In view of \textbf{(A3)}, this assumption is not strong as there is quite flexibility on the selection of the random perturbation. For instance, let $\rho_{\epsilon}^{n}(x,y)$ be the $n$-step transition density function defined recursively as $\rho_{\epsilon}^{1}(x,y) = \rho_{\epsilon}(x,y)$ and
	\begin{equation*}
		\rho_{\epsilon}^{n}(x,y) = \int_{I} \rho_{\epsilon}^{n-1}(x,z)\rho_{\epsilon}(z,y) \ dz
	\end{equation*}
	for $x,y \in I$ and $n \geq 2$. The next proposition states that if $\rho_{\epsilon}^{n_{\epsilon}}(x,y)$, for a $n_{\eps} < \infty$, is bounded away from zero for $(x,y)$ in the support of $\rho_{\eps}(x,y)$, then $X_{n}^{\epsilon}$ has a unique ACIM. In particular, this condition holds trivially with $n_{\epsilon} = 1$ for additive truncated Gaussian noise (cf. \eqref{add_noise}) independently of the map $T$.

    \begin{proposition}
		\label{prob_cone_ACIM}
		Fix $\epsilon > 0$. If there exist $n_{\eps} < \infty$ and $A_{\eps} \subset I$, with $\text{Leb}(A_{\eps}) > 0$ and $\rho_{\eps}(x,y) = \rho_{\eps}(x,y) \chi_{A_{\eps}}(y)$ for all $x,y \in I$, such that
        \begin{align}
			\label{cond_unique_ACIM}
			\inf_{x \in I, y \in A_{\eps}} \rho_{\epsilon}^{n_{\epsilon}}(x,y) > 0,
		\end{align}
        then $X_{n}^{\epsilon}$ has a unique ACIM.
	\end{proposition}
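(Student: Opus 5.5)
This is a Doeblin-type argument: the lower bound \eqref{cond_unique_ACIM} gives a uniform minorisation for the $n_\eps$-step kernel, which yields both existence (by contraction) and uniqueness. Set $c_{\eps} := \inf_{x \in I, y \in A_{\eps}} \rho_{\eps}^{n_{\eps}}(x,y) > 0$ and write $\nu_{\eps}$ for the normalised Lebesgue measure on $A_{\eps}$, so that $\nu_{\eps}(dy) = \text{Leb}(A_{\eps})^{-1}\chi_{A_{\eps}}(y)\,dy$. With $\delta_{\eps} := c_{\eps}\,\text{Leb}(A_{\eps})$ we get the minorisation
\begin{equation*}
\rho_{\eps}^{n_{\eps}}(x,y) \geq \delta_{\eps}\, \text{Leb}(A_{\eps})^{-1}\chi_{A_{\eps}}(y) \quad \text{for all } x,y \in I,
\end{equation*}
and integrating over $y \in A_{\eps}$ shows that $\delta_{\eps} \leq 1$. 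Let $P$ denote the one-step transition operator acting on probability measures, $\mu P(A) = \int \mbM^{x}_{\eps}(A)\,\mu(dx)$. By \textbf{(A2)}, $\mu P$ has density $\int \rho_{\eps}(x,\cdot)\,\mu(dx)$ for every probability measure $\mu$, so every measure in the range of $P$ is absolutely continuous. In particular, any invariant measure, since $\mu = \mu P$, is automatically absolutely continuous.

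For existence, I would run the standard coupling/contraction argument. The minorisation gives $\|\mu P^{n_{\eps}} - \mu' P^{n_{\eps}}\|_{TV} \leq (1-\delta_{\eps})\|\mu - \mu'\|_{TV}$ for all probability measures $\mu,\mu'$. The reason is that $P^{n_{\eps}} = \delta_{\eps}\,\Pi + (1-\delta_{\eps})Q$, where $\Pi$ sends every measure to $\nu_{\eps}$ and $Q$ is again a Markov kernel with nonnegative density. Since $P$ is itself a TV-contraction, the sequence $\{\mu P^{n}\}$ is Cauchy in total variation, and the probability measures on $I$ are complete in that norm. The limit $\mu_{\eps}$ therefore exists, satisfies $\mu_{\eps} P = \mu_{\eps}$, and by the remark above has a density $p_{\eps} = \int \rho_{\eps}(x,\cdot)\,\mu_{\eps}(dx)$.

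Uniqueness follows from the same contraction. If $\mu,\mu'$ are both invariant, then
\begin{equation*}
\|\mu-\mu'\|_{TV} = \|\mu P^{n_\eps}-\mu' P^{n_\eps}\|_{TV} \leq (1-\delta_\eps)\|\mu-\mu'\|_{TV},
\end{equation*}
which forces $\mu = \mu'$. Note that this proves uniqueness among all invariant probability measures, which is stronger than uniqueness among ACIMs. The hypothesis $\rho_{\eps} = \rho_{\eps}\chi_{A_{\eps}}$ means that every invariant measure is supported in $A_{\eps}$. It is consistent with the minorisation being placed on $A_{\eps}$, but the argument does not actually need it beyond the requirement $\text{Leb}(A_{\eps}) > 0$.

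The main technical obstacle is the decomposition $P^{n_{\eps}} = \delta_{\eps}\Pi + (1-\delta_{\eps})Q$ in the degenerate case $\delta_{\eps} = 1$, where $Q$ would be undefined. In that case $P^{n_{\eps}} = \Pi$ exactly and the result is immediate. Otherwise one checks that $Q(x,dy) = (1-\delta_{\eps})^{-1}\bigl(\rho^{n_{\eps}}_{\eps}(x,y) - \delta_{\eps}\text{Leb}(A_{\eps})^{-1}\chi_{A_{\eps}}(y)\bigr)dy$ is a probability kernel, and the contraction estimate follows by the usual computation. Measurability of $\rho_{\eps}^{n}$ in $(x,y)$ is inherited from \textbf{(A2)} through the recursive definition.
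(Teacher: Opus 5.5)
Your proof is correct, but it reaches uniqueness by a different route from the paper.

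\textbf{Where the two proofs agree.} The paper also notes that \eqref{cond_unique_ACIM} gives the minorisation \eqref{minorization} with $\nu$ uniform on $A_{\eps}$. It gets absolute continuity from \textbf{(A2)}, as you do.

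\textbf{Where they differ.} The paper uses the minorisation only for existence, citing Rosenthal's Proposition~2. For uniqueness it relies on a chain of external results:
\begin{enumerate}
\item it passes to the summed lower bound \eqref{cond_unique_ACIM2};
\item it combines this with $\rho_{\eps}=\rho_{\eps}\chi_{A_{\eps}}$ to get positive Harris recurrence from Asmussen's Theorem~2;
\item it invokes the Aperiodic Ergodic Theorem of Meyn--Tweedie, taking aperiodicity from \textbf{(A4)}.
\end{enumerate}
You instead get existence and uniqueness from one Doeblin contraction, $\|\mu P^{n_{\eps}}-\mu' P^{n_{\eps}}\|_{TV}\le(1-\delta_{\eps})\|\mu-\mu'\|_{TV}$.

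\textbf{What your route gains.}
\begin{itemize}
\item It is self-contained.
\item It does not use the support hypothesis.
\item It needs no aperiodicity as an input. This matters a little, since the proposition is meant to help verify \textbf{(A4)}. In fact the uniform minorisation forces aperiodicity: $P^{n}(x,\cdot)\ge\delta_{\eps}\nu_{\eps}$ for every $n\ge n_{\eps}$. Your Cauchy argument shows this implicitly, because the full sequence $\mu P^{n}$ converges, not only the subsequence along multiples of $n_{\eps}$.
\item It gives a quantitative uniform-ergodicity bound for free: $\sup_{x}\|\delta_{x}P^{n}-\mu_{\eps}\|_{TV}\le 2(1-\delta_{\eps})^{\lfloor n/n_{\eps}\rfloor}$.
\end{itemize}

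\textbf{What the paper's route offers.} The Harris-chain machinery can handle weaker lower bounds such as \eqref{cond_unique_ACIM2}, where the number of steps may depend on $x$. As the paper is written, though, its existence step still relies on the fixed-$n_{\eps}$ minorisation, so no real generality is gained over your argument.
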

	
	\subsection{Markov process associated with randomly perturbed maps}
	
	A Markov jump process is a continuous-time stochastic process with random jumping times following an exponential distribution and with the jumping probabilities of a Markov chain. Formally, for each $\epsilon > 0$, let $(\eta_{\epsilon}(t): t \geq 0)$ be a continuous-time stochastic process defined on $(\Omega,\mcF,\mbP)$ and taking values in $I$. Define $\tau_{0} = 0$ and
	\begin{equation*}
		\tau_{n} = \inf\{t > \tau_{n-1}:\eta_{\epsilon}(t) \neq \eta_{\epsilon}(\tau_{n-1})\}
	\end{equation*}
	the time until the $n$-th jump of this process for $n \geq 1$. The process $\eta_{\epsilon}(\cdot)$ is a Markov process with embedded chain $X_{n}^{\epsilon}$ if, for instance, (a) $\tau_{n} - \tau_{n-1}$ follows a mean one exponential distribution, that is, $\mbP(\tau_{n} - \tau_{n-1} > t) = e^{-t}$ for all $n \geq 1$ and $t > 0$, (b) $\tau_{n} - \tau_{n-1}$ is independent of $\tau_{m} - \tau_{m-1}$ if $n \neq m$, and (c) for $n \geq 0$
	\begin{equation}
		\label{embed}
		\eta_{\epsilon}(\tau_{n}) = X_{n}^{\epsilon}.
	\end{equation}
	The process $\eta_{\epsilon}(\cdot)$ is the continuous-time equivalent of $X_{n}^{\epsilon}$: starting from a point $x \in I$, the process will spend a mean one exponentially distributed time at this point before jumping to another point sampled with density $\rho_{\epsilon}(x,\cdot)$. We note that $\eta_{\epsilon}(\cdot)$ has the same ACIM as $X_{n}^{\epsilon}$.
	
	The Markov process $\eta_{\epsilon}(\cdot)$, starting from a point in $I$, induces a measure on $D(\mathbb{R}_{+},I)$, the space of right-continuous functions $\boldsymbol{x}: \mathbb{R}_{+} \to I$ with left-limits, endowed with the Skorohod topology and its associated Borel $\sigma$-algebra. Observe that elements of $D(\mathbb{R}_{+},I)$ are trajectories of $\eta_{\epsilon}(\cdot)$. Let $\boldsymbol{P}^{\epsilon}_{x}, x \in I$, be the probability measure induced by the process $\eta_{\epsilon}(\cdot)$ starting from $x \in I$ on $D(\mathbb{R}_{+},I)$, and denote expectation under this measure by $\boldsymbol{E}^{\epsilon}_{x}$.
	
	The semigroup $\{\ms P_{\epsilon}(t): t \geq 0\}$ of $\eta_{\epsilon}(\cdot)$ is a collection of operators $\ms P_{\epsilon}(t): L^{\infty}(I) \mapsto L^{\infty}(I)$ satisfying $\ms P_{\epsilon}(0)F = F$ and $\ms P_{\epsilon}(t + s) = \ms P_{\epsilon}(t)\ms P_{\epsilon}(s)$ for $t,s > 0$, that are given by\footnote{We assume that the semigroup, and the generator, act on functions in $L^{\infty}(I)$ because that is enough for the purposes of this paper. Nevertheless, their supports are actually greater than $L^{\infty}(I)$.}
	\begin{align}
		\label{semigroup}
		(\ms P_{\epsilon}(t)F)(x) = \boldsymbol{E}_{x}^{\epsilon}\left[F(\eta_{\epsilon}(t))\right], & & F \in L^{\infty}(I), x \in I.
	\end{align}
	For $x \in I$, we denote by $\delta_{x} \ms P_{\epsilon}(t)$ the measure induced by $\eta_{\epsilon}(t)$ on $\mfB_{I}$ when $\eta_{\epsilon}(0) = x$, that is,
	\begin{equation}
		\label{semigroup_measure}
		(\delta_{x} \ms P_{\epsilon}(t))(A) \coloneqq (\ms P_{\epsilon}(t)\chi_{A})(x) = \boldsymbol{P}_{x}^{\epsilon}\left[\eta_{\epsilon}(t) \in A\right]
	\end{equation}
	for $A \in \mfB_{I}$.	
	
	The infinitesimal generator of $\eta_{\epsilon}(\cdot)$ is the operator $L_{\epsilon}: L^{\infty}(I) \mapsto L^{\infty}(I)$ defined as
	\begin{align}
		\label{gen_def}
		(L_{\epsilon} F)(x) \coloneqq \lim\limits_{t \to 0^{+}} \frac{(\ms P_{\epsilon}(t)F)(x) - F(x)}{t}, & & x \in I, F \in L^{\infty}(I).
	\end{align}
	A straightforward computation (see \cite[Section~4.2]{ethier2009markov}) yields
	\begin{align*}
		(L_{\epsilon} F)(x) = \int_{I} \rho_{\epsilon}(x,y) \left[F(y) - F(x)\right] \ dy = \mbM_{\epsilon}^{x}[F] - F(x)
	\end{align*}
	in which $\mbM_{\epsilon}^{x}[\cdot]$ is expectation under the probability measure $\mbM_{\epsilon}^{x}$. A Markov jump process in this context is defined by, equivalently, its transition density kernel, its semigroup or its generator. We refer to \cite[Chapter~1]{ethier2009markov} for more details about the characterisation of Markov processes.
	
	Markov processes can also be characterised as solutions to a \textit{martingale problem}. By applying the definition of generator \eqref{gen_def}, one can prove that
	\begin{align}
		\label{martin}
		M_{\epsilon}(t) = F(\eta_{\epsilon}(t)) - F(\eta_{\epsilon}(0)) - \int_{0}^{t} (L_{\epsilon} F)(\eta_{\epsilon}(s)) \ ds, & & F \in L^{\infty}(I)
	\end{align}
	is a martingale\footnote{The process $M_{\epsilon}(t)$ is a martingale if $\boldsymbol{E}_{x}^{\eps}[M_{\epsilon}(t)|\mfF_{s}^{\epsilon}] = M_{\epsilon}(s)$ for any $s < t$ and $x \in I$. In particular, in this case, $\boldsymbol{E}_{x}^{\eps}[M_{\epsilon}(t)] = 0$ since $M(0) = 0$ with probability one.} with respect to the filtration $\{\mfF_{t}^{\epsilon}\}_{t \geq 0}$ that is the augmentation of $\mfF_{t}^{0,\epsilon} = \sigma(\eta_{\epsilon}(s): s \leq t)$, the natural filtration of $D(\mathbb{R}_{+},I)$ generated by $\{\eta_{\epsilon}(s): s \leq t\}$. However, the converse is also true: under broad conditions, for instance satisfied by the jump processes we consider in this paper, a stochastic process $Y(\cdot)$ is a Markov process with generator $L$ only if
	\begin{equation}
		\label{martin_problem}
		M(t) = F(Y(t)) - F(Y(0)) - \int_{0}^{t} (L F)(Y(s)) \ ds
	\end{equation} 
	is a martingale with respect to its respective filtration $\{\mfF_{t}\}_{t \geq 0}$ for functions $F$ in a suitable space. From now on, when we say that a process analogous to \eqref{martin_problem} is a martingale, it should be implicit that it is with respect to the respective filtration $\{\mfF_{t}\}_{t \geq 0}$. 
	
	Therefore, to prove that a certain stochastic process $Y(\cdot)$ is a Markov process, it is enough to show that \eqref{martin_problem} is a martingale. Conversely, a Markov process can be characterised as the unique solution of the martingale problem \eqref{martin_problem} for some generator $L$, that is, the unique stochastic process such that \eqref{martin_problem} is a martingale. The existence and uniqueness is in the sense of the finite-dimensional distributions, that is, fixed $t_{1},\dots,t_{k}$, if $Y(\cdot)$ and $Y^{\prime}(\cdot)$ are solutions of \eqref{martin_problem}, then the random vectors $(Y(t_{1}),\dots,Y(t_{k}))$ and $(Y^{\prime}(t_{1}),\dots,Y^{\prime}(t_{k}))$ have the same distribution. This characterisation of Markov processes was pioneered by Stroock and Varadhan \cite{stroock1997multidimensional}, and more details can be found in \cite[Chapter~4]{ethier2009markov}.
	
	In view of relation \eqref{embed}, we will analyse the asymptotic behaviour of $\eta_{\epsilon}(\cdot)$ as $\epsilon \to 0$ to characterise that of the Markov chain generated by the randomly perturbed map $T_{\epsilon}$. In particular, we will describe the metastable behaviour of $\eta_{\epsilon}(\cdot)$ that implies an analogous behaviour for $X_{n}^{\epsilon}$. Following the resolvent approach to metastability, it is described via the solution of a particular martingale problem.
	
	\subsection{Metastable wells}
	\label{Sec2}
	
	The first step to describe the metastable behaviour of a Markov process is to identify \textit{metastable wells} for the process dynamic. These are subsets of $I$ such that, starting from one of them, the process will spend a long time in it before jumping to another subset, without spending a significant amount of time outside these subsets. This characterisation of metastability will be formalised in Section \ref{SecMeta}, but for now, we focus on defining the metastable wells.
	
	Recall from \textbf{(A1)} that the map $T$ has $\kappa$ disjoint invariant sets, and that we are denoting $S = \{1,\dots,\kappa\}$. For each pair $i \neq j \in S$ and $\epsilon > 0$, let $B_{i,j}^{\epsilon} \subset \bar{I_{i}}$ be a closed set such that $\partial I_{i} \cap \widebar{I_{j}} \subset B_{i,j}^{\epsilon}$ and $T^{-1}(B_{i,j}^{\epsilon}) \cap I_{i}$ is the union of a finite number of intervals if such a set exists. Otherwise, let $B_{i,j}^{\eps} = \emptyset$. 
    
    We assume that $T^{-1}(B_{i,j}^{\epsilon}) \cap T^{-1}(B_{i,j^{\prime}}^{\epsilon}) = \emptyset$ if $j \neq j^{\prime}$, what can be achieved due to \textbf{(A1.3)}, since there are at most a finite number of points in $T^{-1}(\partial I_{i}) \cap I_{i}$. Define\footnote{The sets $\Delta^{\epsilon}_{i,j}$ could be generally defined as the union of neighbourhoods of points in $T^{-1}(\partial I_{i} \cap \bar{I_{j}}) \cap I_{i}$ satisfying $T(\Delta^{\epsilon}_{i,j}) \subset B_{i,j}^{\epsilon}$. We considered the definition \eqref{holes} for simplification, but the theory also holds otherwise.}
	\begin{align}
		\label{holes}
		\Delta^{\epsilon}_{i,j} \coloneqq T^{-1}\left(B_{i,j}^{\epsilon}\right) \cap I_{i}
	\end{align}
	as the points in $I_{i}$ such that $T(x)$ is in the neighbourhood $B_{i,j}^{\epsilon}$ of $I_{j}$. We call $\Delta_{i,j}^{\eps}$ the hole from $I_{i}$ to $I_{j}$. For each $i \in S$, due to \textbf{(A1.2)} and \textbf{(A1.3)}, $\Delta^{\epsilon}_{i,j} \neq \emptyset$ for at least one $j \in S$, but $\Delta^{\epsilon}_{i,j^{\prime}}$ may be empty for other values $j^{\prime} \in S$. We define the $i$-th hole as
	\begin{align*}
		\Delta^{\epsilon}_{i} \coloneqq \bigcup\limits_{\substack{j \in S \\ j \neq i}} \ \Delta^{\epsilon}_{i,j},
	\end{align*} 
	which is a non-empty union of a finite number of intervals. Finally, we set
	\begin{equation*}
		\Delta^{\epsilon} \coloneqq \bigcup_{i \in S} \Delta^{\epsilon}_{i}.
	\end{equation*}
    We assume that the sets $B_{i,j}^{\eps}$ are chosen in a way such that
    \begin{align}
        \label{lim_Leb_holes}
        \lim\limits_{\eps \to 0} \text{Leb}(\Delta^{\eps}) = 0.
    \end{align}
	
	Due to the sub-Gaussian assumption \textbf{(A3)}, as $\epsilon \to 0$, with increasing probability, starting from $I_{i}$ the process $\eta_{\epsilon}(\cdot)$ will reach $I_{j}$ in one jump only if it has started in $\Delta^{\epsilon}_{i,j}$ for a suitable choice of $B_{i,j}^{\epsilon}$. In other words, this is the set where, with high probability, the process escapes from $I_{i}$ to $I_{j}$. 
	
	For each $i \in S$, define the \textit{metastable well}
	\begin{equation*}
		\mcE^{\epsilon}_{i} \coloneqq I_{i}\setminus\Delta^{\epsilon}_{i} \, ,
	\end{equation*}
	which is also the union of a finite number of intervals, and denote
	\begin{align}
		\label{cwells}
		\mcE^{\epsilon} \coloneqq \bigcup_{i \in S} \mcE^{\epsilon}_{i} & & \check{\mcE}^{\epsilon}_{i} \coloneqq \bigcup_{\substack{j \in S \\ j \neq i}} \mcE^{\epsilon}_{j}
	\end{align}
	An illustration of the definition of holes and metastable wells is presented in Figure \ref{f1}. We assume that the unique ACIM $\mu_{\epsilon}$ of the process $\eta_{\epsilon}$ does not concentrate on $\Delta^{\epsilon}$ in the following sense:
	\begin{itemize}
		\item[\textbf{(A5)}] It holds
		\begin{align}
			\label{cm}
			\min_{i \in S} \lim\limits_{\epsilon \to 0} \mu_{\epsilon}(\mcE^{\epsilon}_{i}) > 0 & & \text{ and } & & \lim\limits_{\epsilon \to 0} \mu_{\epsilon}(\Delta^{\epsilon}) = 0.
		\end{align}
	\end{itemize}
	
	We note that \textbf{(A5)} should be taken into consideration when choosing the collection of sets $\{B_{i,j}^{\epsilon}: \epsilon > 0\}$ in the definition of $\Delta_{i,j}^{\epsilon}$ (cf. \eqref{holes}).

    \begin{figure}[ht]
		\centering
		\begin{tikzpicture}[scale=8]
			\tikzstyle{hs} = [circle,draw=black, rounded corners,minimum width=3em, vertex distance=2.5cm, line width=1pt]
			\tikzstyle{hs2} = [circle,draw=black,dashed, rounded corners,minimum width=3em, vertex distance=2.5cm, line width=1pt]
			
			\node at (0,-0.04) {$0$};
			\node at (1,-0.04) {$1$};
            \node at (0.5,-0.05) {$\frac{1}{2}$};
			\node at (0.25,-0.05) {$\frac{1}{4}$};
			\node at (0.75,-0.05) {$\frac{3}{4}$};
			
			\node at (-0.03,1) {$1$};
            \node at (-0.075,0.95) {$1-b$};
            \node at (-0.03,0.05) {$b$};
			\node at (-0.03,0.5) {$\frac{1}{2}$};
			\node at (-0.03,0.25) {$\frac{1}{4}$};
			\node at (-0.03,0.75) {$\frac{3}{4}$};
			
			\node at (0.125,0.03) {\color{red}$\Delta^{\epsilon}_{1,2}$};
			\node at (0.625,0.03) {\color{orange}$\Delta^{\epsilon}_{2,1}$};
			\node at (0.7,-0.05) {\color{violet}$\mcE^{\epsilon}_{2}$};
			\node at (0.2,-0.05) {\color{teal}$\mcE^{\epsilon}_{1}$};
			
			\begin{scope}[line width=0.5pt]
				\draw[->] (0,0) to (1.05,0);
				\draw[->] (0,0) to (0,1.05);
				
				\draw[-,blue] (0,0.05) to (0.125,0.5);
                \draw[-,blue] (0.125,0.5) to (0.25,0.05);
                \draw[-,blue] (0.25,0.05) to (0.375,0.5);
				\draw[-,blue] (0.375,0.5) to (0.5,0.05);
				
				\draw[-,blue] (0.5,0.95) to (0.6125,0.5);
                \draw[-,blue] (0.6125,0.5) to (0.75,0.95);
                \draw[-,blue] (0.75,0.95) to (0.875,0.5);
				\draw[-,blue] (0.875,0.5) to (1,0.95);
				
				\draw[-] (1,-0.01) to (1,0.01);
				\draw[-] (0.5,-0.01) to (0.5,0.01);
				\draw[-] (0.25,-0.01) to (0.25,0.01);
				\draw[-] (0.75,-0.01) to (0.75,0.01);
				
				\draw[-] (-0.01,1) to (0.01,1);
                \draw[-] (-0.01,0.95) to (0.01,0.95);
				\draw[-] (-0.01,0.5) to (0.01,0.5);
				\draw[-] (-0.01,0.25) to (0.01,0.25);
                \draw[-] (-0.01,0.05) to (0.01,0.05);
				\draw[-] (-0.01,0.75) to (0.01,0.75);
				
				\draw[-,teal,line width=2pt] (0,0) to (0.5,0);
				\draw[-,violet,line width=2pt] (0.5,0) to (1,0);
				
                \draw[-,red,line width=2pt] (0.125-0.025,0) to (0.125+0.025,0);
                \draw[-,red,line width=2pt] (0.375-0.025,0) to (0.375+0.025,0);
				\draw[-,orange,line width=2pt] (0.615-0.025,0) to (0.615+0.025,0);
                \draw[-,orange,line width=2pt] (0.875-0.025,0) to (0.875+0.025,0);
				
				\draw[-,dashed,opacity = 0.5] (0,0.5) to (1,0.5);
				\draw[-,dashed,opacity = 0.5] (0.5,0) to (0.5,1);
				\draw[-,dashed,opacity = 0.5] (0,0) to (1,1);
			\end{scope}
			\end{tikzpicture}
		\caption{Example of a map $T$ that satisfies \textbf{(A1)} with an illustration of the holes and metastable wells.} \label{f1}
	\end{figure}
	
	\subsection{Speeded-up and order processes}
	
	The metastable behaviour of $\eta_{\epsilon}(\cdot)$ is described through the visits to the metastable wells of a speeded-up version of it, as follows.
	
	For a sequence $\{\beta_{\epsilon}: \epsilon > 0\}$ with $\liminf_{\epsilon \to 0} \beta_{\epsilon} = +\infty$, to be determined later, the process $\eta_{\epsilon}(\cdot)$ speeded-up by $\beta_{\epsilon}$ is defined as 
	\begin{equation*}
		\xi_{\epsilon}(t) \coloneqq \eta_{\epsilon}(\beta_{\epsilon}t),
	\end{equation*}
	that is the Markov process with generator
	\begin{align*}
		(\mathcal{L}_{\epsilon}F)(x) \coloneqq \beta_{\epsilon}(L_{\epsilon}F)(x) & & F \in L^{\infty}(I), x \in I.
	\end{align*}
	This follows by the martingale characterisation of Markov processes, since a change of variables in the integral in \eqref{martin} implies that
	\begin{equation}
		\label{martin_speed}
		M_{\epsilon}^{\prime}(t) = F(\eta_{\epsilon}(\beta_{\epsilon}t)) - F(\eta_{\epsilon}(0)) - \beta_{\epsilon} \int_{0}^{t} (L_{\epsilon} F)(\eta_{\epsilon}(\beta_{\epsilon}s)) \ ds
	\end{equation}
	is a martingale.
	
	The visits of the speeded-up process to the metastable wells are described by the order process. For $A \in \mfB_{I}$, denote by $T^{A}(t)$ the total time the process $\xi_{\epsilon}(\cdot)$ spends in $A$ in the time-interval $[0,t]$:
	\begin{linenomath}
		\begin{equation}
			\label{time_spent}
			T^{A}(t) = \int_{0}^{t} \chi_{A}(\xi_{\epsilon}(s)) \ ds.
		\end{equation}
	\end{linenomath}
	Denote by $S^{A}(t)$ the generalised inverse of $T^{A}(t)$:
	\begin{linenomath}
		\begin{equation}
			\label{gen_inverse}
			S^{A}(t) = \sup\{s \geq 0: T^{A}(s) \leq t\},
		\end{equation}
	\end{linenomath}
	that is, how long it takes for the process $\xi_{\epsilon}(\cdot)$ to spend an amount of time $t$ in $A$.
	
	The trace of $\xi_{\epsilon}(\cdot)$ on $A$, denoted by $(\xi_{\epsilon}^{A}(t): t \geq 0)$, is defined by
	\begin{linenomath}
		\begin{equation}
			\label{trace_process}
			\xi_{\epsilon}^{A}(t) = \xi_{\epsilon}(S^{A}(t)),
		\end{equation}
	\end{linenomath}
	that is a $A$-valued Markov process, obtained by stopping following the process $\xi_{\epsilon}(\cdot)$ when it visits $A^{c}$ and restarting when it reaches $A$ again.
	
	Recall the definition of $\mcE^{\epsilon}$ (cf. \eqref{cwells}) and that $S = \{1,\dots,\kappa\}$. Let $\Psi: \mcE^{\epsilon} \to S$ be the projection of $\mcE^{\epsilon}$ on $S$ given by
	\begin{linenomath}
		\begin{equation*}
			\Psi(x) = \sum_{i \in S} i \ \chi_{\mcE^{\epsilon}_{i}}(x)
		\end{equation*}
	\end{linenomath}
	which equals the index of the metastable well $\mcE^{\epsilon}_{i}$ that contains $x \in \mcE^{\epsilon}$. The order process $(Y_{\epsilon}(t): t \geq 0)$ is defined as
	\begin{linenomath}
		\begin{align*}
			Y_{\epsilon}(t) \coloneqq \Psi(\xi_{\epsilon}^{\mcE^{\epsilon}}(t)), 
		\end{align*}
	\end{linenomath}
	and represents the index of the set $\mcE^{\epsilon}_{i}$ that the trace process $\xi_{\epsilon}^{\mcE^{\epsilon}}(\cdot)$ is at each time. Observe that the stochastic process $Y_{\epsilon}(\cdot)$, which represents the evolution of the visits of the trace process to the metastable wells, is \textit{not} a Markov process. 
	
	To easy notation, we also denote $\boldsymbol{P}^{\epsilon}_{x}, x \in I$, for the probability measure induced on $D(\mathbb{R}_{+},I)$ by the process $\xi_{\epsilon}(\cdot)$ starting from $x \in I$, with expectation with respect to it also denoted by $\boldsymbol{E}_{x}^{\epsilon}$. Let $\mbQ_{x}^{\epsilon}, x \in I,$ be the probability measure induced by measure $\boldsymbol{P}_{x}^{\epsilon}$ and the order process $Y_{\epsilon}(\cdot)$ on $D(\mathbb{R}_{+},S)$, the space of right-continuous functions $\boldsymbol{x}: \mathbb{R}_{+} \to S$ with left-limits, endowed with the Skorohod topology and its associated Borel $\sigma$-algebra. Elements of $D(\mathbb{R}_{+},S)$ are trajectories of $Y_{\epsilon}(\cdot)$.
		
	From now on, we consider only the speeded-up process $\xi_{\epsilon}(\cdot)$ and the associated order process. The metastability of $\xi_{\epsilon}(\cdot)$ is formally defined in terms of the limit when $\epsilon \to 0$ of $\mbQ_{x}^{\epsilon}, x \in \mcE^{\epsilon},$ and of the expected time the process spends in the hole $\Delta^{\epsilon}$.
	
	\subsection{Metastability}
	\label{SecMeta}
	
	Let $\mathcal{L}$ be the generator of a $S$-valued irreducible Markov process given by
	\begin{linenomath}
		\begin{equation}
			\label{gen_limit}
			(\mathcal{L}f)(i) = \sum\limits_{j \in S} \theta(i,j) [f(j) - f(i)], i \in S,		
		\end{equation}
	\end{linenomath}
	for $f: S \to \mathbb{R}$ and jump rates $\theta: S^{2} \to \mathbb{R}_{+}$ such that $\theta(i,i) = 0$. This is the continuous-time equivalent of a Markov chain in $S$ with transition probabilities given by
	\begin{equation*}
		P_{\theta}(i,j) = \frac{\theta(i,j)}{\sum_{k \in S} \theta(i,k)}.
	\end{equation*}
	In particular, starting from $i \in S$, this process will spend a random time exponentially distributed with rate $\sum_{k \in S} \theta(i,k)$ in this index, and then jump to a $j \in S$ sampled with probability function $P_{\theta}(i,\cdot)$.
	
	The Markov process in $S$ with generator $\mcL$ will represent the asymptotic dynamic of the visits to the metastable wells $\mcE^{\epsilon}_{i}, i \in S$, when $\epsilon \to 0$. Denote by $\mbQ_{i}^{\mathcal{L}}, i \in S,$ the probability measure on $D(\mathbb{R}_{+},S)$ induced by the process with generator $\mathcal{L}$ starting from $i$.
	
	The definition of metastability relies on two conditions.  The first one states that the sequence of measures $\{\mbQ_{x^{\epsilon}}^{\epsilon}\}_{\eps > 0}$ converges to $\mbQ_{i}^{\mathcal{L}}$, when $x^{\epsilon} \in \mcE^{\epsilon}_{i}$ for all $\epsilon > 0$. This means that the dynamic of the visits of the trace process to the metastable wells is asymptotically described by a Markov process. In particular, the condition $\mathfrak{C}_{\mathcal{L}}$ below implies that, as $\epsilon \to 0$, the dynamic of $Y_{\epsilon}(\cdot)$ can be approximated by that of a Markov process in $S$ with rates $\theta(i,j)$.
	
	\vspace{0.25cm}
	
	\noindent \textbf{Condition} \textit{$\mathfrak{C}_{\mathcal{L}}$. For all $i \in S$ and sequence $\{x^{\epsilon}\}_{\epsilon > 0}$, such that $x^{\epsilon} \in \mcE^{\epsilon}_{i}$ for all $\epsilon > 0$, the sequence of laws $\{\mbQ_{x^{\epsilon}}^{\epsilon}\}_{\epsilon > 0}$ converges to $\mbQ_{i}^{\mathcal{L}}$ as $\epsilon \to 0$.}
	
	\vspace{0.25cm}
	
	The second condition states that, in the limit when $\epsilon \rightarrow 0$, the process $\xi_{\epsilon}(\cdot)$ spends a negligible amount of time in $\Delta^{\epsilon}$ on each finite time interval when starting from a point in a metastable well.
	
	\vspace{0.25cm}
	
	\noindent \textbf{Condition} \textit{$\mathfrak{D}$. For all $t > 0$,}
	\begin{linenomath}
		\begin{equation*}
			\lim\limits_{\epsilon \to 0} \max_{i \in S} \sup\limits_{x \in \mcE^{\epsilon}_{i}} \boldsymbol{E}_{x}^{\epsilon} \left[\int_{0}^{t} \chi_{\Delta^{\epsilon}}(\xi_{\epsilon}(s)) \ ds\right] = 0.
		\end{equation*}
	\end{linenomath}
	
	\vspace{0.25cm}
	
	A process is metastable if both conditions are satisfied. This definition of metastability is due to \cite{beltran2010tunneling}.
	
	\begin{definition}
		The process $\xi_{\epsilon}(\cdot)$ is $\mathcal{L}$-metastable if conditions $\mathfrak{C}_{\mathcal{L}}$ and $\mathfrak{D}$ hold.
	\end{definition}
	
	Informally, the process $\xi_{\epsilon}(\cdot)$ is metastable if, when $\epsilon \to 0$, starting from a point in $\mcE^{\epsilon}$, it spends a negligible time in $\Delta^{\epsilon}$ (condition $\mathfrak{D}$) and the dynamic determining its visits to the metastable wells can be approximated by a Markov process in $S$ (condition $\mathfrak{C}_{\mathcal{L}}$). In particular, $\mathfrak{C}_{\mathcal{L}}$ implies that, when $\epsilon \to 0$, the time it takes for the original process $\eta_{\eps}(\cdot)$ to jump to the metastable well $\mcE_{j}^{\eps}$ starting from a point in $\mcE_{i}^{\eps}$ can be approximated by an exponential distribution with rate $\beta_{\epsilon}\theta_{\epsilon}(i,j)$.

    \begin{remark}
        We note that assumption \textbf{(A1.4)} allows all metastable sets to be visited in a same timescale starting from any of them by adding a suitable sub-Gaussian noise. Without this assumption, there is no hope for $\mcL$ to be irreducible under assumption \textbf{(A3)}.
    \end{remark}
	
	\subsection{Resolvent approach to metastability}
	\label{sec_resolvent}
	
	Due to the characterisation of Markov processes as solutions of martingale problems, metastability can be equivalently defined in terms of the solution of a resolvent equation associated with the process generator $\mathcal{L}_{\epsilon}$. 
	
	Fix a function $g: S \to \mathbb{R}$, and let $G \coloneqq G_{\epsilon}: I \to \mathbb{R}$ be given by
	\begin{linenomath}
		\begin{equation*}
			G(x) = \sum_{i \in S} g(i)\chi_{\mcE^{\epsilon}_{i}}(x)
		\end{equation*}
	\end{linenomath}
	for $x \in I$, that is the function that equals $g(i)$ in $\mcE^{\epsilon}_{i}, i \in S,$ and zero in $\Delta^{\epsilon}$. For $\lambda > 0$, denote by $F_{\epsilon} \coloneqq F_{\epsilon}^{\lambda,g}$ the unique solution of the resolvent equation
	\begin{linenomath}
		\begin{equation}
			\label{resolvent_eq}
			(\lambda - \mathcal{L}_{\epsilon}) F_{\epsilon} = G.
		\end{equation}
	\end{linenomath}
	It is well known (see \cite[Proposition~1.2.1]{ethier2009markov}) that the solution of \eqref{resolvent_eq} is given by the resolvent
	\begin{linenomath}
		\begin{equation}
			\label{formula_sol_reseq}
			F_{\epsilon}(x) = \boldsymbol{E}_{x}^{\epsilon} \left[\int_{0}^{\infty} e^{-\lambda t} G(\xi_{\epsilon}(t)) \ dt\right], \ x \in I.
		\end{equation}
	\end{linenomath}
	In \cite{landim2021resolvent} it is proved that the following condition is equivalent to $\mathcal{L}$-metastability.
	
	\vspace{0.25cm}
	
	\noindent \textbf{Condition} \textit{$\mathfrak{R}_{\mathcal{L}}$. For all $\lambda > 0$ and $g: S \to \mathbb{R}$, the unique solution $F_{\epsilon}$ of the resolvent equation \eqref{resolvent_eq} is asymptotically constant in each set $\mcE^{\epsilon}_{i},i \in S,$ and
		\begin{linenomath}
			\begin{equation}
				\label{sol_constant}
				\lim\limits_{\epsilon \to 0} \sup\limits_{x \in \mcE^{\epsilon}_{i}} \left|F_{\epsilon}(x) - f(i)\right| = 0, \ i \in S,
			\end{equation}
		\end{linenomath} 
		where $f: S \to \mathbb{R}$ is the unique solution of the reduced resolvent equation
		\begin{linenomath}
			\begin{equation*}
				(\lambda - \mathcal{L})f = g.
			\end{equation*}
	\end{linenomath}}
	
	\vspace{0.25cm}
	
	The main result of \cite{landim2021resolvent} is the following.
	
	\begin{theorem}
		\label{theorem_resolvent}
		A Markov process $\xi_{\epsilon}(\cdot)$ is $\mathcal{L}$-metastable if, and only if, the condition $\mathfrak{R}_{\mathcal{L}}$ is fulfilled.
	\end{theorem}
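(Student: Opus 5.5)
We plan to adapt the argument of \cite{landim2021resolvent} to the present uncountable setting, treating the two implications separately. Throughout we work with the resolvent representation \eqref{formula_sol_reseq} and the martingale characterisation \eqref{martin_problem}.

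\emph{Metastability implies $\mathfrak{R}_{\mathcal{L}}$.} Fix $\lambda>0$, $g$, $i\in S$ and $x^\epsilon\in\mcE^\epsilon_i$. Write
\[
F_\epsilon(x^\epsilon)=\boldsymbol{E}^\epsilon_{x^\epsilon}\Bigl[\int_0^\infty e^{-\lambda t}G(\xi_\epsilon(t))\,dt\Bigr].
\]
Since $G$ vanishes on $\Delta^\epsilon$, we have $G(\xi_\epsilon(t))=g(\Psi(\xi_\epsilon(t)))\chi_{\mcE^\epsilon}(\xi_\epsilon(t))$. The change of variables $s=T^{\mcE^\epsilon}(t)$ then gives
\[
\int_0^\infty e^{-\lambda t}G(\xi_\epsilon(t))\,dt=\int_0^\infty e^{-\lambda S^{\mcE^\epsilon}(s)}g(Y_\epsilon(s))\,ds .
\]
We next use $S^{\mcE^\epsilon}(s)-s=T^{\Delta^\epsilon}(S^{\mcE^\epsilon}(s))$. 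Condition $\mathfrak{D}$, together with the Markov property and the uniform bound over starting points, gives that $T^{\Delta^\epsilon}(t)\to0$ in probability for every fixed $t$. Truncating at a large time $t_0$ (the tail is at most $\|g\|_\infty e^{-\lambda t_0}/\lambda$), we get $F_\epsilon(x^\epsilon)=\mbQ^\epsilon_{x^\epsilon}\bigl[\int_0^\infty e^{-\lambda s}g(Y(s))\,ds\bigr]+o(1)$. The functional $\boldsymbol{x}\mapsto\int_0^\infty e^{-\lambda s}g(\boldsymbol{x}(s))\,ds$ is bounded and continuous in the Skorohod topology on $D(\mathbb{R}_+,S)$. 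Hence by $\mathfrak{C}_{\mathcal{L}}$ the expectation converges to $\mbQ^{\mathcal{L}}_i[\cdots]=((\lambda-\mathcal{L})^{-1}g)(i)=f(i)$. Since this holds for every sequence $x^\epsilon\in\mcE^\epsilon_i$, the uniform statement \eqref{sol_constant} follows.

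\emph{$\mathfrak{R}_{\mathcal{L}}$ implies metastability.} We first obtain $\mathfrak{D}$. Take $g\equiv1$, so that $f\equiv1/\lambda$ and $1/\lambda-F_\epsilon=\boldsymbol{E}_x\int_0^\infty e^{-\lambda t}\chi_{\Delta^\epsilon}(\xi_\epsilon(t))\,dt$. Using $e^{-\lambda t}\ge e^{-\lambda t_0}$ on $[0,t_0]$, condition \eqref{sol_constant} yields $\mathfrak{D}$ uniformly in $x\in\mcE^\epsilon$.

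For $\mathfrak{C}_{\mathcal{L}}$ we proceed in three steps. First, we prove tightness of $\{\mbQ^\epsilon_{x^\epsilon}\}$ in $D(\mathbb{R}_+,S)$ via Aldous' criterion: we need that the probability of two jumps of $Y_\epsilon$ within time $\delta$ after a stopping time vanishes. By the strong Markov property of the trace process, this reduces to bounding $\sup_{x\in\mcE^\epsilon_i}\boldsymbol{P}_x[Y_\epsilon$ leaves $i$ before $\delta]$. We bound this with $g=\chi_{\{i\}}$: the resolvent is close to $f(i)$, which is close to $1/\lambda$ for $\lambda$ large relative to $1/\delta$, and this forces the process to remain in $\mcE^\epsilon_i$ for most of the initial time window.

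Second, we identify any limit point as the solution of the martingale problem for $\mathcal{L}$. For $g$ fixed, set $f=(\lambda-\mathcal{L})^{-1}g$. Applying \eqref{martin_speed} to $F_\epsilon$, the process
\[
F_\epsilon(\xi_\epsilon(t))-F_\epsilon(\xi_\epsilon(0))-\int_0^t(\lambda F_\epsilon-G)(\xi_\epsilon(s))\,ds
\]
is a martingale. We time-change it by $S^{\mcE^\epsilon}$ (optional stopping) and replace $F_\epsilon$ by $f\circ\Psi$ on $\mcE^\epsilon$ using \eqref{sol_constant}. Here $F_\epsilon$ is bounded by $\|g\|_\infty/\lambda$ and the time spent in $\Delta^\epsilon$ is negligible by $\mathfrak{D}$. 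The result is that $f(Y(t))-f(Y(0))-\int_0^t(\lambda f-g)(Y(s))\,ds=f(Y(t))-f(Y(0))-\int_0^t(\mathcal{L}f)(Y(s))\,ds$ is a martingale in the limit. Since $g\mapsto f$ is onto $\mathbb{R}^S$, the limit solves the martingale problem for every function on $S$.

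Third, uniqueness for the finite-state martingale problem identifies the limit as $\mbQ^{\mathcal{L}}_i$.

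The main obstacle is the second step. The trouble is that $F_\epsilon(\xi_\epsilon(S^{\mcE^\epsilon}(t)))$ is controlled only on $\mcE^\epsilon$; the process can sit at a point of $\Delta^\epsilon$ at a time $t$ of the original clock, but the trace evaluation avoids this. The integral terms must also be shown to differ by $o(1)$ in $L^1$. This requires the uniform-in-starting-point estimates, and in the uncountable state space these must be taken as suprema rather than maxima. This is exactly why both \eqref{sol_constant} and $\mathfrak{D}$ are stated with $\sup_x$. The remaining measurability issues in $D(\mathbb{R}_+,I)$ we will handle by the standard augmentation of filtrations.
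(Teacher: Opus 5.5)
Your necessity argument (resolvent written along the trace process, $S^{\mcE^\epsilon}(s)-s\to0$ in probability from $\mathfrak{D}$, continuity of $\boldsymbol{x}\mapsto\int_0^\infty e^{-\lambda s}g(\boldsymbol{x}(s))\,ds$ on $D(\mathbb{R}_+,S)$) is sound. So is your derivation of $\mathfrak{D}$ from $\mathfrak{R}_{\mathcal{L}}$. (The paper takes this theorem from \cite{landim2021resolvent} and only sketches sufficiency.)

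\textbf{The gap is in your Step 2.} You time-change the \emph{undiscounted} Dynkin martingale $F_\epsilon(\xi_\epsilon(t))-F_\epsilon(\xi_\epsilon(0))-\int_0^t(\lambda F_\epsilon-G)(\xi_\epsilon(s))\,ds$. On $\Delta^\epsilon$ you have $\lambda F_\epsilon-G=\lambda F_\epsilon\neq0$, so the time change leaves the remainder $R_\epsilon(t)=\int_0^{S^{\mcE^\epsilon}(t)}\lambda F_\epsilon(\xi_\epsilon(s))\chi_{\Delta^\epsilon}(\xi_\epsilon(s))\,ds$. Its only control is $|R_\epsilon(t)|\le\|g\|_\infty\,(S^{\mcE^\epsilon}(t)-t)$, and passing the martingale identity to the limit needs this to vanish in $L^1$, uniformly over starting points in $\mcE^\epsilon$. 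But $\mathfrak{D}$ controls the time in $\Delta^\epsilon$ only over a \emph{deterministic} window of the original clock. Via $\{S^{\mcE^\epsilon}(t)>2t\}\subset\{T^{\Delta^\epsilon}(2t)\ge t\}$ it gives $S^{\mcE^\epsilon}(t)-t\to0$ in probability, not in $L^1$. Rare but very long excursions in $\Delta^\epsilon$ (probability $p_\epsilon\to0$, duration $\gg p_\epsilon^{-1}$) are excluded neither by $\mathfrak{D}$ nor by $\mathfrak{R}_{\mathcal{L}}$, and they make $\boldsymbol{E}^{\epsilon}_{x}[S^{\mcE^\epsilon}(t)-t]$ large. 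Optional stopping of an unbounded martingale at the unbounded time $S^{\mcE^\epsilon}(t)$ also needs justification.

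The remedy, used in \cite{landim2021resolvent} and in the paper's sketch, is the discounted martingale $e^{-\lambda t}F_\epsilon(\xi_\epsilon(t))-F_\epsilon(\xi_\epsilon(0))+\int_0^te^{-\lambda s}G(\xi_\epsilon(s))\,ds$. Because $(\lambda-\mathcal{L}_\epsilon)F_\epsilon=G$ vanishes on $\Delta^\epsilon$, its time change is exactly $e^{-\lambda S^{\mcE^\epsilon}(t)}F_\epsilon(\xi^{\mcE^\epsilon}_\epsilon(t))-F_\epsilon(\xi^{\mcE^\epsilon}_\epsilon(0))+\int_0^te^{-\lambda S^{\mcE^\epsilon}(r)}G(\xi^{\mcE^\epsilon}_\epsilon(r))\,dr$. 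There is no $\Delta^\epsilon$ term and everything is bounded, so replacing $S^{\mcE^\epsilon}$ by the identity needs only convergence in probability. The resulting limit statement, holding for all $f$, is equivalent to the martingale problem for $\mathcal{L}$ by \cite[Lemma~4.3.2]{ethier2009markov}. Alternatively, you can localise by stopping when $T^{\Delta^\epsilon}$ exceeds a fixed level.

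\textbf{Your tightness step rests on a mechanism that does not suffice.} For $g=\chi_{\{i\}}$, knowing $\lambda F_\epsilon\approx\lambda f(i)\approx1$ on $\mcE^\epsilon_i$ only says that most of the $\lambda$-discounted time is spent in $\mcE^\epsilon_i$. It does not exclude a short visit to $\check{\mcE}^\epsilon_i$, which is a jump of $Y_\epsilon$ and precisely what Aldous' criterion must rule out (one jump after a stopping time, not two).

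What is missing is the strong Markov property at $H=H(\check{\mcE}^\epsilon_i)$, combined with the values of $F_\epsilon$ on the \emph{other} wells. From $F_\epsilon(x)\le\boldsymbol{E}^{\epsilon}_{x}[(1-e^{-\lambda H})/\lambda]+\boldsymbol{E}^{\epsilon}_{x}[e^{-\lambda H}]\sup_{\check{\mcE}^\epsilon_i}F_\epsilon$ you get $\boldsymbol{E}^{\epsilon}_{x}[e^{-\lambda H}]\le(1-\lambda F_\epsilon(x))/(1-\lambda\sup_{\check{\mcE}^\epsilon_i}F_\epsilon)$. Its limit is $O(1/\lambda)$, since $1-\lambda f(i)\le\theta_i/(\lambda+\theta_i)$ and $\lambda f(j)\le\theta_j/(\lambda+\theta_j)$ for $j\neq i$, where $\theta_k=\sum_l\theta(k,l)$. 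Then bound $\boldsymbol{P}^{\epsilon}_{x}[H\le2\delta]\le e^{2\lambda\delta}\,\boldsymbol{E}^{\epsilon}_{x}[e^{-\lambda H}]$ with $\lambda=1/\delta$. Taking $\lambda\gg1/\delta$, as you suggest, makes $e^{\lambda\delta}$ blow up. Finally, $\mathfrak{D}$ converts trace time $\delta$ into original time at most $2\delta$ with high probability.
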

	
	We give an overview of why the condition $\mfR_{\mcL}$ is sufficient for metastability, and refer to \cite{landim2021resolvent} for more details. To see that $\mfR_{\mcL}$ implies $\mfD$, fix $\lambda > 0$, set $g(i) = 1$ for all $i \in S$ and observe that $f(i) = 1/\lambda$ is the solution of $(\lambda - \mcL)f = g$. Since $G = \chi_{\mcE^{\epsilon}}$, by \eqref{formula_sol_reseq}, for all $x \in I$,
	\begin{equation*}
		F_{\epsilon}(x) - \frac{1}{\lambda} = \boldsymbol{E}_{x}^{\epsilon} \left[\int_{0}^{\infty} e^{-\lambda t} [G(\xi_{\epsilon}(t)) - 1] \ dt\right] = - \boldsymbol{E}_{x}^{\epsilon} \left[\int_{0}^{\infty} e^{-\lambda t} \chi_{\Delta^{\epsilon}}(\xi_{\epsilon}(t)) \ dt\right].
	\end{equation*}
	By \eqref{sol_constant}, $F_{\epsilon}(x)$ converges to $f(i) = 1/\lambda$ for $x \in \mcE^{\epsilon}_{i}$ as $\epsilon \to 0$, so
	\begin{equation*}
		\lim_{\epsilon\to 0} \sup\limits_{x \in \mcE^{\epsilon}} \boldsymbol{E}_{x}^{\epsilon} \left[\int_{0}^{\infty} e^{-\lambda t} \chi_{\Delta^{\epsilon}}(\xi_{\epsilon}(t)) \ dt\right] = 0.
	\end{equation*} 
	The condition $\mfD$ then follows since, for all $t > 0$ and $x \in I$,
	\begin{align*}
		\boldsymbol{E}_{x}^{\epsilon} \left[\int_{0}^{t} \chi_{\Delta^{\epsilon}}(\xi_{\epsilon}(s)) \ ds\right] &\leq e^{\lambda t}\boldsymbol{E}_{x}^{\epsilon} \left[\int_{0}^{t} e^{-\lambda s} \chi_{\Delta^{\epsilon}}(\xi_{\epsilon}(s)) \ ds\right] \\
		&\leq e^{\lambda t}\boldsymbol{E}_{x}^{\epsilon} \left[\int_{0}^{\infty} e^{-\lambda s} \chi_{\Delta^{\epsilon}}(\xi_{\epsilon}(s)) \ ds\right].
	\end{align*}
	
	There are some technical details to prove that $\mfR_{\mcL}$ implies $\mfC_{\mcL}$ (see Propositions 4.3 and 4.5 and Lemmata 4.2 and 4.4 in \cite{landim2021resolvent}), but the main ingredient is the martingale representation of Markov processes. 
	
	For instance, for $\lambda > 0$ fixed, it can be proved (see \cite[Lemma~4.3.2]{ethier2009markov}) that \eqref{martin_speed} is a martingale with $F_{\epsilon}$ in place of $F$  if, and only if,
	\begin{align*}
		M_{\epsilon}^{\lambda}(t) &= e^{-\lambda t} F_{\epsilon}(\xi_{\epsilon}(t)) - F_{\epsilon}(\xi_{\epsilon}(0)) + \int_{0}^{t} e^{-\lambda s} \left[(\lambda - \mcL_{\epsilon})F_{\epsilon}\right](\xi_{\epsilon}(s)) \ ds
	\end{align*}
	is a martingale. Since $F_{\epsilon}$ is the solution of \eqref{resolvent_eq} it holds
	\begin{equation*}
		M_{\epsilon}^{\lambda}(t) = e^{-\lambda t} F_{\epsilon}(\xi_{\epsilon}(t)) - F_{\epsilon}(\xi_{\epsilon}(0)) + \int_{0}^{t} e^{-\lambda s} G(\xi_{\epsilon}(s)) \chi_{\mcE^{\epsilon}}(\xi_{\epsilon}(s)) \ ds
	\end{equation*}
	in which the multiplication by $\chi_{\mcE^{\epsilon}}$ inside the integral is justifiable since $G(x) = 0$ for $x \notin \mcE^{\epsilon}$.
	
	Recall the definition of $S^{\mcE^{\epsilon}}(t)$ (cf. \eqref{gen_inverse}). Since $S^{\mcE^{\epsilon}}(t)$ is a stopping time with respect to $\mfF_{t}$ (see \cite[Lemma~7.2]{landim2019metastability}), the process $\hat{M}_{\epsilon}^{\lambda}(t) = M(S^{\mcE^{\epsilon}}(t))$ is a martingale. By the definition of the trace process (cf. \eqref{trace_process}) we conclude that:
	\begin{equation*}
		\hat{M}_{\epsilon}^{\lambda}(t) = e^{-\lambda S^{\mcE^{\epsilon}}(t)} F_{\epsilon}(\xi_{\epsilon}^{\mcE^{\epsilon}}(t)) - F_{\epsilon}(\xi_{\epsilon}^{\mcE^{\epsilon}}(0)) + \int_{0}^{S^{\mcE^{\epsilon}}(t)} \ e^{-\lambda s} \ G(\xi_{\epsilon}(s)) \ \chi_{\mcE^{\epsilon}}(\xi_{\epsilon}(s)) \ ds
	\end{equation*}
	is a martingale. Finally, with the change of variables $r = T^{\mcE^{\epsilon}}(s)$ (cf. \eqref{time_spent}) in the integral, that can be performed due to the multiplication by $\chi_{\mcE^{\epsilon}}$, we conclude that
	\begin{equation*}
		\hat{M}_{\epsilon}^{\lambda}(t) = e^{-\lambda S^{\mcE^{\epsilon}}(t)} F_{\epsilon}(\xi_{\epsilon}^{\mcE^{\epsilon}}(t)) - F_{\epsilon}(\xi_{\epsilon}^{\mcE^{\epsilon}}(0)) + \int_{0}^{t} \ e^{-\lambda S^{\mcE^{\epsilon}}(r)} \ G(\xi_{\epsilon}^{\mcE^{\epsilon}}(r)) \ dr.
	\end{equation*}
	Since $F_{\epsilon}$ satisfies \eqref{sol_constant}, by definition of $Y_{\epsilon}(\cdot), G$ and Lemma 4.4 in \cite{landim2021resolvent}, that enables replacing $S^{\mcE^{\epsilon}}(t)$ with $t$ in the limit of expectations of $\hat{M}_{\epsilon}^{\lambda}(t)$ when $\epsilon \to 0$, we conclude that
	\begin{equation*}
		\hat{M}_{\epsilon}^{\lambda}(t) = e^{-\lambda t} f(Y_{\epsilon}(t)) - f(Y_{\epsilon}(0)) + \int_{0}^{t} \ e^{-\lambda r} \ [(\lambda - \mcL)f](Y_{\epsilon}(r)) \ dr + R_{\epsilon}(t)
	\end{equation*}
	is a martingale for a residual function $R_{\epsilon}(t)$. Lemmata 4.1 and 4.4 in \cite{landim2021resolvent} imply that for all $t > 0$
	\begin{equation*}
		\lim\limits_{\epsilon \to 0} \sup_{x \in \mcE^{\epsilon}} \boldsymbol{E}_{x}^{\epsilon}\left[R_{\epsilon}(t)\right] = 0,
	\end{equation*}
	so in the limit when $\epsilon \to 0$, $\hat{M}_{\epsilon}^{\lambda}(t)$ is a martingale. By the uniqueness of solutions of martingale problems in finite sets, and the fact that the sequence of measures $\mbQ_{i}^{\epsilon}$ is tight (see \cite[Proposition~4.3]{landim2021resolvent}), we conclude that the measure induced by $Y_{\epsilon}(\cdot)$ converges to that induced by the Markov process with operator $\mcL$, that is $\mfC_{\mcL}$.
	
	That $\mfR_{\mcL}$ is necessary to metastability is proved in \cite[Section~4.2]{landim2021resolvent}.
	
	\section{Sufficient conditions for the metastability of random mixing maps}
	\label{Sec_Suf}
	
	In order to prove $\mathfrak{R}_{\mathcal{L}}$ one first shows that, for fixed $g$, the solution $F_{\epsilon}(x)$ of the resolvent equation \eqref{resolvent_eq} is asymptotically constant in each metastable well $\mcE^{\epsilon}_{i}$, and then figure out the generator $\mcL$ of the limit Markov process. We break down $\mathfrak{R}_{\mathcal{L}}$ into these two conditions.
	
	\vspace{0.25cm}
	
	\noindent \textbf{Condition} \textit{$\mathfrak{R}^{(1)}$. The solution $F_{\epsilon}$ of the resolvent equation \eqref{resolvent_eq} is asymptotically constant on each set $\mcE^{\epsilon}_{i}$:
		\begin{equation*}
			\lim\limits_{\epsilon \to 0} \sup\limits_{x,y \in \mcE^{\epsilon}_{i}} \left|F_{\epsilon}(x) - F_{\epsilon}(y)\right| = 0, \ i \in S.
	\end{equation*}}
	
	\vspace{0.25cm}
	
	Recall that $\mu_{\epsilon}$ is the unique ACIM of $\xi_{\epsilon}(\cdot)$ that has a density function $p_{\epsilon}$ (cf. \textbf{(A4)}).
	
	\vspace{0.25cm}
	
	\noindent \textbf{Condition} \textit{$\mathfrak{R}^{(2)}_{\mathcal{L}}$. Let $f_{\epsilon}: S \mapsto \mbR$ be the function given by
		\begin{equation}
			\label{f_eps}
			f_{\epsilon}(i) = \frac{1}{\mu_{\epsilon}(\mcE^{\epsilon}_{i})} \int_{\mcE^{\epsilon}_{i}} F_{\epsilon}(x) \, p_{\epsilon}(x) \ dx, \ i \in S.
		\end{equation}
		For all $i \in S$,
		\begin{equation*}
			\lim\limits_{\epsilon \to 0} f_{\epsilon}(i) = f(i)
		\end{equation*}
		where $f: S \to \mathbb{R}$ is the unique solution of the reduced resolvent equation
		\begin{equation*}
			(\lambda - \mathcal{L})f = g.
		\end{equation*}
	}
	
	\vspace{0.25cm}
	
	Clearly, conditions $\mathfrak{R}^{(1)}$ and $\mathfrak{R}^{(2)}_{\mathcal{L}}$ imply $\mathfrak{R}_{\mathcal{L}}$.
	
	\subsection{A sufficient condition for $\mathfrak{R}^{(1)}$}
	\label{SecR1}
	
	A sufficient condition for $\mathfrak{R}^{(1)}$ can be derived based on the mixing of a process that behaves as $\xi_{\epsilon}(\cdot)$ in a \textit{larger metastable well}.
	
	Denote by $\mcV_{i}^{\epsilon}$, $i \in S$, a set which contains $\mcE^{\epsilon}_{i}$, but is disjoint with the other wells: $\mcE^{\epsilon}_{i} \subset \mcV_{i}^{\epsilon}$ and $\mcV_{i}^{\epsilon} \cap \check{\mcE}^{\epsilon}_{i} = \emptyset$. For simplicity, one can think of this set as $\mcV_{i}^{\epsilon} = I_{i} = \Delta^{\epsilon}_{i} \cup \mcE^{\epsilon}_{i}$, but this is not strictly necessary.
	
	For each $A \in \mfB_{I}$ let
	\begin{equation}
		\label{htime}
		H(A) = \inf\{t \geq 0: \xi_{\epsilon}(t) \in A\}
	\end{equation}
	be the hitting time of $A$ by $\xi_{\epsilon}(\cdot)$. Let $\{\tilde{\xi}^{i}_\epsilon(t) : t \ge 0\}$ be a continuous-time Markov process on $\mcV_{i}^{\epsilon}$ such that there exists a coupling between $\tilde{\xi}^{i}_{\epsilon}(\cdot)$ and $\xi_\epsilon(\cdot)$ with
	\begin{equation}
		\label{coupling}
		\inf_{x \in \mcE^{\epsilon}_{i}} \boldsymbol{P}_{x}^{\epsilon}\left[\tilde{\xi}^{i}_{\epsilon}(t) =  \xi_\epsilon(t) \Big| t < H((\mcV_{i}^{\epsilon})^{c})\right] = 1
	\end{equation}
	for all $t > 0$. Formally, $\tilde{\xi}^{i}_{\epsilon}(\cdot)$ and $\xi_\epsilon(\cdot)$ are defined in a same probability space $(\Omega,\mcF,\mbP)$ and for each\footnote{Actually, this condition should hold for all $\omega \in \Omega$ with the possible exception of those in a $\mbP$-null subset. We omit this distinction throughout the paper without loss of generality.} $\omega \in \Omega$ it holds $\tilde{\xi}^{i}_{\epsilon}(\omega,t) = \xi_\epsilon(\omega,t)$ for all $t$ satisfying $t < H((\mcV_{i}^{\epsilon})^{c})(\omega)$. This implies that the processes $\tilde{\xi}^{i}_{\epsilon}(\cdot)$ and $\xi_\epsilon(\cdot)$ starting from a point in $\mcE^{\epsilon}_{i}$ behave exactly the same until $\xi_\epsilon(\cdot)$ reaches $(\mcV_{i}^{\epsilon})^{c}$. We assume that the process $\tilde{\xi}^{i}_{\epsilon}(\cdot)$ is induced by a random map $\tilde{T}_{\epsilon}^{i}: \Omega \times \mcV_{i}^{\epsilon} \mapsto \mcV_{i}^{\epsilon}$ that is a perturbation of the restriction of the original map $T$ to $\mcV_{i}^{\epsilon}$.
    
    Figure \ref{f3} presents an example of map $\tilde{T}_{\epsilon}^{i}$ for fixed $\omega$ when $T$ is the map in Figure \ref{f1} and $T_{\epsilon}$ is given by additive noise, where we can see that the maps $T_{\epsilon}(\omega,\cdot)$ and $\tilde{T}_{\epsilon}^{1}(\omega,\cdot)$ coincide for all $x$ such that $T_{\epsilon}(\omega,x) \in I_{1} = (0,1/2)$. If this is true with probability one over $\omega \in \Omega$, then \eqref{coupling} holds. We note in Figure \ref{f3} that $\tilde{T}_{\epsilon}^{1}$ could have been defined in any desired way for $x$ with $T_{\epsilon}(\omega,x) \notin I_{1}$, so we can define it in a way so the sufficient condition for $\mathfrak{R}^{(1)}$ to be deduced below is fulfilled.
	
	Denote by $d^i_{\rm TV}(\mu, \nu) \coloneqq d^{i,\epsilon}_{\rm TV}(\mu, \nu)$ the total variation distance between two probability measures $\mu$, $\nu$ on $\mcV_{i}^{\epsilon}$:
	\begin{equation}\label{dtv}
		d^i_{\rm TV} (\mu, \nu) \;=\; \frac{1}{2}\, \sup_J
		\Big|\, \int_{\mcV_{i}^{\epsilon}} J(x)\, d\mu(x) \,-\,
		\int_{\mcV_{i}^{\epsilon}} J(x)\, d\nu(x) \, \Big|\;,
	\end{equation}
	where the supremum is carried over all measurable functions	$J: \mcV_{i}^{\epsilon} \to \bb R$ bounded by $1$, i.e., $\lVert J \rVert_{\infty} < 1$.
	
	Assume that the process $\tilde{\xi}^{i}_{\epsilon}(\cdot)$ is ergodic. Denote by $\{\tilde{\ms{P}}^{i}_\epsilon (t) : t\ge 0\}$ its semigroup (cf. \eqref{semigroup}), by $\tilde{\mu}_{\epsilon}^{i}$ its invariant measure, and by $t^{\epsilon,i}_{\rm mix} (\varsigma)$, for $0 < \varsigma < 1$, its mixing time starting from $\mcE^{\epsilon}_{i}$:
	\begin{equation}
        \label{def_tmix}
		t^{\epsilon,i}_{\rm mix} (\varsigma) \;=\; \inf\left\{t> 0 : \sup_{x\in \mcE^{\epsilon}_{i}} d^i_{\rm TV} (\delta_x \tilde{\ms P}^{i}_\epsilon(t) \,,\,  \tilde{\mu}_{\epsilon}^{i}) \,\le\, \varsigma \,\right\}\;
	\end{equation}
	recalling that $\delta_x \tilde{\ms P}^{i}_\epsilon(t)$ is the probability measure induced on $\mfB_{\mcV_{i}^{\epsilon}}$ by $\tilde{\xi}^{i}_\epsilon(t)$ when $\tilde{\xi}^{i}_\epsilon(0) = x$ (cf. \eqref{semigroup_measure}).
	
	The following mixing property implies condition	$\mf R^{(1)}$.
	
	\vspace{0.25cm}
	
	\noindent \textit{\textbf{\bf Condition $\mf M$.} For all $i \in S$, the process $\xi_\epsilon(\cdot)$ starting from a set $\mc{E}_{i}^{\epsilon}$ cannot escape from the set $\mcV_{i}^{\epsilon}$ within a timescale $\mb h_\epsilon \ll 1$: 
		\begin{equation}
			\label{23}
			\lim_{\epsilon\to 0} \sup_{x \in \mc{E}_{i}} \mb{P}^\epsilon_{x}\,[\, H((\mcV_{i}^{\epsilon})^{c})
			\le \mb h_{\epsilon} \,]\;=\;0\;.
		\end{equation}
		Furthermore, there exists a $\varsigma_{0} > 0$ independent of $\epsilon$ such that for all $i \in S$ and $0 < \varsigma < \varsigma_{0}$ fixed,
		\begin{equation}
			\label{33}
			t^{\epsilon,i}_{\rm mix} (\varsigma) \;\le\; \mb h_\epsilon
		\end{equation}
		for all $\epsilon$ sufficiently small.}
	
	\vspace{0.25cm}
	
	Condition $\mf M$ states that the process $\tilde{\xi}^{i}_\epsilon(\cdot)$ attains the equilibrium, i.e., mixes, in a timescale that is lesser than that the process $\xi_{\epsilon}(\cdot)$ takes to leave $\mcV_{i}^{\epsilon}$. Since $\xi_{\epsilon}(t)$ and $\tilde{\xi}_{\epsilon}^{i}(t)$ are equal for $t < H((\mcV_{i}^{\epsilon})^{c})$, and $\mb h_{\epsilon} \ll H((\mcV_{i}^{\epsilon})^{c})$, the process $\xi_{\epsilon}(\cdot)$ attains a \textit{false-equilibrium} in $\mcE^{\epsilon}_{i}$ before jumping to another metastable well. If the original, not speeded-up processes were considered, then the condition $\mf M$ would mean that the restricted process has a mixing time of order lesser than $\beta_{\epsilon} \mb h_\epsilon \ll \beta_{\epsilon}$.
	
	The next proposition shows that $\mf M$ implies $\mf R^{(1)}$. This is a modification of Proposition 6.7 in \cite{landim2021resolvent} which considered the case of Markov jump processes with countable state spaces.
	
	\begin{proposition}
		\label{p03}
		If the mixing property $\mf M$ is satisfied, then the condition $\mf R^{(1)}$ holds.
	\end{proposition}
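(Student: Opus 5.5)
The argument adapts the proof of Proposition 6.7 in \cite{landim2021resolvent}. It uses the Markov property at the short time $\mb h_\epsilon$ together with the coupling \eqref{coupling}. Fix $\lambda>0$, $g$, $i\in S$, and write $F_\epsilon$ for the resolvent solution \eqref{formula_sol_reseq}. Since $|G|\le \|g\|_\infty$, we have $\|F_\epsilon\|_\infty\le \|g\|_\infty/\lambda$. For $x\in\mcE^\epsilon_i$, apply the Markov property at time $\mb h_\epsilon$:
\begin{equation*}
F_\epsilon(x)=\boldsymbol{E}^\epsilon_x\Big[\int_0^{\mb h_\epsilon}e^{-\lambda t}G(\xi_\epsilon(t))\,dt\Big]+e^{-\lambda \mb h_\epsilon}\,\boldsymbol{E}^\epsilon_x\big[F_\epsilon(\xi_\epsilon(\mb h_\epsilon))\big].
\end{equation*}
The first term is at most $\|g\|_\infty \mb h_\epsilon\to 0$, because $\mb h_\epsilon\ll 1$. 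Also $1-e^{-\lambda\mb h_\epsilon}\to0$. Hence it suffices to show that $\boldsymbol{E}^\epsilon_x[F_\epsilon(\xi_\epsilon(\mb h_\epsilon))]$ is asymptotically independent of $x\in\mcE^\epsilon_i$, uniformly in $x$.

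Next, replace $\xi_\epsilon(\mb h_\epsilon)$ by $\tilde\xi^i_\epsilon(\mb h_\epsilon)$. Under the coupling, the two processes agree on the event $\{H((\mcV_i^\epsilon)^c)>\mb h_\epsilon\}$. Therefore
\begin{equation*}
\big|\boldsymbol{E}^\epsilon_x[F_\epsilon(\xi_\epsilon(\mb h_\epsilon))]-\boldsymbol{E}^\epsilon_x[F_\epsilon(\tilde\xi^i_\epsilon(\mb h_\epsilon))]\big|\le \frac{2\|g\|_\infty}{\lambda}\,\mb P^\epsilon_x\big[H((\mcV_i^\epsilon)^c)\le \mb h_\epsilon\big],
\end{equation*}
and by \eqref{23} this tends to zero uniformly in $x\in\mcE^\epsilon_i$. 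Here $F_\epsilon$ is restricted to $\mcV_i^\epsilon$, which is the state space of $\tilde\xi^i_\epsilon$; $F_\epsilon$ is a bounded measurable function defined on all of $I$, so this restriction is harmless.

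For the mixing step, rescale $J=F_\epsilon\lambda/\|g\|_\infty$, so that $\|J\|_\infty\le 1$. Then \eqref{dtv} gives
\begin{equation*}
\big|\boldsymbol{E}_x[F_\epsilon(\tilde\xi^i_\epsilon(t))]-\tilde\mu^i_\epsilon(F_\epsilon)\big|\le \frac{2\|g\|_\infty}{\lambda}\, d^i_{\rm TV}(\delta_x\tilde{\ms P}^i_\epsilon(t),\tilde\mu^i_\epsilon).
\end{equation*}
If $\|J\|_\infty=1$ exactly, take $J$ slightly smaller; the bound is unaffected. Fix $\varsigma<\varsigma_0$. By \eqref{33}, $t^{\epsilon,i}_{\rm mix}(\varsigma)\le\mb h_\epsilon$. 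The one subtlety is that the total variation distance to equilibrium is nonincreasing in $t$ for a Markov semigroup. This holds because $\tilde\mu^i_\epsilon$ is invariant and each $\tilde{\ms P}^i_\epsilon(s)$ contracts total variation. It follows that for every $x\in\mcE^\epsilon_i$ the distance at time $\mb h_\epsilon$ is at most $\varsigma$. Otherwise I would simply apply the Markov property at $t^{\epsilon,i}_{\rm mix}(\varsigma)$ in place of $\mb h_\epsilon$, since the escape estimate \eqref{23} only improves for shorter times.

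Collecting the three estimates, for $x,y\in\mcE^\epsilon_i$ we get
\begin{equation*}
\limsup_{\epsilon\to0}\sup_{x,y\in\mcE^\epsilon_i}|F_\epsilon(x)-F_\epsilon(y)|\le \frac{4\|g\|_\infty}{\lambda}\varsigma.
\end{equation*}
Letting $\varsigma\downarrow0$ gives $\mf R^{(1)}$. I expect the main obstacle to be the coupling step rather than the estimates. The coupling \eqref{coupling} is stated as a conditional probability, and one must make sure it yields pathwise agreement on $\{t<H((\mcV_i^\epsilon)^c)\}$ at the fixed time $t=\mb h_\epsilon$. I would use the formal pathwise version stated just after \eqref{coupling}, under which the replacement above is immediate.
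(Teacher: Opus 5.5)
Your proposal is correct and follows essentially the same route as the paper. Both arguments have three steps. First, a short-time estimate $|F_\epsilon - \ms P_\epsilon(\mb h_\epsilon)F_\epsilon|\le 2\,\mb h_\epsilon\|G\|_\infty$, which you obtain from the Markov property at $\mb h_\epsilon$ and the paper from Lemma \ref{lemma6.6}. Second, the coupling replacement \eqref{in_sg}, which is controlled by \eqref{23}. Third, the total-variation bound at time $\mb h_\epsilon$, which uses monotonicity of the distance to $\tilde\mu^i_\epsilon$ together with \eqref{33} before letting $\varsigma\downarrow 0$.
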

	
	Under assumption \textbf{(A3)}, considering $\mcV_{i}^{\epsilon} = I_{i}$, condition \eqref{23} holds if the supremum over $x \in I_{i}$ of the probability of jumping from $x$ to $I_{i}^{c}$ (cf. \eqref{barrier}) decreases \textit{fast enough} to zero.
	
	\begin{proposition}
		\label{prop_not_jump}
		For $i \in S$ and $\epsilon > 0$, denote
		\begin{align}
            \label{qeps}
			q_{\epsilon} \coloneqq \max\limits_{i \in S} \sup\limits_{x \in I_{i}} \mbP\left(T_{\epsilon}(x) \in I_{i}^{c}\right).
		\end{align}
		If
		\begin{align}
			\label{hyp_not_jump}
			\lim_{\epsilon\to 0} \ q_{\epsilon} \, \beta_{\epsilon} \, \mb h_{\epsilon} = 0 & & \text{ and } & & \lim_{\epsilon\to 0} \beta_{\epsilon} \, \mb h_{\epsilon} = \infty,
		\end{align}
		then \eqref{23} holds with $\mcV_{i}^{\epsilon} = I_{i}$.
	\end{proposition}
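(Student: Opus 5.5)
The idea is to count jumps. In the time window $[0,\mb h_\epsilon]$, the speeded-up process $\xi_\epsilon(\cdot)=\eta_\epsilon(\beta_\epsilon\,\cdot)$ performs $N_\epsilon$ jumps, and $N_\epsilon$ is Poisson distributed with mean $\beta_\epsilon\mb h_\epsilon$, since the holding times of $\eta_\epsilon(\cdot)$ are i.i.d.\ mean one exponentials. Fix $x\in\mcE^\epsilon_i\subset I_i$. Before leaving $I_i$, the process coincides with the embedded chain $X^\epsilon_n$ evaluated at the jump times. Hence the event $\{H(I_i^c)\le \mb h_\epsilon\}$ is contained in the event that some step $n\le N_\epsilon$ of the chain started at $x$ leaves $I_i$ for the first time. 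Let $\sigma=\inf\{n\ge1: X^\epsilon_n\in I_i^c\}$. Because the jump times are independent of the chain, we have
\[
\mb P^\epsilon_x[H(I_i^c)\le \mb h_\epsilon] \;=\; \sum_{k\ge 0}\mbP(N_\epsilon=k)\,\mbP(\sigma\le k \mid X_0^\epsilon=x).
\]

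The key estimate is a union bound via the Markov property. On the event $\{\sigma\ge n\}$ we have $X^\epsilon_{n-1}\in I_i$, so the $n$-th step exits with conditional probability at most $\sup_{y\in I_i}\mbP(T_\epsilon(y)\in I_i^c)\le q_\epsilon$ (cf.\ \eqref{qeps}). Therefore $\mbP(\sigma\le k\mid X_0^\epsilon=x)\le \min\{1,k\,q_\epsilon\}$, uniformly in $x\in I_i$. Summing over $k$ against the Poisson weights gives
\[
\sup_{x\in\mcE^\epsilon_i}\mb P^\epsilon_x[H(I_i^c)\le \mb h_\epsilon]\le q_\epsilon\,\mbE[N_\epsilon]=q_\epsilon\,\beta_\epsilon\,\mb h_\epsilon ,
\]
which tends to zero by the first part of \eqref{hyp_not_jump}. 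Taking the maximum over the finitely many $i\in S$ yields \eqref{23} with $\mcV_i^\epsilon=I_i$.

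The second hypothesis $\beta_\epsilon\mb h_\epsilon\to\infty$ is not needed for this bound. It only guarantees that the window contains many jumps, which matters for the mixing part \eqref{33} of Condition $\mf M$, not here.

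The one delicate point is the measure-theoretic justification that the number of jumps is independent of the discrete trajectory. This holds by construction: conditions (a)--(c) define $\eta_\epsilon(\cdot)$ from independent exponential clocks and the chain $X^\epsilon_n$. A minor subtlety is that $H$ is defined with $\xi_\epsilon(t)\in A$, and at time $0$ we have $x\in I_i$, so exits happen only at jump times and the argument above covers every way of leaving $I_i$.
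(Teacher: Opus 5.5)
Your argument is correct, and it takes a genuinely different route from the paper's.

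\textbf{The paper's route.} The paper conditions on the discrete exit time $\tau=\tau(I_i^c)$ of the embedded chain. It bounds $\mathbf{P}^\epsilon_x[\tau=n]\le q_\epsilon$ for each $n$, which gives $\mathbf{P}^\epsilon_x[\tau\le 2\beta_\epsilon\mathbf{h}_\epsilon]\le 2q_\epsilon\beta_\epsilon\mathbf{h}_\epsilon$. It then treats separately the event $\{H(I_i^c)\le\mathbf{h}_\epsilon,\ \tau>2\beta_\epsilon\mathbf{h}_\epsilon\}$, where many steps are completed in a short time. For this it uses Lemma \ref{lemma_discrete_cont}, i.e.\ Chebyshev's inequality for the $\mathrm{Gamma}(n,1)$ law of $\beta_\epsilon H(I_i^c)$ given $\tau=n$. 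That step produces an extra error term $2/(\beta_\epsilon\mathbf{h}_\epsilon)$, and this term is the only reason the hypothesis $\beta_\epsilon\mathbf{h}_\epsilon\to\infty$ enters.

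\textbf{Your route.} You condition instead on the Poisson number $N_\epsilon$ of jumps in $[0,\mathbf{h}_\epsilon]$. You use the exact identity $\{H(I_i^c)\le\mathbf{h}_\epsilon\}=\{\sigma\le N_\epsilon\}$ and the independence of the clocks from the chain, and you average the union bound $\min\{1,kq_\epsilon\}$ against the Poisson weights.

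\textbf{Comparison.} Your argument is shorter and gives the sharper bound $q_\epsilon\beta_\epsilon\mathbf{h}_\epsilon$ with no concentration step. As you note, it shows that the second half of \eqref{hyp_not_jump} is superfluous for \eqref{23}; that condition is still used elsewhere, through $a_\epsilon\gg 1$, to get \eqref{33}.

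Both proofs rest on the same structural fact: the holding times are i.i.d.\ exponentials independent of the embedded chain. Jumps to the current point have probability zero by \textbf{(A2)}, so clock rings and actual state changes coincide almost surely. The paper uses this fact implicitly when it asserts the conditional Gamma law in Lemma \ref{lemma_discrete_cont}, so your reliance on it is no stronger than the paper's. What the paper's route buys is mainly that it sets up the Gamma-conditioning device it reuses later, in Proposition \ref{lemma_q_eps}.
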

	
	\subsection{Mixing time of random transformations of dynamical systems}
	
	The sufficient condition $\mf M$ may be deduced by representing the process $\tilde{\xi}^{i}_{\epsilon}(\cdot)$ as a random perturbation of a dynamical system and analysing the spectral properties of the respective transfer operator. To this end, we make the following assumption about the smoothness of the maps:
    \begin{itemize}
        \item[\textbf{(A6)}] For all $i \in S$, the maps $T|_{I_{i}}$ and $\tilde{T}_{\epsilon}^{i}(\omega,\cdot)$, for all $\omega \in \Omega$ and $\epsilon > 0$, are $C^{2}$ at all but a finite number of points in $I_{i}$, but can be extended to a $C^{2}$ function in a neighbourhood of these points.
    \end{itemize}	
	In this section, we assume that $i \in S$ is fixed and $\mcV_{i}^{\epsilon} = I_{i}$. To easy notation, we denote simply $\tilde{T}_{\epsilon} \coloneq \tilde{T}_{\epsilon}^{i}$ and $\tilde{\xi}_{\epsilon} \coloneqq \tilde{\xi}_{\epsilon}^{i}$, and omit $i$ from the superscript of operators. 
    
	We now introduce the Banach space of functions, which we will use throughout the paper. Define the total variation of $F: I_{i} \mapsto \mbR$ as
	\begin{align*}
		|F|_{TV} = \sup\left\{\sum_{\ell = 1}^{n} |F(x_{\ell}) - F(x_{\ell-1})|: n \geq 1, x_{0} \leq x_{1} \leq \cdots \leq x_{n}, x_{\ell} \in I_{i}\right\}.
	\end{align*}
	The space $BV(I_{i})$ of functions $F: I_{i} \mapsto \mbR$ with bounded variation is the Banach space with norm
	\begin{align}
		\label{BV_norm}
		\lVert F \rVert:=\lVert F \rVert_{BV} = |F|_{TV} + \lVert F \rVert_{1}.
	\end{align}
	An element of $BV(I_{i})$ is technically an equivalence class of functions that are equal in all points of $I_{i}$, except on a Lebesgue null set\footnote{To be precise, in the definition of $|F|_{TV}$ one should take the infimum over all functions in the equivalence class of $F$.}. Furthermore, 
    \begin{equation}
        \label{BV_inf}
        \lVert F \rVert_{\infty} \leq \max\{1,(\text{Leb}(I_{i}))^{-1}\} \lVert F \rVert_{BV}\, ,
    \end{equation}
    so in particular $BV(I_{i}) \subset L^{\infty}(I_{i})$. Indeed, for any $x,y \in I_{i}$, $|F|(x) - |F|(y) \leq |F|_{TV}$ and \eqref{BV_inf} follows by integrating both sides in $y \in I_{i}$ and taking the essential supremum over $x \in I_{i}$.
	
	The transfer operator $P$ of map $T$ and $\tilde{P}_{\epsilon,\omega} \coloneqq \tilde{P}^{i}_{\epsilon,\omega}$ of map $\tilde{T}_{\epsilon}(\omega,\cdot)$ for $\omega \in \Omega$ fixed act on $F \in BV(I_{i})$ as
	\begin{align}
		\label{transfer_oper}
		(PF)(x) = \sum_{y \in T^{-1}(x)} \frac{F(y)}{|T^{\prime}(y)|} & & \text{ and } & & (\tilde{P}_{\epsilon,\omega}F)(x) = \sum_{y \in \tilde{T}_{\epsilon}^{-1}(\omega,x)} \frac{F(y)}{|\tilde{T}_{\epsilon}^{\prime}(\omega,y)|}
	\end{align}
	for $x \in I_{i}$. For all $\omega \in \Omega,$ $F\in BV(I_{i})$ and $G\in L^1(I_i),$ the transfer operator satisfies
	\begin{align}
		\label{cond_transferOmega}
		\int_{I_{i}} F(x) \, G(\tilde{T}_{\epsilon}(\omega,x)) \, dx = \int_{I_{i}} (\tilde{P}_{\epsilon,\omega}F)(x) \, G(x) \, dx.
	\end{align}
	In the following we may simply write $F\in BV,$ assuming from the context the set where the total variation is computed.
	
	We denote by $\tilde{\mcL}_{\epsilon}^{i}$ the generator of $\tilde{\xi}_{\epsilon}^{i}(\cdot)$ and assume that it acts on functions $F \in BV$ as
	\begin{align}
		\label{generator_restricted}
		(\tilde{\mcL}_{\epsilon}^{i}F)(x) = \beta_{\eps} \int_{I_{i}} \tilde{\rho}_{\epsilon}(x,y)[F(y) - F(x)] \, dy
	\end{align}
	for $x \in I_{i}$ in which $\tilde{\rho}_{\epsilon}(x,\cdot) \coloneqq \tilde{\rho}_{\epsilon}^{i}(x,\cdot)$ is a probability density function for all $x \in I_{i}$. We assume that $\tilde{\rho}_{\epsilon}(x,\cdot) \in BV$ for all $x \in I_{i}$ and that the ACIM $\tilde{\mu}_{\epsilon}^{i}$ of $\tilde{\xi}_{\epsilon}(\cdot)$ has a probability density function $\tilde{p}_{\epsilon} \coloneq \tilde{p}_{\epsilon}^{i} \in BV$.
	
	Let $\tilde{\tau}_{0} = 0$ and
	\begin{equation*}
		\tilde{\tau}_{n} = \inf\{t > \tilde{\tau}_{n-1}: \tilde{\xi}_{\epsilon}(t) \neq \tilde{\xi}_{\epsilon}(\tilde{\tau}_{n-1})\}
	\end{equation*}
	for $n \geq 1$ be the jumping times of $\tilde{\xi}_{\epsilon}(\cdot)$. Define by $\tilde{X}_{n}^{\epsilon} \coloneqq \tilde{\xi}_{\epsilon}(\tilde{\tau}_{n})$ for $n \geq 0$ the embedded Markov chain of $\tilde{\xi}_{\epsilon}(\cdot)$. This is the Markov chain in $I_{i}$ with transition density $\tilde{\rho}_{\epsilon}(x,y)$. Associate to $\tilde{\rho}_{\epsilon}$ the operator $\tilde{P}_{\epsilon} \coloneqq \tilde{P}_{\epsilon}^{i}$ that acts on $F \in BV$ as
	\begin{equation*}
		(\tilde{P}_{\epsilon}F)(x) = \int_{I_{i}} F(y) \tilde{\rho}_{\epsilon}(y,x) \, dy\, ,
	\end{equation*}
	that is the adjoint of the Markov operator of $\tilde{X}_{n}^{\epsilon}$. 
	
	From \eqref{cond_transferOmega}, it follows that, for all $F \in BV$, 
	\begin{equation}
		\label{randomTrans}
		(\tilde{P}_{\epsilon}F)(x) = \int_{\Omega} (\tilde{P}_{\epsilon,\omega}F)(x) \, d\mbP(\omega)
	\end{equation}
	for almost every $x \in I_{i}$ under the Lebesgue measure. This is true since, by multiplying both sides by a bounded positive measurable function $G$ and integrating, the left-hand side of \eqref{randomTrans} can then be written as
	\begin{align}
		\label{randomTrans2} \nonumber
		\int_{I_{i}} \int_{I_{i}} G(x) F(y) \tilde{\rho}_{\epsilon}(y,x) \, dy \, dx &= \int_{\Omega} \int_{I_{i}}  G(\tilde{T}_{\epsilon}(\omega,y)) F(y) \, dy \, d\mbP(\omega)\\
		&= \int_{\Omega} \int_{I_{i}} G(x) (\tilde{P}_{\epsilon,\omega}F)(x) \, dx \, d\mbP(\omega)
	\end{align}
	in which the last equality is due to \eqref{cond_transferOmega}. The almost everywhere equality \eqref{randomTrans} follows since \eqref{randomTrans2} holds for all $G$. In particular, it follows from \eqref{randomTrans} that
	\begin{align}
		\label{norm_P_Ptilde}
		\lVert \tilde{P}_{\epsilon} F \rVert \leq \int_{\Omega} \lVert \tilde{P}_{\epsilon,\omega}F \rVert \, d\mbP(\omega)
	\end{align}
	for $F \in BV$.
	
	Fix $x \in I_{i}$ and $n \geq 1$. By definition (cf. \eqref{semigroup_measure}), $\delta_x \tilde{\ms P}^{i}_\epsilon(\tilde{\tau}_{n})$ is the measure induced on $\mfB_{I_{i}}$ by $\tilde{X}_{n}^{\epsilon}$ when $\tilde{X}_{0}^{\epsilon} = x$. The probability density function of this measure, that we denote by $\tilde{p}_{\epsilon}^{n} \coloneqq \tilde{p}_{\epsilon,x}^{n}$, satisfies
	\begin{equation}
		\label{density_rt}
		\tilde{p}^{n}_{\epsilon}(y) = (\tilde{P}_{\epsilon}^{n-1}\tilde{\rho}_{\epsilon}(x,\cdot))(y)
	\end{equation}
	for $y \in I_{i}$. Clearly, $\tilde{p}^{1}_{\epsilon}(y) = \tilde{\rho}_{\epsilon}(x,y) = (\tilde{P}_{\epsilon}^{0}\tilde{\rho}_{\epsilon}(x,\cdot))(y)$ and for $n \geq 1$
	\begin{align*}
		\tilde{p}^{n}_{\epsilon}(y) = \int_{I_{i}} \tilde{p}^{n-1}_{\epsilon}(z) \tilde{\rho}_{\epsilon}(z,y) \, dz = (\tilde{P}_{\epsilon} \, \tilde{p}^{n-1}_{\epsilon})(y)
	\end{align*}
	and the equality \eqref{density_rt} follows by induction.
	
	Recalling the definition of $d_{TV}^{i}$ in \eqref{dtv}, we conclude that for $n \geq 1$
	\begin{align}
		\label{bound_dTV_BV} \nonumber
		d_{TV}^{i}(\delta_x \tilde{\ms P}^{i}_\epsilon(\tilde{\tau}_{n}),\tilde{\mu}_{\epsilon}^{i}) &= \frac{1}{2} \sup_{J} \left|\int_{I_{i}} J(y) \,(\tilde{P}_{\epsilon}^{n-1}\tilde{\rho}_{\epsilon}(x,\cdot))(y) \, dy - \int_{I_{i}} J(y) \, \tilde{p}_{\epsilon}(y) \, dy \right|\\ \nonumber
		&\leq \lVert \tilde{P}_{\epsilon}^{n-1}\tilde{\rho}_{\epsilon}(x,\cdot) - \tilde{p}_{\epsilon} \rVert_{1}\\
		&\leq \left\lVert \tilde{P}_{\epsilon}^{n-1}\tilde{\rho}_{\epsilon}(x,\cdot) - \tilde{p}_{\epsilon} \int_{I_{i}}\tilde{\rho}_{\epsilon}(x,z) \, dz  \right\rVert.
	\end{align}
	in which the last equality follows since the BV norm is greater than the $L_{1}$ norm by definition (cf. \eqref{BV_norm}) and the integral multiplying $\tilde{p}_{\epsilon}$ equals one.

	Define, for $0 < \varsigma < 1$, the mixing time of $\{\tilde{X}_{n}^{\epsilon}: n \geq 1\}$ starting from $\mcE_{i}^{\epsilon}$ as
	\begin{equation}
        \label{def_nmix}
		n^{\epsilon,i}_{\rm mix} (\varsigma) \;=\; \min\left\{n \geq 0 : \sup_{x \in \mcE_{i}^{\epsilon}} d^i_{\rm TV} (\delta_x \tilde{\ms P}^{i}_\epsilon(\tilde{\tau}_{n}) \,,\,  \tilde{\mu}_{\epsilon}^{i}) \,\le\, \varsigma \,\right\}.
	\end{equation}	
	The next lemma relates $t^{\epsilon,i}_{\rm mix} (\varsigma)$ with $n^{\epsilon,i}_{\rm mix} (\varsigma)$, so bounds for \eqref{bound_dTV_BV} imply bounds for $t^{\epsilon,i}_{\rm mix} (\varsigma)$.
	
	\begin{lemma}
		\label{lemma_Mix_nt}
		There exists a $\varsigma_{0} > 0$, independent of $\epsilon$, such that, for all $0 < \varsigma < \varsigma_{0}$, there exists a constant $C_{\varsigma} > 0$, also independent of $\epsilon$, such that
		\begin{equation*}
			t^{\epsilon,i}_{\rm mix} (\varsigma) \leq \frac{C_{\varsigma}}{\beta_{\epsilon}} \, n^{\epsilon,i}_{\rm mix} (\varsigma)
		\end{equation*}		
        for all $i \in S$ and $\eps$ small enough.
	\end{lemma}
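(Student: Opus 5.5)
The plan is to write the continuous-time law of $\tilde{\xi}^{i}_{\epsilon}(t)$ as a Poisson mixture of the laws of the embedded chain $\tilde{X}^{\epsilon}_{n}$, and then use that the total variation distance to $\tilde{\mu}^{i}_{\epsilon}$ is non-increasing along the chain. The speeded-up process $\tilde{\xi}^{i}_{\epsilon}(\cdot)$ has generator \eqref{generator_restricted}, so it jumps at the times of a Poisson process of rate $\beta_{\epsilon}$, independent of the embedded chain. Writing $K_{\epsilon}$ for the Markov kernel with density $\tilde{\rho}_{\epsilon}$, and noting that $\delta_x K_{\epsilon}^{n}=\delta_x\tilde{\ms P}^{i}_\epsilon(\tilde{\tau}_{n})$, we get
\begin{equation*}
\delta_x \tilde{\ms P}^{i}_\epsilon(t) \;=\; \sum_{n\ge 0} e^{-\beta_{\epsilon} t}\frac{(\beta_{\epsilon} t)^{n}}{n!}\, \delta_x\tilde{\ms P}^{i}_\epsilon(\tilde{\tau}_{n}) .
\end{equation*}
By convexity of $d^{i}_{\rm TV}(\cdot,\tilde{\mu}^{i}_{\epsilon})$,
\begin{equation*}
d^{i}_{\rm TV}(\delta_x \tilde{\ms P}^{i}_\epsilon(t),\tilde{\mu}^{i}_{\epsilon}) \;\le\; \sum_{n\ge 0} \mathrm{Poi}_{\beta_\epsilon t}(n)\, d_{n}(x), \qquad d_{n}(x):=d^{i}_{\rm TV}(\delta_x\tilde{\ms P}^{i}_\epsilon(\tilde{\tau}_{n}),\tilde{\mu}^{i}_{\epsilon}),
\end{equation*}
where $\mathrm{Poi}_{a}(n)=e^{-a}a^{n}/n!$.

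\textbf{Monotonicity.} Since $\tilde{\mu}^{i}_{\epsilon}$ is invariant for $K_{\epsilon}$, we have $\delta_xK_\epsilon^{n+1}-\tilde{\mu}^{i}_{\epsilon}=(\delta_xK_\epsilon^{n}-\tilde{\mu}^{i}_{\epsilon})K_\epsilon$. A Markov kernel contracts total variation, so $n\mapsto d_{n}(x)$ is non-increasing for every fixed $x$. Set $N=n^{\epsilon,i}_{\rm mix}(\varsigma)$. Then $N\ge 1$, because $d_0(x)=1$ (point mass against an absolutely continuous measure). Splitting the Poisson sum at $kN$ gives
\begin{equation*}
\sup_{x\in\mcE^{\epsilon}_i} d^{i}_{\rm TV}(\delta_x \tilde{\ms P}^{i}_\epsilon(t),\tilde{\mu}^{i}_{\epsilon}) \;\le\; \mathbf{P}\big(\mathrm{Poi}(\beta_\epsilon t)<kN\big) + \sup_{x\in\mcE^{\epsilon}_i} d_{kN}(x).
\end{equation*}

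\textbf{Main obstacle: the tolerance.} The statement uses the same $\varsigma$ on both sides, so knowing $d_{N}\le\varsigma$ is not enough. Using only that, the right-hand side is $\mathbf P(\mathrm{Poi}<N)+\varsigma\,\mathbf P(\mathrm{Poi}\ge N)$, which exceeds $\varsigma$. I need a geometric improvement $\sup_x d_{kN}(x)\le \varsigma/2$ for a fixed $k=k(\varsigma)$.

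I would get this from the usual submultiplicativity $\bar d(m+n)\le 2\bar d(m)\,\bar d(n)$, with $\bar d$ the worst-case distance, giving $\bar d(kN)\le (2\varsigma)^{k}/2$ once $\varsigma<\varsigma_{0}\le 1/4$. Here $\varsigma_0$ is chosen independently of $\epsilon$. The difficulty is that $n^{\epsilon,i}_{\rm mix}$ takes the supremum only over starting points in $\mcE^{\epsilon}_i$, not over all of $I_i$.

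To close this gap, I would first try the two-step estimate
\begin{equation*}
d_{2N}(x)\;\le\;\int d_{N}(y)\, \big(\delta_xK_\epsilon^{N}\big)(dy).
\end{equation*}
I would then bound the mass that $\delta_xK_\epsilon^{N}$ puts on $\Delta^{\epsilon}_i$ by $\tilde{\mu}^{i}_{\epsilon}(\Delta^{\epsilon}_{i})+\varsigma$. If this stalls, the fallback is to observe that the spectral bound \eqref{bound_dTV_BV} used later is uniform in $x\in I_i$, so $\bar d$ may replace the sup over $\mcE^{\epsilon}_i$ at no cost in the applications.

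\textbf{Conclusion.} With $d_{kN}\le\varsigma/2$ in hand, take $t=C_\varsigma N/\beta_\epsilon$ with $C_\varsigma=2k$. A Chernoff bound for the Poisson variable, using $N\ge1$, gives $\mathbf P(\mathrm{Poi}(2kN)<kN)\le e^{-ckN}\le e^{-ck}$, and this is $\le\varsigma/2$ for $k$ large depending only on $\varsigma$. Hence $t^{\epsilon,i}_{\rm mix}(\varsigma)\le C_\varsigma\, n^{\epsilon,i}_{\rm mix}(\varsigma)/\beta_\epsilon$, uniformly in $i\in S$ and in $\epsilon$ small.
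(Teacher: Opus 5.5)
The gap is exactly at the step you flag, and neither of your two remedies proves the lemma as stated.

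Convexity alone cannot push the bound below the level $\varsigma$. From $d_{2N}(x)\le\int d_N(y)\,(\delta_xK_\epsilon^N)(dy)$, using $d_N\le\varsigma$ on $\mcE^{\epsilon}_{i}$, $d_N\le 1$ on $\Delta^{\epsilon}_{i}$ and $(\delta_xK_\epsilon^N)(\Delta^{\epsilon}_{i})\le\tilde{\mu}^{i}_{\epsilon}(\Delta^{\epsilon}_{i})+\varsigma$, you only get $d_{2N}(x)\le 2\varsigma+\tilde{\mu}^{i}_{\epsilon}(\Delta^{\epsilon}_{i})$. That is weaker than what you started from.

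Geometric decay needs the cancellation in $(\delta_xK_\epsilon^N-\tilde{\mu}^{i}_{\epsilon})K_\epsilon^N$. Write $\delta_xK_\epsilon^N-\tilde{\mu}^{i}_{\epsilon}=d_N(x)(\alpha-\beta)$ with $\alpha\perp\beta$ probability measures. Then $d_{2N}(x)\le d_N(x)\sup_{y,z}d^{i}_{\mathrm{TV}}(\delta_yK_\epsilon^N,\delta_zK_\epsilon^N)$, with $y,z$ in the supports of $\alpha,\beta$. But $\alpha$ may well live in $\Delta^{\epsilon}_{i}$, where $n^{\epsilon,i}_{\mathrm{mix}}$ carries no information.

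This is not a technicality. Take a chain with the following features:
\begin{itemize}
\item a piece $D\subset\Delta^{\epsilon}_{i}$ with $\tilde{\mu}^{i}_{\epsilon}(D)<\varsigma$ is emptied at a rate $\delta_\epsilon\ll\eta_\epsilon/\log(1/\eta_\epsilon)$;
\item every point of $\mcE^{\epsilon}_{i}$ is $\varsigma$-close to $\tilde{\mu}^{i}_{\epsilon}$ after one jump.
\end{itemize}
Now change $\tilde{\rho}_{\epsilon}(x_0,\cdot)$ at a single point $x_0\in\mcE^{\epsilon}_{i}$ so that it puts an excess mass $\varsigma-\eta_\epsilon$ on $D$. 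This is a Lebesgue-null change, so $\tilde{\mu}^{i}_{\epsilon}$ stays invariant. Then $n^{\epsilon,i}_{\mathrm{mix}}(\varsigma)=1$. In continuous time, however, the unjumped mass at $x_0$ adds an amount of order $e^{-\beta_\epsilon t}$ to the plateau $\varsigma-\eta_\epsilon$. Hence $\beta_\epsilon t^{\epsilon,i}_{\mathrm{mix}}(\varsigma)$ is at least of order $\log(1/\eta_\epsilon)$. So information on starting points in $\mcE^{\epsilon}_{i}$ alone cannot give the inequality at a fixed tolerance, however the contraction step is refined.

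For comparison, the paper gets past the tolerance issue by quoting $t_{\mathrm{mix}}(\varsigma)\le C_{\varsigma,\varsigma'}\,t_{\mathrm{mix}}(\varsigma')$ from \cite[Section~4.5]{levin2017markov}. That comparison is proved by exactly the submultiplicativity you invoke, and it holds for the worst case over \emph{all} initial states. So the paper tacitly makes the same substitution as your fallback.

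Your fallback is therefore the right repair. Define $n^{\epsilon,i}_{\mathrm{mix}}$ with $\sup_{x\in I_i}$; the continuous-time side may keep $\sup_{x\in\mcE^{\epsilon}_{i}}$. This costs nothing downstream, because \eqref{bound_dTV_BV} and \eqref{eq_Theorem_spectral} are uniform in $x\in I_i$.

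With that change your argument is complete, and its ending is cleaner than the paper's:
\begin{itemize}
\item the exact Poisson mixture plus convexity replaces the paper's comparison at the random time $t\wedge\tilde{\tau}_n$;
\item the $k$-fold amplification with the Chernoff bound only uses $N\ge 1$, so a fixed $\varsigma_0$ (e.g.\ $1/4$) works for every family, whereas the paper's $\varsigma_0$ depends on $\limsup_{\epsilon\to0}(2/e)^{n^{\epsilon,i}_{\mathrm{mix}}(\varsigma)}<1/4$.
\end{itemize}
You should, however, present it as a proof of this modified lemma, not of the statement as written.
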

	
	We have proved so far that to establish \eqref{33}, it is enough to properly bound \eqref{bound_dTV_BV} over $x \in \mcE_{i}^{\epsilon}$. We now show how bounds for \eqref{bound_dTV_BV} are a consequence of the spectral properties of the transfer operators $\tilde{P}_{\epsilon,\omega}$ when $T$ is strongly mixing:
	\begin{itemize}
		\item[] \textbf{(A7)} For all $i \in S$, the restriction $T|_{I_{i}}$ of $T$ to $I_{i}$ verifies the Lasota-Yorke (LY) inequality: there exists $0<\gamma<1$ and $C,D>0$ such that, for all $n \ge 1$ and $F \in BV(I_{i})$,
        \begin{equation}\label{LLYY}
        \lVert P^nF \rVert \leq C \gamma^{n} \lVert F \rVert + D \lVert F \rVert_{1}.
        \end{equation}
        Moreover $T|_{I_{i}}$  has a unique ACIM $\mu_{i}$ with density $p_{i} \in BV(I_{i})$ and it is strongly mixing, that is, there exists $0 < \lambda_{i} < 1$ and $\Lambda_{i} \geq 1$ such that for all $F \in BV(I_i)$ and $n \geq 1:$\footnote{By the compact embedding of $BV$ into $L^1$ and the inequality \eqref{LLYY}, the uniqueness and the mixing of the ACIM follow whenever $1$ is the only eigenvalue of $P$ in the unit circle.}
		\begin{equation}
			\label{sm}
			\left\lVert P^{n}F - p_{i} \int_{I_{i}} F(z) \, dz \right\rVert \leq \Lambda_{i} \lambda_{i}^{n}.
		\end{equation}
	\end{itemize}
	
	The next theorem allows to bound \eqref{bound_dTV_BV} by $\Lambda_{i} \lambda^{n-1}_{i}$ given in \eqref{sm} and is a consequence of the perturbation theorem of Keller and Liverani \cite{KL2} which asserts the spectral stability of $P$ under small perturbations.

	\begin{theorem}
		\label{main_theorem}
		If it holds for $i \in S$:
		\begin{itemize}
			\item[(a)] There exists $0 < \gamma < 1$ and $C, D > 0$ such that for all $\epsilon > 0$ and $n \geq 1$
			\begin{equation*}
				\lVert \tilde{P}_{\epsilon}^n F \rVert \leq C \gamma^{n} \lVert F \rVert + D \lVert F \rVert_{1}
			\end{equation*}
			for all $F \in BV(I_{i})$ and
			\item[(b)] 
			\begin{equation*}
				\lim\limits_{\epsilon \to 0} \sup\limits_{\lVert F \rVert \leq 1} \left|\int_{I_{i}} (\tilde{P}_{\epsilon} - P)F(x) \, dx \right| = 0,
			\end{equation*}
			then
			\begin{equation}
				\label{eq_Theorem_spectral}
				\left\lVert \tilde{P}_{\epsilon}^{n}F - \tilde{p}_{\epsilon} \int_{I_{i}}F(z) \, dz  \right\rVert \leq \Lambda_{i} \, \lambda^{n}_{i}
			\end{equation}
			for all $F \in BV(I_{i})$ in which $\lambda_{i}$ and $\Lambda_{i}$ are the constants in \eqref{sm}.
		\end{itemize}		
	\end{theorem}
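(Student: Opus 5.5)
The plan is to apply the Keller--Liverani perturbation theorem \cite{KL2} to the family $\{\tilde{P}_{\epsilon}\}_{\epsilon>0}$, seen as a perturbation of $P$ acting on $BV(I_{i})$. I use the norm $\lVert\cdot\rVert = \lVert\cdot\rVert_{BV}$ as the strong norm and $\lVert\cdot\rVert_{1}$ as the weak norm. The KL hypotheses are:
\begin{itemize}
\item[(i)] uniform Lasota--Yorke inequalities for $P$ and all $\tilde{P}_{\epsilon}$ with the same constants;
\item[(ii)] uniform bounds on $\lVert \tilde{P}_{\epsilon}^{n}\rVert_{1}$;
\item[(iii)] smallness of the triple norm $|||\tilde{P}_{\epsilon} - P||| := \sup_{\lVert F\rVert\le 1}\lVert(\tilde{P}_{\epsilon}-P)F\rVert_{1}$.
\end{itemize}
Item (i) is exactly (a) together with \eqref{LLYY} in \textbf{(A7)}, after enlarging $C,D$ and $\gamma$ to common values. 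Item (ii) is automatic, since $\tilde{P}_{\epsilon}$ is a Markov (positive, integral preserving) operator, so $\lVert \tilde{P}_{\epsilon}F\rVert_{1}\le \lVert F\rVert_{1}$.

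The main obstacle is (iii). Hypothesis (b) controls only $|\int (\tilde{P}_{\epsilon}-P)F|$, not the $L^{1}$ norm, so I must upgrade it. Since both operators preserve integrals, $\int(\tilde{P}_{\epsilon}-P)F\,dx=0$ for every $F$, so (b) as written carries little information by itself. I would therefore interpret it through duality: write $\lVert(\tilde{P}_{\epsilon}-P)F\rVert_{1}=\sup_{\lVert J\rVert_\infty\le1}\int J\,(\tilde{P}_{\epsilon}-P)F$. I would then try to use \eqref{randomTrans} and \eqref{cond_transferOmega} to move the difference onto the maps, which gives $\int F\,(J\circ\tilde{T}_{\epsilon}(\omega,\cdot)-J\circ T)\,dx$ averaged over $\omega$. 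Finally I would approximate the bounded $F$ of BV norm at most $1$ by step functions and use the sub-Gaussian closeness \textbf{(A3)} of $\tilde{T}_{\epsilon}$ to $T$ to get a bound of order $\epsilon|\log\epsilon|$. This is the standard Keller estimate $|||\tilde{P}_\epsilon-P|||\lesssim$ the displacement of the maps. Should this fail, I will take (b) as the intended assumption that the triple norm goes to $0$ and proceed.

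Given (i)--(iii), the KL theorem gives spectral stability. Outside the disc of radius $\max\{\gamma,\lambda_{i}\}+\delta$, the spectrum of $\tilde{P}_{\epsilon}$ consists of the single simple eigenvalue $1$, because by \eqref{sm} the spectrum of $P$ there is only the simple eigenvalue $1$. The spectral projectors $\Pi_{\epsilon}$ converge to $\Pi F = p_{i}\int F$ in triple norm, and there are resolvent bounds uniform in $\epsilon$ small. The eigenvector of $\tilde{P}_\epsilon$ for eigenvalue $1$ is the invariant density $\tilde{p}_{\epsilon}$, and $\Pi_{\epsilon}F=\tilde{p}_{\epsilon}\int F$ because integration is the left eigenvector. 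I then write $\tilde{P}_{\epsilon}^{n}F-\tilde{p}_{\epsilon}\int F = \tilde{P}_{\epsilon}^{n}(I-\Pi_{\epsilon})F$ and express it by a contour integral of $z^{n}(z-\tilde{P}_{\epsilon})^{-1}$ over a circle of radius $r<1$. This gives the bound $C' r^{n}\lVert F\rVert$ with $C'$ uniform in $\epsilon$.

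To match the constants $\Lambda_{i},\lambda_{i}$ of \eqref{sm} exactly, I would choose $r$ and $\epsilon$ small enough that the uniform KL constant can be absorbed. Here I am implicitly normalising $\lVert F\rVert\le1$, as in \eqref{sm}. If exact equality of the constants is not attainable, I would state the result with $\lambda_{i}$ replaced by any $\lambda_{i}'\in(\max\{\gamma,\lambda_{i}\},1)$ and a constant uniform in $\epsilon$, which is all that Lemma \ref{lemma_Mix_nt} and \eqref{bound_dTV_BV} need.
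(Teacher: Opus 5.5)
Your proposal follows the paper's route: the paper gives no argument beyond presenting the theorem as a consequence of the Keller--Liverani theorem \cite{KL2}, and your uniform Lasota--Yorke bounds in the $BV$/$L^{1}$ setting, the identification of the rank-one projector $F\mapsto\tilde{p}_{\epsilon}\int F$ and the contour-integral estimate fill this in correctly, as do your caveats that (b) as written is vacuous (both $P$ and $\tilde{P}_{\epsilon}$ preserve integrals) and that Keller--Liverani yields $C'r^{n}\lVert F\rVert$ with $r\in(\max\{\gamma,\lambda_{i}\},1)$ for $\epsilon$ small, rather than the exact constants of \eqref{sm}. The one step that would not go through as you describe it is getting the triple-norm bound from \textbf{(A3)} via Keller's estimate, which needs closeness of derivatives and not just of values; instead, combine \textbf{(A3)} with (a): since $\tilde{T}_{\epsilon}=T_{\epsilon}$ on $\{T_{\epsilon}(\omega,x)\in I_{i}\}$, one has $\bigl|\int\Phi\,(\tilde{P}_{\epsilon}-P)F\bigr|\lesssim(\epsilon+q_{\epsilon})\lVert F\rVert_{1}$ whenever $\lVert\Phi\rVert_{\infty}\le1$ and $\mathrm{Lip}\,\Phi\le1$, while (a) and \eqref{LLYY} give $\lVert(\tilde{P}_{\epsilon}-P)F\rVert\lesssim\lVert F\rVert$, and the interpolation $\lVert g\rVert_{1}\lesssim\bigl(\lVert g\rVert\sup_{\Phi}\bigl|\int\Phi g\bigr|\bigr)^{1/2}$ (mollify at scale $h$ and optimise) then gives $\sup_{\lVert F\rVert\le1}\lVert(\tilde{P}_{\epsilon}-P)F\rVert_{1}\lesssim(\epsilon+q_{\epsilon})^{1/2}\to0$, so your fallback reinterpretation of (b) is not even needed.
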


	It follows from Theorem \ref{main_theorem} that the condition \eqref{33} of $\mfM$ holds. Indeed, for $0 < \varsigma < \varsigma_{0}$ fixed,
	\begin{align*}
		\Lambda_{i}\lambda^{n-1}_{i} \leq \varsigma \iff n \geq \frac{\log \varsigma - \log \Lambda_{i}}{\log \lambda_{i}} + 1
	\end{align*}
	and, when the above holds, it follows from \eqref{bound_dTV_BV} and \eqref{eq_Theorem_spectral} that
	\begin{align*}		
		\sup_{x \in \mcE_{i}^{\epsilon}} d^i_{\rm TV} (\delta_x \tilde{\ms P}^{i}_\epsilon(\tilde{\tau}_{n}) \,,\,  \tilde{\mu}_{\epsilon}^{i}) \leq \varsigma
	\end{align*}
	from which we conclude that
	\begin{equation}
		\label{bound_tmix}
		n^{\epsilon,i}_{\rm mix} (\varsigma) \leq \frac{\log \varsigma- \log \Lambda_{i}}{\log \lambda_{i}} + 1 \implies t^{\epsilon,i}_{\rm mix} (\varsigma) \leq \frac{C_{\varsigma}}{\beta_{\epsilon}} \left[\frac{\log \varsigma - \log \Lambda_{i}}{\log \lambda_{i}} + 1\right]
	\end{equation}
	where the last implication is due to Lemma \ref{lemma_Mix_nt}. Taking
	\begin{equation}
        \label{he}
		\bs{h}_{\epsilon} = \frac{a_{\epsilon}}{\beta_{\epsilon}}
	\end{equation}
	for any sequence $a_{\epsilon}$ satisfying $1 \ll a_{\epsilon} \ll q_{\epsilon}^{-1} \wedge \beta_{\epsilon}$, it follows from \eqref{bound_tmix} that \eqref{33} holds and from Proposition \ref{prop_not_jump} (cf. \eqref{hyp_not_jump}) that \eqref{23} holds, and hence $\mfM$ is in force.
	
	In summary, we have proved a general result about the metastability of randomly perturbed maps.
	
	\begin{theorem}
		\label{theorem_existence}
		If there exist random maps $\tilde{T}^{i}_{\epsilon}: \Omega \times I_{i} \mapsto I_{i}$ for $i \in S$ such that $T_{\epsilon}(\omega,x) = \tilde{T}^{i}_{\epsilon}(\omega,x)$ for all $x \in I_{i}$ with $T_{\epsilon}(\omega,x) \in I_{i}$, and the conditions (a) and (b) in Theorem \ref{main_theorem} hold, then $\mfR^{(1)}$ is in force. 
	\end{theorem}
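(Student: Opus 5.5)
The proof will chain the results of this section: we reduce $\mfR^{(1)}$ to the mixing condition $\mfM$ via Proposition \ref{p03}, and then verify the two parts of $\mfM$ with $\mcV_i^\eps = I_i$ and the coupled processes $\tilde\xi^i_\eps(\cdot)$ induced by the random maps $\tilde T^i_\eps$.

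\textbf{Coupling.} We first construct the coupling \eqref{coupling}. On a common probability space we drive $\xi_\eps(\cdot)$ and $\tilde\xi^i_\eps(\cdot)$ with the same exponential clocks and the same i.i.d.\ sequence $\{\omega_n\}$. Each process jumps at the common times $\tau_n$, from $x$ to $T_\eps(\omega_n,x)$ and to $\tilde T^i_\eps(\omega_n,x)$ respectively. By hypothesis the two maps agree at $x\in I_i$ whenever $T_\eps(\omega_n,x)\in I_i$. An induction over jumps then shows that the trajectories coincide up to (excluding) the first jump of $\xi_\eps$ into $I_i^c$, that is, for all $t<H(I_i^c)$. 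This gives \eqref{coupling}, and it also shows that $\tilde\xi^i_\eps$ has generator of the form \eqref{generator_restricted} with transition operator $\tilde P_\eps$ given by \eqref{randomTrans}.

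\textbf{Mixing time.} We apply Theorem \ref{main_theorem} using conditions (a) and (b) together with (A7). This yields \eqref{eq_Theorem_spectral}. In particular $\tilde\xi^i_\eps$ has a unique invariant density $\tilde p_\eps$, so it is ergodic, and for each fixed $x$ we can insert $F=\tilde\rho_\eps(x,\cdot)$ into \eqref{bound_dTV_BV}. The point needing care here is uniformity in $x\in\mcE_i^\eps$. The right-hand side of \eqref{eq_Theorem_spectral} is stated without a factor $\lVert F\rVert$, and I will use it in that normalized form; if needed one checks that $\sup_x\lVert\tilde\rho_\eps(x,\cdot)\rVert$ enters only through the constant $\Lambda_i$, possibly after applying (a) once. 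This gives $n^{\eps,i}_{\rm mix}(\varsigma)\le (\log\varsigma-\log\Lambda_i)/\log\lambda_i+1$. Lemma \ref{lemma_Mix_nt} then gives \eqref{bound_tmix}, so $t^{\eps,i}_{\rm mix}(\varsigma)\le K_\varsigma/\beta_\eps$ with $K_\varsigma$ independent of $\eps$.

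\textbf{Choice of $\bs h_\eps$ and conclusion.} We set $\bs h_\eps=a_\eps/\beta_\eps$ as in \eqref{he}, with $1\ll a_\eps\ll q_\eps^{-1}\wedge\beta_\eps$. Such a sequence exists because $\beta_\eps\to\infty$ by assumption and $q_\eps\to0$ by \eqref{barrier}; for instance take $a_\eps=(q_\eps^{-1}\wedge\beta_\eps)^{1/2}$. With this choice:
\begin{itemize}
\item $\bs h_\eps\ll1$ and $\beta_\eps\bs h_\eps=a_\eps\to\infty$;
\item $q_\eps\beta_\eps\bs h_\eps=q_\eps a_\eps\to0$, so Proposition \ref{prop_not_jump} gives \eqref{23};
\item since $a_\eps\to\infty$, eventually $K_\varsigma\le a_\eps$, which gives \eqref{33}.
\end{itemize}
Thus $\mfM$ holds, and Proposition \ref{p03} yields $\mfR^{(1)}$.

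The main obstacle I expect is the uniform-in-$x$ application of the spectral bound. The hard part is not the coupling, which is combinatorial, but ensuring the family $\{\tilde\rho_\eps(x,\cdot)\}_{x\in\mcE^\eps_i}$ is uniformly bounded in $BV$ so that \eqref{eq_Theorem_spectral} controls \eqref{bound_dTV_BV} uniformly. I would handle this by writing $\tilde\rho_\eps(x,\cdot)$ as the density of $\tilde P_\eps$ applied to a mollified delta and using (a) to absorb its norm into the constant.
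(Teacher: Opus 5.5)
If you take \eqref{eq_Theorem_spectral} literally as a black box, your argument is the paper's own: the coupling, Theorem \ref{main_theorem} inserted into \eqref{bound_dTV_BV}, Lemma \ref{lemma_Mix_nt}, $\bs{h}_\eps=a_\eps/\beta_\eps$, and Propositions \ref{prop_not_jump} and \ref{p03}. You were right to distrust that step, but your remedy does not repair it. In fact the claim $t^{\eps,i}_{\rm mix}(\varsigma)\le K_\varsigma/\beta_\eps$ with $K_\varsigma$ independent of $\eps$ is false.

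The bound \eqref{eq_Theorem_spectral} cannot hold as written: its left side is homogeneous in $F$ and its right side is not. What Keller--Liverani actually gives carries a factor $\lVert F\rVert$, possibly with a rate slightly larger than $\lambda_i$. So \eqref{bound_dTV_BV} is only controlled by $\Lambda_i\lambda_i^{n-1}R_\eps$, where $R_\eps:=\sup_{x\in\mcE_i^\eps}\lVert\tilde\rho_\eps(x,\cdot)\rVert$.

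Because the noise acts on scale $\eps$, $R_\eps$ is of order $\eps^{-1}$. For the uniform noise of Section \ref{Sec_applications}, $\lVert\tilde\rho_\eps(x,\cdot)\rVert=1+2/(\eps+\eps^{q})$ at every $x$ with $T(x)<1/2-\eps^{q}$. This cannot be absorbed into $\Lambda_i$. One application of (a) only gives $C\gamma R_\eps+D$. Replacing $\delta_x$ by a mollifier of width $h$ is worse, since the mollifier itself has BV norm of order $h^{-1}$.

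No other device can help either. In that example, take the event that some step uses an extra branch of $\tilde T_\eps$; it has probability at most $nq_\eps$. Off this event, $\tilde X^\eps_n$ started at $x$ stays in an interval of length at most $(s+1)^n(\eps+\eps^{q})$. Meanwhile $\lVert\tilde p_\eps\rVert_\infty\le C$ uniformly, by (a) and \eqref{BV_inf}. Hence
$d^i_{\rm TV}(\delta_x\tilde{\ms P}^i_\eps(\tilde\tau_n),\tilde\mu^i_\eps)\ge 1-nq_\eps-C(s+1)^n\eps$.
This gives $n^{\eps,i}_{\rm mix}(\varsigma)\ge c\log\eps^{-1}$, and by the Poisson clock also $\beta_\eps t^{\eps,i}_{\rm mix}(\varsigma)\to\infty$.

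What does work is to iterate (a) $m_\eps=\lceil\log(CR_\eps)/\log\gamma^{-1}\rceil$ times. Then $\lVert\tilde P_\eps^{m_\eps}\tilde\rho_\eps(x,\cdot)\rVert\le 1+D$ uniformly in $x$. Only now apply the correctly normalized spectral bound, to this function of integral one. This gives $n^{\eps,i}_{\rm mix}(\varsigma)\le m_\eps+K_\varsigma$. Through Lemma \ref{lemma_Mix_nt} it gives $t^{\eps,i}_{\rm mix}(\varsigma)\le C_\varsigma(1+\log R_\eps)/\beta_\eps$.

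Condition $\mfM$ then needs $1+\log R_\eps\ll a_\eps\ll q_\eps^{-1}\wedge\beta_\eps$; you can take $a_\eps$ to be the geometric mean of the two ends. This amounts to the extra hypothesis $\log R_\eps\ll q_\eps^{-1}\wedge\beta_\eps$. It does not follow from (a), (b) and \textbf{(A3)}, which give $q_\eps\to0$ and $\beta_\eps\to\infty$ only at unspecified rates. So neither your choice $a_\eps=(q_\eps^{-1}\wedge\beta_\eps)^{1/2}$ nor the theorem under (a) and (b) alone is justified by this argument.

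The hypothesis does hold in the examples of Sections \ref{Sec_applications} and \ref{SecEx2}, as soon as $\beta_\eps\gg\log\eps^{-1}$, since there $R_\eps\asymp\eps^{-1}$ and $q_\eps\asymp\eps^{q-1}$. The rest of your proposal is fine: the coupling, the use of Proposition \ref{prop_not_jump}, and the final appeal to Proposition \ref{p03}.
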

	
	\begin{remark}
		\label{rem_LY}
		In view of \eqref{norm_P_Ptilde}, in order to establish (a), it is enough to show for instance that, for each $\epsilon > 0$ and $\omega \in \Omega$, there exist $0 \leq \gamma_{\epsilon,\omega},D_{\epsilon,\omega} < \infty$ such that
			\begin{align*}
				\lVert \tilde{P}_{\epsilon,\omega}F \rVert \leq \gamma_{\epsilon,\omega} \, \lVert F \rVert + D_{\epsilon,\omega} \, \lVert F \rVert_{1}
			\end{align*}
			for all $F \in BV$ with
			\begin{align}
				\label{exp_LY}
				\limsup\limits_{\epsilon \to 0} \int_{\Omega} \gamma_{\epsilon,\omega} \, d\mbP(\omega) < 1 & & \text{ and } & & \limsup\limits_{\epsilon \to 0} \int_{\Omega} D_{\epsilon,\omega} \, d\mbP(\omega) < \infty.
			\end{align}
            More generally, for $n > 1$,
            \begin{align*}
                \lVert \tilde{P}_{\eps}^{n}F \rVert &\leq \int_{\Omega^{n}} \lVert (\tilde{P}_{\epsilon,\omega_{n}} \circ \cdots \circ \tilde{P}_{\epsilon,\omega_{1}})F \rVert \, d\mbP^{\otimes}(\bar{\omega})\, ,
            \end{align*}
            in which $\bar{\omega} = (\omega_{1},\dots,\omega_{n})$ and $\mbP^{\otimes}$ is the $\mbP$-product measure in $\Omega^{n}$. Therefore, condition (a) holds if
            \begin{align*}
                \lVert (\tilde{P}_{\epsilon,\omega_{n}} \circ \cdots \circ \tilde{P}_{\epsilon,\omega_{1}})F \rVert \leq \gamma_{\eps,\bar{\omega}} \, \lVert F \rVert + D_{\eps,\bar{\omega}} \, \lVert F \rVert_{1}
            \end{align*}
            for all $F \in BV$ with
            \begin{align}
				\label{exp_LY2}
				&\limsup\limits_{\epsilon \to 0} \int_{\Omega^{n}} \gamma_{\eps,\bar{\omega}} \, d\mbP^{\otimes}(\bar{\omega}) < 1 & &  \text{ and } & &
                &\limsup\limits_{\epsilon \to 0} \int_{\Omega^{n}} D_{\eps,\bar{\omega}} \, d\mbP^{\otimes}(\bar{\omega}) < \infty.
			\end{align}            
	\end{remark}	
	
	\subsection{A sufficient condition for $\mathfrak{R}^{(2)}_{\mcL}$}
	
	If the condition $\mathfrak{R}^{(1)}$ holds, then $\mathfrak{R}^{(2)}_{\mcL}$ will be a consequence of \textbf{(A5)} (cf. \eqref{cm}) and the following condition. 
	
	For $A \in \mfB_{I}$, recall the definition of the hitting time $H(A)$ of $A$ in \eqref{htime} and let
	\begin{equation*}
		H^{+}(A) = \inf\{t \geq \tau_{1}: \xi_{\epsilon}(t) \in A\} \text{ in which } \tau_{1} = \inf\{t \geq 0: \xi_{\epsilon}(t) \neq \xi_{\epsilon}(0)\}
	\end{equation*}
	be the first visit to $A$ after at least one jump. Recall the definition of $\check{\mcE}_{i}^{\epsilon}$ (cf. \eqref{cwells}) and let 
	\begin{align}
		\label{asym_rate} \nonumber
		\theta_{\epsilon}(i,j) &= \frac{\beta_{\epsilon}}{\mu_{\epsilon}(\mcE_{i}^{\epsilon})} \int_{\mcE_{i}^{\epsilon}} p_{\epsilon}(x) \boldsymbol{P}_{x}^{\epsilon}[H(\mcE_{j}^{\epsilon}) < H^{+}(\check{\mcE}_{j}^{\epsilon})] \ dx \\
		&= \beta_{\epsilon} \ \! \boldsymbol{P}_{\mu_{\epsilon}}^{\epsilon}[H(\mcE_{j}^{\epsilon}) < H^{+}(\check{\mcE}_{j}^{\epsilon})|\xi_{\epsilon}(0) \in \mcE_{i}^{\epsilon}]
	\end{align}
	recalling that $p_{\epsilon}$ is the probability density function of $\mu_{\epsilon}$. The probability in the right-hand side of \eqref{asym_rate} is that of the process $\xi_{\epsilon}(\cdot)$, starting from the invariant measure conditioned on $\xi_{\epsilon}(0) \in \mcE^{\epsilon}_{i}$, attaining after at least one jump the set $\mcE_{j}^{\epsilon}$ before any other metastable well, inclusive $\mcE_{i}$. The following condition, together with $\mfR^{(1)}$ and \textbf{(A5)} is sufficient for metastability.
	
	\vspace{0.25cm}
	
	\noindent \textit{\textbf{\bf Condition \textbf{(H0)}.} For $i \neq j \in S$, the sequence $\theta_{\epsilon}(i,j)$ converges as $\epsilon \to 0$. Denote this limit by
		\begin{equation}
			\label{H0} \tag{\textbf{\textbf{H0}}}
			\theta(i,j) = \lim_{\epsilon\to 0} \theta_{\epsilon}(i,j).	
	\end{equation}}
	
	\vspace{0.25cm}
	
	The next result, that is an extension of Corollary 7.3 in \cite{landim2021resolvent} to Markov processes in uncountable state spaces, shows that if conditions $\mathfrak{R}^{(1)}$, \eqref{H0} and \textbf{(A5)} hold, then the condition $\mathfrak{R}_{\mcL}$ holds with the generator $\mcL$ in \eqref{gen_limit} given by the rates in \eqref{H0}.
	
	\begin{proposition}
		\label{cor_7.3}
		If conditions $\mathfrak{R}^{(1)}$, \textbf{(A5)} (cf. \eqref{cm}) and \eqref{H0} hold, then condition $\mathfrak{R}^{(2)}_{\mcL}$ holds, in which $\mcL$ is the generator with the rates given by the limit in \eqref{H0}. In particular, condition $\mathfrak{R}_{\mcL}$ holds.
	\end{proposition}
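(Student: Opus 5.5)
The plan is to integrate the resolvent equation \eqref{resolvent_eq} against the invariant density, restricted to a single well, and derive a linear system for the vector $f_\epsilon$. Write $F_\epsilon = f_\epsilon(i) + o(1)$ uniformly on $\mcE^\epsilon_i$, which is what $\mathfrak{R}^{(1)}$ gives once one checks that the $\mu_\epsilon$-average $f_\epsilon(i)$ lies between $\inf$ and $\sup$ of $F_\epsilon$ on $\mcE^\epsilon_i$. Since $\|F_\epsilon\|_\infty \le \|g\|_\infty/\lambda$, the sequence $f_\epsilon$ is bounded. It therefore suffices to show that every limit point $f^*$ of $f_\epsilon$ solves $(\lambda - \mcL)f^* = g$. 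Uniqueness of that solution, which holds since $\lambda > 0$ and $\mcL$ is a generator on the finite set $S$, then gives convergence of the whole sequence.

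The key identity comes from the trace process on $\mcE^\epsilon$. Its generator $\mcL_\epsilon^{\mcE}$ acts as
\[
(\mcL_\epsilon^{\mcE} H)(x) = \beta_\epsilon \int_{\mcE^\epsilon} \bigl[H(y)-H(x)\bigr]\, R_\epsilon(x,dy),
\]
where $R_\epsilon(x,\cdot)$ is the law of $\xi_\epsilon(H^+(\mcE^\epsilon))$ started at $x$. Its invariant measure is $\mu_\epsilon$ conditioned on $\mcE^\epsilon$; this is the standard trace result, which I would reprove for jump processes on $I$ via the strong Markov property at $H^+(\mcE^\epsilon)$.

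Rather than use the trace generator directly, I would use the resolvent equation with a test function. Fix $i$, multiply \eqref{resolvent_eq} by $p_\epsilon \chi_{\mcE^\epsilon_i}$, and integrate. This gives
\[
\lambda \mu_\epsilon(\mcE^\epsilon_i) f_\epsilon(i) - \int_{\mcE^\epsilon_i} p_\epsilon\, \mcL_\epsilon F_\epsilon \, dx = g(i)\mu_\epsilon(\mcE^\epsilon_i).
\]
The task is then to show that
\[
\frac{1}{\mu_\epsilon(\mcE^\epsilon_i)} \int_{\mcE^\epsilon_i} p_\epsilon\, \mcL_\epsilon F_\epsilon \, dx = \sum_j \theta_\epsilon(i,j)\,\bigl(f_\epsilon(j) - f_\epsilon(i)\bigr) + o(1).
\]
To do this, introduce the equilibrium-type functions $h_j(x) = \boldsymbol{P}^\epsilon_x[H(\mcE^\epsilon_j) < H(\check{\mcE}^\epsilon_j)]$ and decompose $F_\epsilon = \sum_j f_\epsilon(j) h_j + R_\epsilon$. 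The remainder $R_\epsilon$ is $o(1)$ on $\mcE^\epsilon$ by $\mathfrak{R}^{(1)}$. On $\Delta^\epsilon$ it is controlled by the harmonic-type extension: on $\Delta^\epsilon$ one has $(\lambda - \mcL_\epsilon)F_\epsilon = 0$ while $\mcL_\epsilon h_j = 0$, so $R_\epsilon$ satisfies $\mcL_\epsilon R_\epsilon = \lambda F_\epsilon$ there. Its contribution is bounded by the expected time spent in $\Delta^\epsilon$ before returning to $\mcE^\epsilon$, weighted by $\mu_\epsilon$. That quantity is $O(\mu_\epsilon(\Delta^\epsilon)/\mu_\epsilon(\mcE^\epsilon_i)) = o(1)$ by stationarity (Kac-type argument) and \textbf{(A5)}.

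For the main term, note that $\mcL_\epsilon h_j(x)$ for $x \in \mcE^\epsilon_i$ equals $\beta_\epsilon\bigl(\boldsymbol{P}^\epsilon_x[H^+(\mcE^\epsilon_j) < H^+(\check{\mcE}^\epsilon_j)] - \chi_{\mcE^\epsilon_j}(x)\bigr)$, by one-step analysis. Integrating against $p_\epsilon/\mu_\epsilon(\mcE^\epsilon_i)$ yields $\theta_\epsilon(i,j)$ for $j \ne i$ and $-\sum_{k\ne i}\theta_\epsilon(i,k)$ for $j=i$. The second value follows because the probabilities of first (re)visiting $\mcE^\epsilon_k$ sum to one over $k$. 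Passing to the limit with \eqref{H0} and \textbf{(A5)} gives $(\lambda - \mcL)f^* = g$.

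The main obstacle is the remainder estimate on $\Delta^\epsilon$. The quantity $\beta_\epsilon \int_{\mcE_i} p_\epsilon\, \mcL_\epsilon R_\epsilon$ carries a factor $\beta_\epsilon \to \infty$, so pointwise smallness of $R_\epsilon$ is not enough. My first attempt would use the symmetric-looking identity
\[
\int_I p_\epsilon\, \mcL_\epsilon(\chi_{\mcE_i} H)\, dx = 0,
\]
valid for bounded $H$ by invariance of $\mu_\epsilon$. This transfers the action of $\mcL_\epsilon$ from $R_\epsilon$ onto indicator functions. The resulting flux terms across $\partial\mcE_i$ would then be bounded by $\lambda\|F_\epsilon\|_\infty\, \mu_\epsilon(\Delta^\epsilon)$ divided by $\mu_\epsilon(\mcE_i)$, using \eqref{resolvent_eq} on $\Delta^\epsilon$.
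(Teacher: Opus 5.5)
Your reduction to limit points, the boundedness of $f_\epsilon$, and your main term are correct. For the forward potentials $h_j$ one does get $\int_{\mcE^\epsilon_i}p_\epsilon\,\mcL_\epsilon h_j\,dx=\mu_\epsilon(\mcE^\epsilon_i)\theta_\epsilon(i,j)$ for $j\neq i$, and $-\mu_\epsilon(\mcE^\epsilon_i)\sum_{k\neq i}\theta_\epsilon(i,k)$ for $j=i$. The gap is the remainder $\int_{\mcE^\epsilon_i}p_\epsilon\,\mcL_\epsilon R_\epsilon\,dx$, and the fix in your last paragraph does not work. The identity $\int_I p_\epsilon\,\mcL_\epsilon(\chi_{\mcE^\epsilon_i}R_\epsilon)\,dx=0$ turns the remainder into the one-step fluxes
\[
\beta_\epsilon\int_{\mcE^\epsilon_i}\!\int_{(\mcE^\epsilon_i)^c}p_\epsilon(x)\rho_\epsilon(x,y)R_\epsilon(y)\,dy\,dx-\beta_\epsilon\int_{(\mcE^\epsilon_i)^c}\!\int_{\mcE^\epsilon_i}p_\epsilon(x)\rho_\epsilon(x,y)R_\epsilon(y)\,dy\,dx .
\]
Using \eqref{resolvent_eq} on $\Delta^\epsilon$ together with Kac only removes the part of $R_\epsilon$ on the hole generated by the source $\lambda F_\epsilon$. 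What remains in each term is the unquantified $o(1)$ of $\mfR^{(1)}$ (values of $R_\epsilon$ on the wells, or their averages at the re-entry point when $y\in\Delta^\epsilon$). It is multiplied by a one-step flux between $\mcE^\epsilon_i$ and $\Delta^\epsilon$, such as $\beta_\epsilon\int_{\Delta^\epsilon}p_\epsilon(x)\rho_\epsilon(x,\mcE^\epsilon_i)\,dx$, which is of order $\beta_\epsilon\mu_\epsilon(\Delta^\epsilon)$ when holes are left quickly. The hypotheses do not control this quantity: \textbf{(A5)} gives $\mu_\epsilon(\Delta^\epsilon)\to0$ with no rate relative to $\beta_\epsilon^{-1}$. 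It typically diverges: if escapes from $I_i$ happen from $\Delta^\epsilon_i$, each step there escapes with probability at most $q_\epsilon$ by \eqref{barrier}, which forces $\beta_\epsilon\mu_\epsilon(\Delta^\epsilon_i)$ to be at least of order $q_\epsilon^{-1}$. So the two flux terms are individually large, they cancel only to leading order, and your bound $\lambda\lVert F_\epsilon\rVert_\infty\mu_\epsilon(\Delta^\epsilon)/\mu_\epsilon(\mcE^\epsilon_i)$ is not justified.

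The missing step is to group jumps into excursions from $\mcE^\epsilon$, i.e.\ to actually use the trace process you set aside. For $y\in\Delta^\epsilon$ one has $F_\epsilon(y)=\boldsymbol{E}^\epsilon_y[e^{-\lambda H(\mcE^\epsilon)}F_\epsilon(\xi_\epsilon(H(\mcE^\epsilon)))]$. Hence $R_\epsilon(y)$ equals the mean of $R_\epsilon$ at the re-entry point, up to an error $\lambda\lVert F_\epsilon\rVert_\infty\boldsymbol{E}^\epsilon_y[H(\mcE^\epsilon)]$. On $\mcE^\epsilon_i$ this gives $\mcL_\epsilon R_\epsilon=\mcL^{\mcE}_\epsilon R_\epsilon+e_\epsilon$, where $\mcL^{\mcE}_\epsilon$ is your trace generator and $\int_{\mcE^\epsilon_i}p_\epsilon|e_\epsilon|\le\lambda\lVert F_\epsilon\rVert_\infty\mu_\epsilon(\Delta^\epsilon)$ by Kac. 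Invariance of $\mu_\epsilon$ restricted to $\mcE^\epsilon$ under the trace kernel makes the $\mcE^\epsilon_i\to\mcE^\epsilon_i$ excursions (the source of the divergent one-step fluxes) cancel exactly. What is left are trace fluxes between $\mcE^\epsilon_i$ and $\check{\mcE}^\epsilon_i$, of mass $\mu_\epsilon(\mcE^\epsilon_i)\sum_{j\neq i}\theta_\epsilon(i,j)$, bounded by \eqref{H0}. This reproduces the paper's error bound without needing the adjoint process.

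The paper obtains the same cancellation by testing \eqref{resolvent_eq} against the adjoint equilibrium potential $h^{\epsilon,\dagger}_i$ instead of $\chi_{\mcE^\epsilon_i}$. Since $\mcL^\dagger_\epsilon h^{\epsilon,\dagger}_i=0$ on $\Delta^\epsilon$, the hole drops out of $\langle F_\epsilon,\mcL^\dagger_\epsilon h^{\epsilon,\dagger}_i\rangle_{\mu_\epsilon}$. The weights left on the wells are the adjoint rates $\theta^\dagger_\epsilon$, and Lemma~\ref{lemma5.1} (via $\text{cap}_\epsilon=\text{cap}^\dagger_\epsilon$) converts them into $\theta_\epsilon$.
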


    When there exists a $\mathfrak{s} > 0$ such that, for all $\eps > 0$,
    \begin{align}
        \label{holes_far}
        \min_{\substack{i,j,j^{\prime} \in S \\ i \neq j \neq j^{\prime}}} d(\Delta_{j^{\prime},j}^{\eps},I_{i}) > \mathfrak{s},
    \end{align}
    then, in general, it should hold $\theta(i,j) > 0$ whenever $\Delta_{i,j}^{\epsilon} \neq \emptyset$ and $\theta(i,j) = 0$ otherwise, since jumps between invariant components $I_{i}$ and $I_{j}$ should happen only when a hole between them exists. In this case, \eqref{H0} holds if
    \begin{equation}
		\label{timeScale}
		0 < \lim_{\epsilon\to 0} \frac{\boldsymbol{P}_{\mu_{\epsilon}}^{\epsilon}[H(\mcE_{j}^{\epsilon}) < H^{+}(\check{\mcE}_{j}^{\epsilon})|\xi_{\epsilon}(0) \in \mcE_{i}^{\epsilon}]}{\boldsymbol{P}_{\mu_{\epsilon}}^{\epsilon}[H(\mcE_{j^{\prime}}^{\epsilon}) < H^{+}(\check{\mcE}_{j^{\prime}}^{\epsilon})|\xi_{\epsilon}(0) \in \mcE_{i^{\prime}}^{\epsilon}]} < \infty
	\end{equation}
	for all pairs $(i,j)$ and $(i^{\prime},j^{\prime})$ with $\Delta_{i,j}^{\epsilon}, \Delta_{i^{\prime},j^{\prime}}^{\epsilon} \neq \emptyset$, and 
	\begin{equation}
		\label{timeScale2}
		\lim_{\epsilon\to 0} \frac{\boldsymbol{P}_{\mu_{\epsilon}}^{\epsilon}[H(\mcE_{j}^{\epsilon}) < H^{+}(\check{\mcE}_{j}^{\epsilon})|\xi_{\epsilon}(0) \in \mcE_{i}^{\epsilon}]}{\boldsymbol{P}_{\mu_{\epsilon}}^{\epsilon}[H(\mcE_{j^{\prime}}^{\epsilon}) < H^{+}(\check{\mcE}_{j^{\prime}}^{\epsilon})|\xi_{\epsilon}(0) \in \mcE_{i^{\prime}}^{\epsilon}]} = 0
	\end{equation}
	whenever $\Delta_{i,j}^{\epsilon} = \emptyset$, but $\Delta_{i,j^{\prime}}^{\epsilon} \neq \emptyset$.

    On the other hand, if \eqref{holes_far} does not hold, then it might be possible to get from $\mcE_{i}^{\eps}$ to $\mcE_{j}^{\eps}$ even if $\Delta_{i,j}^{\epsilon} = \emptyset$ without passing through $\mcE_{j^{\prime}}^{\eps}$ by jumping directly from $\Delta_{i,j^{\prime}}^{\eps}$ to $\Delta_{j^{\prime},j}^{\eps}$ and then to $\mcE_{j}^{\eps}$. In this case, it could happen that $\theta(i,j) > 0$ when $\Delta_{i,j}^{\epsilon} = \emptyset$.
	
	Inequality \eqref{timeScale} implies that the timescale to jump from $\mcE_{i}$ to $\mcE_{j}$ and from $\mcE_{i^{\prime}}$ to $\mcE_{j^{\prime}}$ is the same whenever there is a hole between the respective components. In order for \eqref{timeScale} to hold, one should define the metastable wells by properly choosing the sets $B_{i,j}^{\epsilon}$ in the definition of $\Delta_{i,j}^{\epsilon}$ (cf. \eqref{holes}), as will be illustrated in the example of Section \ref{SecEx2}. Conditions \eqref{timeScale} and \eqref{timeScale2} are analogous to the \textit{limiting holes ratio} and \textit{limiting averaged holes ratio} conditions for metastability in the contexts of \cite{bahsoun2011metastability,bahsoun2013escape,gonzalez2011approximating,gonzalez2025jumping}.
	
	\section{Stochastic stability}
	\label{SecCS}
	
	In view of \textbf{(A1)} and \textbf{(A3)}, and the condition $\mfC_{\mcL}$ of metastability, it is expected that, when $\epsilon \to 0$, $\mu_{\epsilon}$ will converge to a convex combination of the ACIMs $\mu_{i}$ of the original map $T$ restricted to the invariant components $I_{i}$. This kind of convergence, known as stochastic stability, has been widely studied in the literature, see for instance \cite{AA,GK}  and references therein. 
	
	In this section, we show that, if the Markov process associated with the randomly perturbed map $T_{\epsilon}$ is $\mcL$-metastable, then the invariant measure $\mu_{\epsilon}$ converges strongly to the convex combination
	\begin{align*}
		\mu \coloneqq \sum_{i \in S} \pi(i) \mu_{i}
	\end{align*}
	in which $\pi$ is the invariant measure of the Markov process with generator $\mcL$ and $\mu_{i}$ is the invariant measure of the $i$-th component of the map $T$. Furthermore, we present an upper bound on the rate of this convergence. 
	
	We first note that, if $\mu_{\epsilon}$ converges to a convex combination $\sum_{i \in S} \alpha_{i} \mu_{i}$ then, by definition of the invariant measure, it must hold
	\begin{align*}
		\alpha_{i} = \lim\limits_{\epsilon \to 0} \lim\limits_{t \to \infty} \boldsymbol{P}_{x}^{\epsilon}[\xi_{\epsilon}(t) \in I_{i}]
	\end{align*}
	for any $x \in I$. Recall the definition of the trace process (cf. \eqref{trace_process}) and observe that $S^{\mcE^{\eps}}(T^{\mcE^{\eps}}(t)) \geq t$ for all $t > 0$. Since
	\begin{align*}
		\boldsymbol{P}_{x}^{\epsilon}[\xi_{\epsilon}(t) \in I_{i}] = \boldsymbol{P}_{x}^{\epsilon}[\xi_{\epsilon}(t) \in \mcE_{i}^{\epsilon}] + \boldsymbol{P}_{x}^{\epsilon}[\xi_{\epsilon}(t) \in \Delta_{i}^{\epsilon}]
	\end{align*}
	and $\{\xi_{\epsilon}^{\mcE^{\eps}}(T^{\mcE^{\epsilon}}(t)) \in \mcE_{i}^{\epsilon}\} = \{Y_{\eps}(T^{\mcE^{\epsilon}}(t)) = i\}$, we have that
	\begin{align*}
		\alpha_{i} &= \lim\limits_{\epsilon \to 0} \lim\limits_{t \to \infty} \boldsymbol{P}_{x}^{\epsilon}[Y_{\eps}(T^{\mcE^{\epsilon}}(t)) = i] + \lim\limits_{\epsilon \to 0} \mu_{\epsilon}(\Delta_{i}^{\epsilon}) = \pi(i)
	\end{align*}
	in which the second equality follows from the condition $\mathfrak{C}_{\mcL}$ of metastability and \textbf{(A5)}. Therefore, if the limit of $\mu_{\epsilon}$ is a convex combination $\sum_{i \in S} \alpha_{i} \mu_{i}$, then $\alpha_{i} = \pi(i)$. 
    
	We apply the \textit{Chen-Stein method}, proposed by \cite{stein1972} as a method for bounding the approximation error between the distribution of the sum of random variables and the Gaussian distribution, and extended by \cite{chen1975} to the Poisson distribution. In \cite{barbour1988} the Chen-Stein method was further extended to bound the approximation error to the invariant measure of a Markov process in what has been known as the \textit{generator method}.
	
	The generator method is based on the equivalence
	\begin{align*}
		\nu[\mcL_{\epsilon}F] = 0 \text{ for all } F \in \mcD(\mcL_{\epsilon}) \iff \nu = \mu_{\epsilon},
	\end{align*}
	in which $\nu$ is a measure in $\mfB_{I}$, $\nu[\cdot]$ means expectation under $\nu$ and $\mcD(\mcL_{\epsilon})$ is the domain of $\mcL_{\epsilon}$. Fix $A \in \mfB_{I}$ and let $F_{A}$ be the solution of the equation
	\begin{align}
		\label{gen_equation}
		\mcL_{\epsilon}F_{A}(x) = \chi_{A}(x) - \mu_{\epsilon}(A), \ \ \forall x \in I.
	\end{align}
	Taking expectations with respect to $\nu$ on both sides of \eqref{gen_equation}, the absolute values and then the supremum over $A \in \mfB_{I}$ we conclude that
	\begin{align}
		\label{gen_equation2}
		\sup_{A \in \mfB_{I}} |\nu[\mcL_{\epsilon}F_{A}]| = \sup_{A \in \mfB_{I}} \left|\nu(A) - \mu_{\epsilon}(A)\right| = d_{TV}(\nu,\mu_{\epsilon}),
	\end{align}
	so the total variation distance between $\nu$ and $\mu_{\epsilon}$ can be bounded by bounding the left-hand side of \eqref{gen_equation2}. Observe that if $\nu = \mu_{\epsilon}$ then both sides of \eqref{gen_equation2} equal zero, as expected.

    Recall the definition of $q_{\eps}$ in \eqref{qeps} and that $\tilde{\mu}_{\epsilon}^{i}$ is the invariant measure of the restricted process defined in Section \ref{SecR1}, and let $\tilde{\mu}_{\eps} \coloneqq \sum_{i \in S} \mu_{\eps}(I_{i}) \tilde{\mu}_{\epsilon}^{i}$. With an adaptation of the generator method, we can prove the following bound for $d_{TV}(\mu_{\epsilon},\mu)$ in terms of $ d_{TV}(\tilde{\mu}_{\eps},\mu)$ and $q_{\eps}$.

    \begin{proposition}
		\label{theorem_dTV}
		Assume the conditions of Proposition \ref{prop_not_jump} and Theorem \ref{main_theorem} are in force so, in particular, $\mathfrak{M}$ holds. If the process $\xi_{\epsilon}(\cdot)$ is $\mcL$-metastable, then 
		\begin{align*}
            d_{TV}(\mu_{\epsilon},\mu) \leq C \, q_{\eps} \, \log q_{\eps}^{-1} +  d_{TV}(\tilde{\mu}_{\eps},\mu)
		\end{align*}
        for a constant $C > 0$ that depends on quantities in \textbf{(A1)}-\textbf{(A7)}.
	\end{proposition}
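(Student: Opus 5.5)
By the triangle inequality, $d_{TV}(\mu_\epsilon,\mu)\le d_{TV}(\mu_\epsilon,\tilde\mu_\epsilon)+d_{TV}(\tilde\mu_\epsilon,\mu)$. It therefore suffices to show $d_{TV}(\mu_\epsilon,\tilde\mu_\epsilon)\le C\,q_\epsilon\log q_\epsilon^{-1}$. I would do this with the generator method applied to $\nu=\tilde\mu_\epsilon$, as in \eqref{gen_equation2}. The real work is to bound $\tilde\mu_\epsilon[\mcL_\epsilon F_A]$ uniformly in $A\in\mfB_I$.

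\textbf{Step 1 (reduce to the restricted generators).} Write $\tilde\mu_\epsilon[\mcL_\epsilon F_A]=\sum_{i}\mu_\epsilon(I_i)\,\tilde\mu^i_\epsilon[\mcL_\epsilon F_A]$. Each $\tilde\mu_\epsilon^i$ is invariant for $\tilde\xi^i_\epsilon$, so $\tilde\mu^i_\epsilon[\tilde{\mcL}^i_\epsilon (F_A|_{I_i})]=0$. Hence
\begin{equation*}
\tilde\mu^i_\epsilon[\mcL_\epsilon F_A]=\tilde\mu^i_\epsilon\big[(\mcL_\epsilon-\tilde{\mcL}^i_\epsilon)F_A\big].
\end{equation*}
By the coupling property, $T_\epsilon(\omega,x)=\tilde T^i_\epsilon(\omega,x)$ whenever $T_\epsilon(\omega,x)\in I_i$. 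So the kernels $\rho_\epsilon(x,\cdot)$ and $\tilde\rho_\epsilon(x,\cdot)$ differ only on an event of probability at most $q_\epsilon$ (cf. \eqref{qeps}). This gives
\begin{equation*}
|(\mcL_\epsilon-\tilde{\mcL}^i_\epsilon)F(x)|\le 2\beta_\epsilon q_\epsilon\,\mathrm{osc}(F),
\end{equation*}
with $\mathrm{osc}(F)$ the oscillation of $F$. Everything then hinges on controlling the oscillation of $F_A$. Since $\mcL_\epsilon=\beta_\epsilon L_\epsilon$, we have $\beta_\epsilon\,\mathrm{osc}(F_A)=\mathrm{osc}(F_A^{(1)})$, where $F_A^{(1)}$ solves the Poisson equation for the unsped chain generator $L_\epsilon$.

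\textbf{Step 2 (bound the Poisson solution).} Represent $F_A^{(1)}(x)=-\int_0^\infty(\ms P^{(1)}_\epsilon(t)\chi_A(x)-\mu_\epsilon(A))\,dt$ for the unsped semigroup. A global bound would involve the slow inter-well mixing and would be too large, so this naive representation is not enough. Instead I would truncate at a time $t_\epsilon$ and split the integral.

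On $[0,t_\epsilon]$ the contribution is at most $t_\epsilon$ trivially. The tail must be handled using that the restricted chains mix in $O(1)$ steps: by Theorem \ref{main_theorem} the total variation distance decays like $\Lambda_i\lambda_i^n$, and Lemma \ref{lemma_Mix_nt} transfers this to continuous time. Up to the exit time from $I_i$, which by Proposition \ref{prop_not_jump} is of order $q_\epsilon^{-1}$ steps, the process started at any $x\in I_i$ is within $\Lambda_i\lambda_i^{n}$ of $\tilde\mu^i_\epsilon$. Choosing $t_\epsilon\asymp \log q_\epsilon^{-1}$ makes $\Lambda_i\lambda_i^{t_\epsilon}\lesssim q_\epsilon$.

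The point is that I do not need $\mathrm{osc}(F_A)$ globally. Because $\tilde\mu^i_\epsilon$ sits on $I_i$, and the coupling error itself is a difference of $F_A$ at two post-jump points, I only need differences $|F_A(y)-F_A(z)|$ that are averaged against $\tilde\mu^i_\epsilon$. Two copies started at $y,z\in I_i$ can be coupled to coalesce after $O(\log q_\epsilon^{-1})$ steps with error $O(q_\epsilon)$, via mixing of the restricted chain plus a maximal coupling. This coupling yields $|F^{(1)}_A(y)-F^{(1)}_A(z)|\lesssim \log q_\epsilon^{-1}+q_\epsilon\,\|F^{(1)}_A\|_{\rm osc}$. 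For $y\notin I_i$ (the escape case) one uses instead the crude bound multiplied by the probability $q_\epsilon$ of that event.

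\textbf{Step 3 (close).} Combining the steps gives $d_{TV}(\tilde\mu_\epsilon,\mu_\epsilon)\le C q_\epsilon\log q_\epsilon^{-1}$ plus a residual of order $q_\epsilon\times$(the escape contribution). The main obstacle is precisely that residual: after the chain leaves $I_i$, the values of $F_A$ in another well differ by an amount governed by the slow inter-well dynamics, potentially of order $1/q_\epsilon$. I would try to absorb it using metastability. By $\mfR_{\mcL}$ (Theorem \ref{theorem_resolvent}) applied to suitable $g$, $F_A$ is asymptotically constant on wells. The jump-to-another-well terms, weighted by $\mu_\epsilon(I_i)$ and summed over $i$, then reproduce the flux balance of the stationary measure, so they cancel at leading order. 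Making this cancellation quantitative is the hard part. If it resists, I would restrict the supremum in \eqref{gen_equation2} to sets $A$ and compare $\mu_\epsilon(I_i)$ weights via stationarity, using that $\tilde\mu_\epsilon$ was built with exactly the weights $\mu_\epsilon(I_i)$.
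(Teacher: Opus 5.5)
Your reduction to bounding $d_{TV}(\mu_\eps,\tilde\mu_\eps)$ by the triangle inequality matches the paper, but the bound itself does not close. The escape term in Step~3 is the obstruction, not a residual.

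Since \eqref{gen_equation2} is an identity, testing the full Poisson equation $\mcL_\eps F_A=\chi_A-\mu_\eps(A)$ against $\tilde\mu_\eps$ means every leftover must in the end be controlled by $d_{TV}(\tilde\mu_\eps,\mu_\eps)$ itself. The escape term is genuinely of order one, for two reasons:

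\begin{enumerate}
\item $F_A$ differs between wells by amounts $c_j-c_i$ of order one on the sped-up scale.
\item $\beta_\eps$ times the $\tilde\mu^i_\eps$-averaged escape probability is of the order of the limiting rates $\theta(i,j)$.
\end{enumerate}

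So $\tilde\mu^i_\eps[(\mcL_\eps-\tilde{\mcL}^i_\eps)F_A]$ is of order one. The cancellation you hope for does not come for free. Stationarity of $\mu_\eps$ gives exact one-step flux balance only for escape probabilities averaged against $\mu^i_\eps=\mu_\eps(\cdot\,|\,I_i)$. Replacing $\mu^i_\eps$ by $\tilde\mu^i_\eps$ costs up to $2q_\eps\, d_{TV}(\tilde\mu^i_\eps,\mu^i_\eps)$ per pair $(i,j)$. After multiplying by $\beta_\eps|c_j-c_i|$, you get only
$d_{TV}(\tilde\mu_\eps,\mu_\eps)\le Cq_\eps\log q_\eps^{-1}+K\,d_{TV}(\tilde\mu_\eps,\mu_\eps)$ with $K\asymp\beta_\eps q_\eps\max_{i,j}|c_j-c_i|$.

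By Proposition \ref{lemma_q_eps} this $K$ is bounded below, and it can diverge. In the two-well example of Section \ref{Sec_applications}, the averaged escape probability is of order $\eps^{2q-1}$ while $q_\eps\asymp\eps^{q-1}$, so $\beta_\eps q_\eps\to\infty$. The argument is therefore circular. The within-well coupling of Step~2 does not touch this term, and your fallback sentence is not an argument.

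The missing idea is to swap the roles of the two dynamics and truncate the time horizon, so that no inter-well quantity ever appears. Fix $B\subset I_i$ and take $F_B=\mu_\eps(I_i)\,\chi_{I_i}\int_0^{\mb h_\eps}\tilde{\ms P}^i_\eps(t)\chi_B\,dt$. This function:
\begin{enumerate}
\item solves $\tilde{\mcL}_\eps F_B=\mu_\eps(I_i)\,[\tilde{\ms P}^i_\eps(\mb h_\eps)\chi_B-\chi_B]\,\chi_{I_i}$;
\item vanishes off $I_i$;
\item satisfies $0\le F_B\le \mb h_\eps=a_\eps/\beta_\eps$.
\end{enumerate}

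Test it against the true invariant measure $\mu_\eps$, not against $\tilde\mu_\eps$. Since $\mu_\eps[\mcL_\eps F_B]=0$ exactly, $\mu_\eps[\tilde{\mcL}_\eps F_B]=\mu_\eps[(\tilde{\mcL}_\eps-\mcL_\eps)F_B]$. Your Step~1 mechanism (the kernels differ only on the escape event) now acts on a function of size $\mb h_\eps$. This gives $|\mu_\eps(B)-\mu_\eps(I_i)\,\bs P^\eps_{\mu^i_\eps}[\tilde\xi^i_\eps(\mb h_\eps)\in B]|\le C\beta_\eps q_\eps \mb h_\eps=Cq_\eps a_\eps$.

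The truncation error $|\bs P^\eps_{\mu^i_\eps}[\tilde\xi^i_\eps(\mb h_\eps)\in B]-\tilde\mu^i_\eps(B)|$ is purely intra-well mixing. It is at most $C(\lambda_i\vee 2/e)^{a_\eps/2}$ by Theorem \ref{main_theorem} together with the Poisson-count (Chernoff) argument of Lemma \ref{lemma_Mix_nt}.

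Choose $a_\eps=2\log_{\lambda^\star}(q_\eps\vee\beta_\eps^{-1})$ with $\lambda^\star=\max_i\lambda_i\vee 2/e$. This gives $d_{TV}(\mu_\eps,\tilde\mu_\eps)\le C(q_\eps\vee\beta_\eps^{-1})\log(q_\eps^{-1}\wedge\beta_\eps)$. Metastability enters only through $q_\eps^{-1}\le C\beta_\eps$ (Proposition \ref{lemma_q_eps}), which reduces this to $Cq_\eps\log q_\eps^{-1}$.
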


    We now bound $d_{TV}(\tilde{\mu}_{\eps},\mu)$. Since
    \begin{align*}
        d_{TV}(\tilde{\mu}_{\eps},\mu) &= \sup_{A \in \mathfrak{B}_{I}} \left|\sum_{i \in S} \mu_{\eps}(I_{i}) \, \tilde{\mu}_{\epsilon}^{i}(A \cap I_{i}) - \pi(i) \, \mu_{i}(A \cap I_{i})\right|\\
        &\leq \sum_{i \in S} \pi(i) \sup_{B \in \mathfrak{B}_{I_{i}}} \left|\tilde{\mu}_{\epsilon}^{i}(B) - \mu_{i}(B)\right| + \tilde{\mu}_{\epsilon}^{i}(B) \left|\mu_{\eps}(I_{i}) - \pi(i)\right|\\
        &\leq \sum_{i \in S} \pi(i) \, d_{TV}(\tilde{\mu}_{\epsilon}^{i},\mu_{i}) + \sum_{i \in S} \left|\mu_{\eps}(I_{i}) - \pi(i)\right|
    \end{align*}
    we shall bound $ d_{TV}(\tilde{\mu}_{\epsilon}^{i},\mu_{i})$. 
    
    Fix $i \in S$. By a deduction analogous to \eqref{bound_dTV_BV}, for any $n \geq 1$ fixed, denoting by $p_{i}$ and $\tilde{p}_{\epsilon}^{i}$ the probability density functions of $\mu_{i}$ and $\tilde{\mu}_{\epsilon}^{i}$, respectively, it holds
	\begin{align*}
		d_{TV}(\tilde{\mu}_{\epsilon}^{i},\mu_{i}) &\leq \left\lVert \tilde{p}_{\epsilon}^{i} - p_{i} \right\rVert_{1} = \left\lVert \tilde{P}_{\epsilon}^{n}\tilde{p}_{\epsilon}^{i} - P^{n} p_{i} \right\rVert_{1}
	\end{align*}
    in which $\tilde{P}_{\epsilon} \coloneqq \tilde{P}_{\epsilon}^{i}$ is the transfer operator \eqref{transfer_oper} associated with $\tilde{\xi}_{\eps}^{i}(\cdot)$. By a standard technique, see for instance Theorem 3 in \cite{GG}, we have
    \begin{equation*}
        \left\lVert \tilde{P}_{\epsilon}^{n}\tilde{p}_{\epsilon}^{i} - P^{n} p_{i} \right\rVert_1\le \left\lVert \tilde{P}_{\epsilon}^{n}\tilde{p}_{\epsilon}^{i} - P^{n} \tilde{p}_{\epsilon}^{i}
     \right\rVert_1+\left\lVert P^n \tilde{p}_{\epsilon}^{i} - P^{n} p_{i} \right\rVert_1.
    \end{equation*}
	Since the difference of the densities has an integral equal to zero, we can apply the bound (\ref{sm}) in \textbf{(A7)} to the second term on the left-hand side to get
    \begin{equation*}
        \left\lVert P^n (\tilde{p}_{\epsilon}^{i} - p_{i}) \right\rVert_1\le \Lambda_i \lambda_i^n.
    \end{equation*}
	
    To deal with the first term, we write it as a telescopic sum
    \begin{equation*}
        (P^{n}-\tilde{P}_{\epsilon}^{n})\tilde{p}_{\epsilon}^{i}=\sum_{k=1}^n P^{n-k}(P-\tilde{P}_{\epsilon})\tilde{p}_{\epsilon}^{i}.
    \end{equation*}
    Then, we assume that there is a sequence $\{d_{\eps}\}_{\eps > 0}$ converging to zero such that, for all $i \in S$,
	\begin{equation}
		\label{oper_close}
		\|(P-\tilde{P}_{\epsilon})\tilde{p}_{\epsilon}^{i}\|_1 \leq d_{\eps} \, \|\tilde{p}_{\epsilon}^{i}\| ,
	\end{equation}
	and that the BV norm $\|\tilde{p}_{\epsilon}^{i}\|$ is bounded uniformly in $\epsilon$, that is, $\sup_\epsilon \lVert \tilde{p}_{\epsilon}^{i} \rVert \leq M$. Then, by the $L^1$-contraction of the transfer operator, it follows that
	\begin{equation}
        \label{ty}
		\|\tilde{p}_{\epsilon}^{i} - p_{i}\|_1\le \Lambda_i \, \lambda_i^n + n \, M \, d_{\eps}.
	\end{equation}
	By minimising the right-hand side of \eqref{ty} on $n$, we conclude there exists a constant $C > 0$ such that
    \begin{align*}
        d_{TV}(\tilde{\mu}_{\epsilon}^{i},\mu_{i}) \leq C \, d_{\eps} \log d_{\eps}^{-1}.
    \end{align*}
	We have proved the following theorem.
	
	\begin{theorem}
		\label{prop_dTV_final}
		Assume the conditions of Proposition \ref{prop_not_jump} and Theorem \ref{main_theorem} are in force. If the process $\xi_{\epsilon}(\cdot)$ is $\mcL$-metastable and, for all $i \in S$, \eqref{oper_close} holds and $\lVert \tilde{p}_{\epsilon}^{i} \rVert$ is uniformly bounded in $\epsilon$, then 
		\begin{align*}
			d_{TV}(\mu_{\epsilon},\mu) \leq C \, \left[q_{\eps} \, \log q_{\eps}^{-1} + d_{\eps} \log d_{\eps}^{-1}\right] + \sum_{i \in S} \left|\mu_{\eps}(I_{i}) - \pi(i)\right|
		\end{align*}
        for a constant $C > 0$ that depends on quantities in \textbf{(A1)}-\textbf{(A7)} and on the upper bound of $\lVert \tilde{p}_{\epsilon}^{i} \rVert$.
	\end{theorem}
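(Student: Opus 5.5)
The plan is to assemble Theorem \ref{prop_dTV_final} from three ingredients: Proposition \ref{theorem_dTV}, the decomposition of $d_{TV}(\tilde{\mu}_{\eps},\mu)$ displayed just before the statement, and the perturbative bound \eqref{ty}. Since the hypotheses include those of Proposition \ref{prop_not_jump} and Theorem \ref{main_theorem} together with $\mcL$-metastability, Proposition \ref{theorem_dTV} applies directly. It gives
\begin{align*}
d_{TV}(\mu_{\epsilon},\mu) \leq C_{1}\, q_{\eps}\log q_{\eps}^{-1} + d_{TV}(\tilde{\mu}_{\eps},\mu).
\end{align*}
The second term is then controlled by the inequality already derived in the text:
\begin{align*}
d_{TV}(\tilde{\mu}_{\eps},\mu) \leq \sum_{i \in S}\pi(i)\, d_{TV}(\tilde{\mu}_{\epsilon}^{i},\mu_{i}) + \sum_{i\in S}|\mu_{\eps}(I_{i})-\pi(i)|.
\end{align*}
Here I use $\tilde{\mu}_{\epsilon}^{i}(B)\le 1$ and regroup. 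Strictly, the mixed term arises from $\mu_\eps(I_i)\tilde\mu^i_\eps - \pi(i)\mu_i$, so a cleaner split is $\mu_\eps(I_i)(\tilde\mu^i_\eps-\mu_i) + (\mu_\eps(I_i)-\pi(i))\mu_i$; either way the coefficients are at most one.

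Next, for each $i$, I bound $d_{TV}(\tilde{\mu}_{\epsilon}^{i},\mu_{i})$. Invariance gives $\tilde{p}^i_\eps=\tilde P^n_\eps\tilde p^i_\eps$ and $p_i=P^np_i$, so
\begin{align*}
\|\tilde p^i_\eps-p_i\|_1\le \|(\tilde P^n_\eps-P^n)\tilde p^i_\eps\|_1+\|P^n(\tilde p^i_\eps-p_i)\|_1 .
\end{align*}
The second term has zero-mean argument, so \eqref{sm} gives at most $\Lambda_i\lambda_i^n$. A minor point is that \eqref{sm} is stated with a constant independent of $\|F\|$; really it should scale with $\|\tilde p^i_\eps-p_i\|\le M+\|p_i\|$, which only enlarges the constant. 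The first term is handled by the telescoping sum together with $L^1$-contraction of $P$ and \eqref{oper_close}, giving at most $n M d_\eps$. This yields \eqref{ty}.

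Finally, I choose $n=\lceil \log d_\eps^{-1}/\log\lambda_i^{-1}\rceil$, which gives $\Lambda_i\lambda_i^n\le \Lambda_i d_\eps$ and $nMd_\eps\le C'd_\eps\log d_\eps^{-1}$. For small $\eps$ we have $d_\eps\le d_\eps\log d_\eps^{-1}$, so the total is at most $C''d_\eps\log d_\eps^{-1}$. Summing over the finitely many $i$ with weights $\pi(i)\le1$ and taking $C$ as the maximum of the constants gives the claimed bound. The constant depends only on $\Lambda_i,\lambda_i,M$ and the constant of Proposition \ref{theorem_dTV}.

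The only delicate point is the scaling of \eqref{sm} with the BV norm of the argument, which the uniform bound $M$ resolves. Everything else is bookkeeping, since the substantive work sits in Proposition \ref{theorem_dTV}.
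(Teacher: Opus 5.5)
Your proposal is correct and follows the paper's own argument. You invoke Proposition \ref{theorem_dTV} to reduce to $d_{TV}(\tilde{\mu}_{\eps},\mu)$, split that into $\sum_i \pi(i)\, d_{TV}(\tilde{\mu}^i_{\eps},\mu_i)$ plus $\sum_i|\mu_{\eps}(I_i)-\pi(i)|$, and bound each component via the telescoping estimate \eqref{ty} optimised over $n$. Two side remarks: your observation that \eqref{sm} must carry a factor of the BV norm of its argument (here at most $M+\lVert p_i\rVert$) is a valid repair that the paper leaves implicit, and the paper's split $\pi(i)(\tilde{\mu}^i_{\eps}-\mu_i)+(\mu_{\eps}(I_i)-\pi(i))\tilde{\mu}^i_{\eps}$ is already an exact identity, so your alternative split is not needed, though it also works.
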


    As a corollary, since $\left|\mu_{\eps}(I_{i}) - \pi(i)\right|$ converges to zero as $\eps \to 0$ by condition $\mathfrak{C}_{\mcL}$ and \textbf{(A5)}, we conclude that $\mcL$-metastability implies the stochastic stability in this case. In particular, the resolvent condition $\mathfrak{R}_{\mcL}$ implies the stochastic stability.

    \begin{corollary}
        \label{cor_stable}
        Assume the conditions of Proposition \ref{prop_not_jump} and Theorem \ref{main_theorem} are in force. If the process $\xi_{\epsilon}(\cdot)$ is $\mcL$-metastable, then it is stochastically stable, that is, $\mu_{\eps}$ converges strongly to a convex combination of $\mu_{i}$.
    \end{corollary}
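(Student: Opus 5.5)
The corollary is essentially a repackaging of Theorem \ref{prop_dTV_final}, with the metastability condition $\mathfrak{C}_{\mcL}$ used to send the weights $\mu_{\eps}(I_i)$ to $\pi(i)$. The plan is therefore to show that each term on the right-hand side of Theorem \ref{prop_dTV_final} tends to zero as $\eps \to 0$. One caveat: Theorem \ref{prop_dTV_final} additionally assumes \eqref{oper_close} and the uniform bound $\sup_\eps \lVert \tilde{p}_{\epsilon}^{i} \rVert \leq M$. I would first derive these from the hypotheses of Theorem \ref{main_theorem}, or else fall back on a softer argument for the term $d_{TV}(\tilde{\mu}^i_\eps,\mu_i)$.

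\emph{Step 1: the jump term.} The term $q_\eps\log q_\eps^{-1}$ tends to zero because $q_\eps\to0$ by \eqref{barrier} in \textbf{(A3)}. By Proposition \ref{theorem_dTV}, it then suffices to show $d_{TV}(\tilde{\mu}_{\eps},\mu)\to 0$.

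\emph{Step 2: the weights.} Using the decomposition displayed before Theorem \ref{prop_dTV_final}, this reduces to two facts:
\begin{equation*}
d_{TV}(\tilde{\mu}^i_\eps,\mu_i)\to0 \quad\text{and}\quad \mu_\eps(I_i)\to\pi(i) \qquad \text{for each } i \in S.
\end{equation*}
For the second, I would use the computation at the start of Section \ref{SecCS}. By \textbf{(A5)}, $\mu_\eps(\Delta^\eps_i)\to0$. Ergodicity gives $\mu_\eps(\mcE^\eps_i)=\lim_t\boldsymbol P^\eps_x[\xi_\eps(t)\in\mcE_i^\eps]$. Condition $\mathfrak{C}_{\mcL}$, together with $\mathfrak D$ (so that $T^{\mcE^\eps}(t)/t\to1$ in expectation), identifies this limit with the stationary law $\pi(i)$ of the irreducible chain generated by $\mcL$.

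\emph{Step 3: the restricted measures (the main obstacle).} Here I would avoid \eqref{oper_close} and use the Keller--Liverani stability already invoked in Theorem \ref{main_theorem}. Conditions (a) and (b) give a uniform Lasota--Yorke inequality and closeness in the triple norm $\sup_{\lVert F\rVert\le1}\lVert(\tilde P_\eps-P)F\rVert_1$. The latter is what \cite{KL2} requires; (b) as stated only controls integrals, so I would need to verify that the intended closeness is in $L^1$. Under these, the spectral projectors onto the eigenvalue $1$ converge in the triple norm, so $\lVert\tilde p^i_\eps-p_i\rVert_1\to0$, since both are normalized fixed points and $1$ is a simple eigenvalue for $P$ by \textbf{(A7)}.

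As a backup route, apply (a) with $F=\tilde p^i_\eps$ and let $n\to\infty$ to get $\lVert\tilde p^i_\eps\rVert\le D$ uniformly. Helly's compactness then gives $L^1$-subsequential limits $p^*$. Passing to the limit in $\tilde P_\eps\tilde p^i_\eps=\tilde p^i_\eps$ using the triple-norm closeness yields $Pp^*=p^*$, and uniqueness of the ACIM in \textbf{(A7)} gives $p^*=p_i$.

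Combining Steps 1--3 gives $d_{TV}(\mu_\eps,\mu)\to0$, i.e.\ strong convergence of $\mu_\eps$ to $\sum_i\pi(i)\mu_i$. The delicate point is Step 3: justifying that hypothesis (b) supplies enough closeness to pass to the limit in the fixed-point equation.
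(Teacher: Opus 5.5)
Your proposal is correct. Steps 1 and 2 are the paper's argument: the paper obtains the corollary directly from Theorem~\ref{prop_dTV_final}, together with $|\mu_\eps(I_i)-\pi(i)|\to0$ from $\mathfrak{C}_{\mcL}$ and \textbf{(A5)}.

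The genuine difference is Step 3. The paper controls $d_{TV}(\tilde\mu^i_\eps,\mu_i)$ through the quantitative bound \eqref{ty}. That bound uses \eqref{oper_close} and $\sup_\eps\lVert\tilde p^i_\eps\rVert<\infty$, which are hypotheses of Theorem~\ref{prop_dTV_final} but are not listed in the corollary; the paper bridges this only with the words ``in this case''. You instead get qualitative convergence $\lVert\tilde p^i_\eps-p_i\rVert_1\to0$ from (a) and (b) alone, in one of two ways. Either use Keller--Liverani convergence of the rank-one projectors, applying $\Pi_\eps-\Pi_0$ to the normalized constant function and using $\Pi_\eps F=\tilde p^i_\eps\int F$. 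Or use Helly compactness plus uniqueness of the ACIM in \textbf{(A7)}. Both arguments are sound.

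Your route matches the corollary's hypotheses exactly but gives no rate, whereas the paper's gives $d_\eps\log d_\eps^{-1}$. Your backup route in fact removes the discrepancy. Applying (a) to $\tilde p^i_\eps=\tilde P_\eps^n\tilde p^i_\eps$ and letting $n\to\infty$ gives $\lVert\tilde p^i_\eps\rVert\le D$. The triple-norm form of (b) gives \eqref{oper_close} with $d_\eps=\sup_{\lVert F\rVert\le1}\lVert(\tilde P_\eps-P)F\rVert_1$. So Theorem~\ref{prop_dTV_final}, rate included, applies verbatim.

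Your suspicion about (b) is justified. As literally written it is vacuous, because $\int_{I_i}\tilde P_\eps F\,dx=\int_{I_i}F\,dx=\int_{I_i}PF\,dx$ for every $F$. So the $L^1$ reading is forced, and it is what Section~\ref{ex_R1} actually verifies.

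One step needs tightening, in your sketch and equally in the paper. $\mathfrak{C}_{\mcL}$ and $\mathfrak{D}$ describe a fixed time window as $\eps\to0$. By themselves they do not justify exchanging $\lim_{\eps\to0}$ with the $t\to\infty$ limit through which you express $\mu_\eps(\mcE^\eps_i)$. A clean fix uses stationarity at fixed $t$:
\begin{itemize}
\item Write $\mu_\eps(\mcE^\eps_j)=t^{-1}\boldsymbol{E}^\eps_{\mu_\eps}\bigl[\int_0^t\chi_{\mcE^\eps_j}(\xi_\eps(s))\,ds\bigr]$.
\item For starting points in $\mcE^\eps$, replacing the integrand by $\chi_{\{j\}}(Y_\eps(s))$ costs at most $\int_0^t\chi_{\Delta^\eps}(\xi_\eps(s))\,ds$, which $\mathfrak{D}$ controls. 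Starting points in $\Delta^\eps$ contribute at most $t\,\mu_\eps(\Delta^\eps)\to0$.
\item The uniformity of $\mathfrak{C}_{\mcL}$ over starting points in $\mcE^\eps_i$ then shows that every limit point $\nu$ of $(\mu_\eps(\mcE^\eps_j))_j$ satisfies $\nu=\nu\,t^{-1}\int_0^t e^{r\mcL}\,dr$ for all $t>0$.
\item Hence $\nu=\pi$ by irreducibility, and $\mu_\eps(I_i)\to\pi(i)$ follows from $\mu_\eps(\Delta^\eps)\to0$.
\end{itemize}
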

    
    \begin{remark}
        In cases in which there is a \textit{symmetry} between the components $I_{i}$, as in the examples in Section \ref{Sec_applications}, it holds $\mu_{\eps}(I_{i}) = \pi(i)$ for all $i \in S$ and $\eps > 0$, so the bound in Theorem \ref{prop_dTV_final} depends solely on $q_{\eps}$ and $d_{\eps}$. In other cases, a more detailed analysis of $\mu_{\eps}(I_{i})$ may be necessary to obtain a meaningful bound.
    \end{remark}

	\section{Example: Expanding maps with two wells}
	\label{Sec_applications}
	
	The first example we consider is that of a Markov process generated by a perturbation of the expanding map with two wells presented in Figure \ref{f1} by an additive uniformly distributed noise. In this case, $I = [0,1]$ with $T$-invariant components $I_{1} = (0,1/2)$ and $I_{2} = (1/2,1)$, and $b > 0$ fixed is a free parameter. We assume that $b$ is small enough, so the absolute value of the slope of the linear components is greater than $2$.
    
    For $\epsilon < b$, we consider the perturbed map $T_{\epsilon}(x) = T(x) + \sigma_{\epsilon}^{x}$ with additive noise $\sigma_{\epsilon}^{x}$ following a uniform distribution in $[-\epsilon,\epsilon^{q}]$ for $x \in (0,1/2)$ and a uniform distribution in $[-\epsilon^{q},\epsilon]$ for $x \in (1/2,1)$ for $q > 2$ fixed. We couple all noises so that, for all $\omega \in \Omega$, $\sigma_{\epsilon}^{x}(\omega) = \sigma_{\epsilon}^{y}(\omega)$ if $x,y$ are in the same component $I_{i}$, and $\sigma_{\epsilon}^{x}(\omega) = - \sigma_{\epsilon}^{y}(\omega)$ if $x$ and $y$ are in different components. For all $\eps > 0$, we consider $B_{1,2}^{\eps}$ and $B_{2,1}^{\eps}$ in the definition of the holes (cf. \eqref{holes}) in a way that $\Delta_{2,1}^{\eps}$ is the translation of $\Delta_{1,2}^{\eps}$ by $1/2$.

    Observe that the noise $\sigma_{\epsilon}^{x}$ has a drift pointing inward to the respective invariant component $I_{i}$, representing an energy barrier that the process has to overcome to leave the well. We consider $b > 0$ as a simplification since it avoids truncation of the noise for $x$ in a neighbourhood of $0$, $1/2$ or $1$, but $b = 0$ could be considered at the cost of more technical details. Moreover, other sub-Gaussian noises could be considered again at the cost of technical details due to truncation.
	
	We will show that \textbf{(A1)}-\textbf{(A7)} hold in this scenario, and that $\mfR^{(1)}$ and $\mfR^{(2)}_{\mcL}$ are in force for the generator $\mcL$ of a symmetric Markov process in $S = \{1,2\}$, so it follows from Theorem \ref{theorem_resolvent} that the Markov process generated by the perturbed map is $\mcL$-metastable.
	
	\subsection{Assumptions \textbf{(A1)}-\textbf{(A7)}}
    \label{ex_Ass}
	
	Clearly, the map $T$ satisfies \textbf{(A1)} and the perturbed map $T_{\epsilon}$ satisfies \textbf{(A2)}. The first part of assumption \textbf{(A3)} is satisfied since $\mbP(|\sigma_{\epsilon}^{x}| > \epsilon) = 0$ for all $x \in I$ and the second condition follows since both $\sup_{x \in I_{1}} \mbP(\sigma_{\epsilon}^{x} > 0)$ and  $\sup_{x \in I_{2}} \mbP(\sigma_{\epsilon}^{x} < 0)$ converge to zero as $\epsilon \to 0$. 
	
	Assumption \textbf{(A4)} follows from Proposition \ref{prop_ACIM_expand} in Section \ref{SecAux} that implies the uniqueness of the ACIM of the Markov chain $X_{n}^{\eps}$ since $\inf_{x \in I_{ij}} |T^{\prime}(x)| \geq 2$ and $T(I_{ij}) = T(I_{ij^{\prime}})$, recalling from \textbf{(A1.1)} the decomposition $I_{i} = \bigcup_{j = 1}^{4} I_{ij}$ in which $I_{ij}$ are intervals in which $T$ is one-to one. Furthermore, the proof of Proposition \ref{prop_ACIM_expand} implies that \eqref{cond_unique_ACIM} holds in this case, so the Markov chain $X_{n}^{\eps}$ is aperiodic. 
    
    We turn to \textbf{(A5)}. Let
    \begin{equation*}
        P_{\eps}F(x) = \int_{I} F(y) \rho_{\eps}(y,x) \, dy
    \end{equation*}
    be the adjoint of the Markov operator associated with $\rho_{\eps}$, and observe that, analogously to \eqref{randomTrans}, it holds
    \begin{equation*}
        P_{\eps}F(x) = \int_{\Omega} (P_{\eps,\omega}F)(x) \, d\mbP(\omega)
    \end{equation*}
    in which $P_{\eps,\omega}$ is the transfer operator of the map $T_{\eps}(\omega,x) = T(x) + \sigma_{\epsilon}^{x}(\omega)$. We recall, see (\ref{LLYY}), that the operator $P_{\eps,\omega}$ satisfies the LY inequality if there exist $0 < \gamma < 1$ and $C,D > 0,$ such that for all $F \in BV$ and $n \ge 1$ it holds 
	\begin{equation}
		\label{LY}
		\lVert P_{\eps,\omega}^nF \rVert \le C \gamma^n \lVert F \rVert + D\lVert F \rVert_1.
	\end{equation}
    It is enough to prove such an inequality for a given $n_0 > 0$ with the term multiplying $\lVert F \rVert,$ say $a_{n_0},$ strictly lesser than $1,$ and the inequality \eqref{LY} follows by a standard computation. 

    For $\omega \in \Omega$, let $I_{\omega}^{(l)}, l = 1,\dots,8$, be the intervals in which the map $T_{\eps}(\omega,\cdot)$ is one-to-one. We note that $T_{\eps}(\omega,\cdot)$ has a same constant derivative with absolute value $s > 2$ in the interior of each interval $I^{(l)}_{\omega}$, and it can be smoothly extended to the boundaries of these intervals. Setting 
    \begin{equation*}
    	 \ell := \inf_l \text{Leb}(T_{\eps}(\omega,I^{(l)}_{\omega})) = \frac{1}{2} - b,
    \end{equation*}
    since the second derivative of $T_{\eps}(\omega,\cdot)$ is zero on $I^{(l)}_{\omega}$, we can apply Proposition 4.1 in \cite{Broise} to conclude that, for $F \in BV$,
    \begin{equation}
    	\label{BPF}
		\lVert P_{\eps,\omega} \rVert \leq \frac{2}{s} \lVert F \rVert + \frac{2}{\ell} \ \lVert F \rVert_1,
    \end{equation}
	yielding \eqref{LY} with the same parameters $C = 1$, $\gamma = 2/s$ and $D = 2/\ell$ for all $\omega \in \Omega$. By an inequality analogous to \eqref{norm_P_Ptilde}, it follows from \eqref{BPF} that
    \begin{equation}
		\label{LY2}
		\lVert P_{\eps}F \rVert \le \gamma \lVert F \rVert + D\lVert F \rVert_1.
	\end{equation}
    
    Taking $F = p_{\eps}$ in \eqref{LY2}, recalling that $p_{\eps}$ is the density function of $\mu_{\eps}$, and hence a fixed point of $P_{\eps}$, we have that
    \begin{equation}
    	\label{bbdd}
    	\|p_{\eps}\| \le \frac{D}{1 - \gamma}
    \end{equation}
    as $\lVert p_{\eps} \rVert_{1} = 1$. Since $\lVert p_{\eps}\rVert_{\infty} \le \lVert p_{\eps} \rVert$ by \eqref{BV_inf}, 
    \begin{equation*}
        \mu_{\eps}(\Delta^{\eps}) = \int_{\Delta^{\eps}} p_{\eps}(x) \, dx \leq \lVert p_{\eps}\rVert_{\infty} \, \text{Leb}(\Delta^{\eps}) \leq \frac{D}{1 - \gamma} \, \text{Leb}(\Delta^{\eps})\, ,
    \end{equation*}
    so $\lim_{\epsilon \to 0} \mu_{\epsilon}(\Delta^{\eps}) = 0$ since $\text{Leb}(\Delta^{\eps}) \to 0$ when $\eps \to 0$ (cf. \eqref{lim_Leb_holes}). By symmetry, we conclude that $\lim_{\epsilon \to 0} \mu_{\epsilon}(\mcE_{i}^{\epsilon}) = 1/2$, so \textbf{(A5)} holds.
	 
	Assumption \textbf{(A6)} clearly holds, since $T$ is piecewise linear. The proof of \textbf{(A7)} consists of showing that $T_{i} \coloneqq T|_{I_{i}}, i = 1,2,$ admits only one absolutely continuous invariant mixing measure. We will consider $T_{1}$ and an analogous proof holds for $T_{2}$ by symmetry. To easy notation, from now on, we denote simple $T \coloneqq T_1$ and by $P: BV(I_{1}) \mapsto BV(I_{1})$ the transfer operator defined in \eqref{transfer_oper} restricted to functions in $BV(I_{1})$.
	
	The standard proof in the case of expanding maps is to show that the LY inequality holds, and that $1$ is the maximal eigenvalue of $P$ and there is no other eigenvalue of modulus one (with an abuse of language, we call these spectral properties {\em quasi-compactness}), see for instance \cite{Hennion}. It is clear that $T$ satisfies the LY inequality \eqref{BPF} by the same arguments of $T_{\eps,\omega}$. In order to prove that $1$ is the only eigenvalue of $P$ on the unit circle, it will be enough to show that $T$ and its iterates admit only one ergodic ACIM.  
    
    Adapting to our case the proof of Proposition III.4 in \cite{Collet}, we proceed by contradiction, assuming that $T$ has two absolutely continuous invariant ergodic measures with densities $p_1, p_2$. As argued in the aforementioned Proposition III.4, we can find two disjoint open intervals $I_{1,1}, I_{1,2} \subset I_{1}$, and a positive number $c$, such that $p_1|_{I_{1,1}} > c$ and $p_2|_{I_{1,2}} > c$.
	
	Then, we observe that the set $\{T^{-n}(1/4)\}_{n\ge 1}$ of the preimages of the middle point is dense in $[b,1/2]$. Indeed, otherwise, we could find an interval whose orbit never intersects $1/4$, and therefore it is expanded indefinitely. In other words, at each iteration of $T$ applied to an interval of $(0,1/2)$, its size increases, so after some finite number of iterations, it should cover $(b,1/2)$, and hence contains $1/4$.
    
    Suppose $x_2 \in I_{1,2}$ is an element of $T^{-n'}(1/4)$. Then the sequence $T^{-m}(x_2)$, $m > n'$, will again be dense; take now  $x_1 \in T^{-m}(x_2)\cap I_{1,1}, m>n'.$ Then $T^m$ is continuous in $x_1$ and therefore the set $T^m(I_{1,1})\cap I_{1,2}$ will have positive measure, contradicting Birkhoff's ergodic theorem.  Considering higher iterates  $T^{(k)} \coloneqq T^k$ of $T$, the sequence $\{(T^{(k)})^{-n}(1/4)\}_{n\ge 1}$ will be again dense in $[b,1/2]$ and therefore we can repeat the argument above. This concludes the proof of \textbf{(A7)}.
    
    \subsection{Condition $\mfR^{(1)}$}
    \label{ex_R1}
	
	We will construct a restricted process in each invariant component and prove that it satisfies the conditions of Theorem \ref{main_theorem} so $\mfM$, and consequently $\mfR^{(1)}$, holds. Again, only the case of the first component $I_{1} = (0,1/2)$ will be considered, and the same results follow for the second component by symmetry. Recall that we are denoting $T \coloneqq T|_{I_{1}}$ and $P: BV(I_{1}) \mapsto BV(I_{1})$ the restriction of the transfer operator of $T$ to $I_{1}$.
		
	Observe that $T(x) + \sigma \in I_{1}$ for all $x \in I_{1}$ if $-\eps < \sigma < 0$ and for $x \in I_{1}\setminus([x_{1}(\sigma),x_{2}(\sigma)] \cup [x_{3}(\sigma),x_{4}(\sigma)])$ if $\eps^{q} > \sigma > 0$, in which all $x_{k}(\sigma)$ satisfy $T(x_{k}(\sigma)) + \sigma = 1/2$. Therefore, considering the coupling between the noises $\sigma_{\epsilon}^{x}$ and $\sigma_{\epsilon}^{y}$ for $x,y \in I_{1}$ and denoting by $\sigma_{\epsilon}(\omega)$ the common noise, we define $\tilde{T}_{\epsilon}(\omega,x)$ as
	\begin{align*}
		\begin{cases}
			T(x) + \sigma_{\epsilon}(\omega), &\text{ if } \sigma_{\epsilon}(\omega) < 0\, ,\\	
			T(x) + \sigma_{\epsilon}(\omega), &\text{ if } \sigma_{\epsilon}(\omega) > 0 \text{ and } x \notin [x_{1}(\omega),x_{2}(\omega)] \cup [x_{3}(\omega),x_{4}(\omega)]\, ,\\	
			a_{1}(\omega) \, x - k_{1}(\omega), & 	\text{ if } \sigma_{\epsilon}(\omega) > 0 \text{ and } x \in [x_{1}(\omega),x_{2}(\omega)]\\
            a_{2}(\omega) \, x - k_{2}(\omega), & 	\text{ if } \sigma_{\epsilon}(\omega) > 0 \text{ and } x \in [x_{3}(\omega),x_{4}(\omega)]
		\end{cases}
	\end{align*}
	in which $x_{i}(\omega)$ depends on $\omega$ through $\sigma_{\epsilon}(\omega)$, $a_{1}(\omega)$ and $k_{1}(\omega)$ are the slope and intercept of the line that passes through $(x_{1}(\omega),b)$ and $(x_{2}(\omega),1/2)$, and $a_{2}(\omega)$ and $k_{2}(\omega)$ that of the line that passes through $(x_{3}(\omega),b)$ and $(x_{4}(\omega),1/2)$. We note that
    \begin{equation*}
        x_{1}(\omega) = \frac{1}{8}\left(\frac{1 - 2b - 2\sigma(\omega)}{1 - 2b}\right)
    \end{equation*}
    and, since $x_{1}(\omega) + x_{2}(\omega) = 1/4$,
    \begin{equation}
        \label{difx}
        x_{2}(\omega) - x_{1}(\omega) = 2\left(\frac{1}{8} - x_{1}(\omega)\right) = \frac{\sigma(\omega)}{2 - 4b},
    \end{equation}
    and the same holds for $x_{4}(\omega) - x_{3}(\omega)$.

    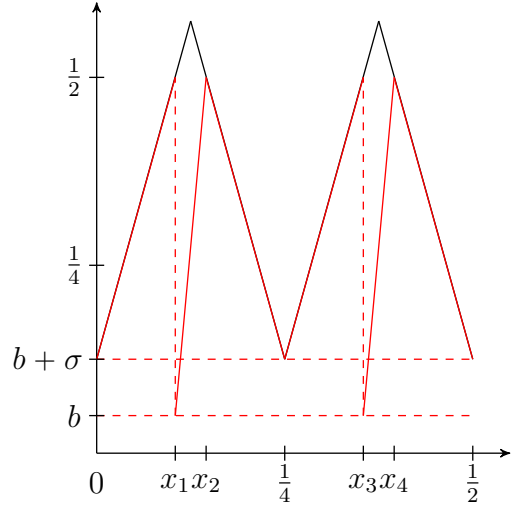
\begin{figure}[ht]
		\centering
		\begin{tikzpicture}[scale=10]
			\node at (0,-0.04) {$0$};
			\node at (0.5,-0.04) {$\frac{1}{2}$};
			\node at (0.25,-0.04) {$\frac{1}{4}$};
            \node at (0.1045,-0.04) {$x_{1}$};
            \node at (0.1455,-0.04) {$x_{2}$};
            \node at (0.3545,-0.04) {$x_{3}$};
            \node at (0.3955,-0.04) {$x_{4}$};
			
			\node at (-0.03,0.5) {$\frac{1}{2}$};
			\node at (-0.03,0.25) {$\frac{1}{4}$};
			\node at (-0.03,0.05) {$b$};
			\node at (-0.065,0.125) {$b + \sigma$};

			\begin{scope}[line width=0.5pt]
				\draw[->] (0,0) to (0.55,0);
				\draw[->] (0,0) to (0,0.6);
				
				
				\draw[-] (0,0.125) to (0.125,0.575);
                \draw[-] (0.125,0.575) to (0.25,0.125);
                \draw[-] (0.25,0.125) to (0.375,0.575);
				\draw[-] (0.375,0.575) to (0.5,0.125);

                \draw[-,red] (0,0.125) to (0.1045,0.5);
                \draw[-,red] (0.1045,0.05) to (0.1455,0.5);
                \draw[-,red,dashed] (0.1045,0.05) to (0.1045,0.5);
                \draw[-,red] (0.1455,0.5) to (0.25,0.125);
                \draw[-,red] (0.25,0.125) to (0.3545,0.5);
                \draw[-,red] (0.3545,0.05) to (0.3955,0.5);
                \draw[-,red,dashed] (0.3545,0.05) to (0.3545,0.5);                
				\draw[-,red] (0.3955,0.5) to (0.5,0.125);

                \draw[-,red,dashed] (0,0.05) to (0.5,0.05); 
                \draw[-,red,dashed] (0,0.125) to (0.5,0.125); 
				
				\draw[-] (0.5,-0.01) to (0.5,0.01);
				\draw[-] (0.25,-0.01) to (0.25,0.01);
				\draw[-] (0.1045,-0.01) to (0.1045,0.01);
				\draw[-] (0.1455,-0.01) to (0.1455,0.01);
                \draw[-] (0.3545,-0.01) to (0.3545,0.01);
				\draw[-] (0.3955,-0.01) to (0.3955,0.01);
				
				\draw[-] (-0.01,0.5) to (0.01,0.5);
				\draw[-] (-0.01,0.25) to (0.01,0.25);
				\draw[-] (-0.01,0.05) to (0.01,0.05);
				\draw[-] (-0.01,0.125) to (0.01,0.125);
			\end{scope}
		\end{tikzpicture}
		\caption{Map $\tilde{T}^{1}_{\epsilon}(\omega,\cdot)$ in red for fixed $\omega \in \Omega$ with $I_{1} = (0,1/2)$ when $T$ is the map in Figure \ref{f1} and $T_{\epsilon}(\omega,\cdot) = T(\cdot) + \sigma(\omega)$ is in black with $\sigma(\omega) > 0$. Observe that the maps $\tilde{T}^{1}_{\epsilon}(\omega,\cdot)$ and $T_{\epsilon}(\omega,\cdot)$ coincide for $x$ satisfying $T_{\epsilon}(\omega,x) \in (0,1/2)$.} \label{f3}
	\end{figure}
    
    This map is depicted in red in Figure \ref{f3} for $\sigma_{\epsilon}(\omega) > 0$. Clearly, the speeded-up Markov process $\tilde{\xi}_{\epsilon}(\cdot)$ generated by $\tilde{T}_{\epsilon}$ satisfies the coupling property \eqref{coupling} so it is a restriction of the process $\xi_{\epsilon}(\cdot)$ to $I_{1}$, and its generator can be written as \eqref{generator_restricted}. Furthermore, it has a unique ACIM (see Remark \ref{rem_restricted}).
	
	Recall that $\tilde{P}_{\epsilon,\omega}$ is the transfer operator of $\tilde{T}_{\epsilon}(\omega,\cdot)$ and that $\tilde{P}_{\epsilon}$ is the adjoint of the Markov operator associated with the Markov chain $\tilde{X}^{\epsilon}_{n}$. We now prove that the operator $\tilde{P}_{\epsilon}$ satisfies the conditions of Theorem \ref{main_theorem}, so $\mfM$ holds.
	
	In view of Remark \ref{rem_LY}, in order to prove condition (a) in Theorem \ref{main_theorem}, it is enough to prove a suitable LY inequality for all $\omega \in \Omega$ and $\epsilon > 0$. By the same arguments that led to \eqref{BPF}, we conclude that, for all $\omega \in \Omega$ and $\eps > 0$,
    \begin{equation}
    	\label{BPF2}
		\lVert \tilde{P}_{\epsilon,\omega} \rVert \leq \frac{2}{s_{\epsilon,\omega}} \lVert F \rVert + \frac{2}{\ell_{\epsilon,\omega}} \ \lVert F \rVert_1
    \end{equation}
    for all $F \in BV$, with $s_{\epsilon,\omega} > 2$ and $\ell_{\epsilon,\omega} \geq 1/2 - b - \eps^{q}$. Therefore, \eqref{exp_LY} holds and (a) is in force.
    
    We turn to condition (b). We will estimate $\int_{I_{1}} (P - \tilde{P}_{\eps})F(x) \, dx$ by comparing directly $P$ with $\tilde{P}_{\eps, \om}$ and we refer the reader to Figure \ref{f3} for a visualisation of the arguments below.
		
	For each $\omega \in \Omega$, we denote by $\iota_1, \iota_2, \iota_3, \iota_4$ the inverse branches of $T$ and by\footnote{We omit the dependence of $\upsilon_1,\upsilon_2,\upsilon_3,\upsilon_4,\upsilon_5,\upsilon_6$ on $\omega$ to ease notation.} $\upsilon_1, \upsilon_2, \upsilon_3, \upsilon_4, \upsilon_5, \upsilon_6$ the inverse branches of $\tilde{T}_{\eps}(\om,\cdot)$, being $\upsilon_2$ and $\upsilon_5$ the extra branches when they exist, that is, when $\sigma(\om) > 0$. These are functions satisfying $T^{-1}(x) = \{\iota_{1}(x),\iota_2(x),\iota_3(x),\iota_4(x)\}$ for $x$ in the image of $T$ and $\tilde{T}^{-1}_{\epsilon}(\om,x) = \{\upsilon_{1}(x),\upsilon_2(x),\upsilon_3(x),\upsilon_4(x),\upsilon_5(x),\upsilon_6(x)\}$ for $x$ in the image of $\tilde{T}_{\epsilon}(\om,\cdot)$ with the convention that $\upsilon_2(x) = \emptyset$ and $\upsilon_5(x) = \emptyset$ when $\sigma(\om) \leq 0$. We denote by $s > 2$ the common absolute value of the slope of $\iota_{1}, \iota_{2},\iota_{3},\iota_{4}, \upsilon_{1}, \upsilon_{3}, \upsilon_{4}$ and $\upsilon_{6}$, and by $s_{2} > 2$ the common slope of $\upsilon_2$ and $\upsilon_{5}$. We first fix $\omega \in \Omega$ with $\eps^{q} > \sigma_{\eps}(\omega) > 0$ and consider two cases.
	
	\underline{Case 1:} $b \le x \le b + \sigma_{\epsilon}(\omega)$. In this case, for $F \in BV$, $\tilde{P}_{\eps,\om}F(x) = [F(\upsilon_{2}(x)) + F(\upsilon_{5}(x))]/s_{2}$ and therefore 
	\begin{align}
		\label{p1} \nonumber
		\int_{b}^{b + \sigma_{\eps}(\omega)}\left|PF(x)-\frac{F(\upsilon_{2}(x)) + F(\upsilon_{5}(x))}{s_{2}}\right|&dx \le  2 \left(\lVert PF \rVert + 2 \, s_{2}^{-1} \lVert F \rVert\right) \, \sigma_{\eps}(\omega) \\
		&\le  2 \, (C \gamma + D + 2 \, s_{2}^{-1}) \, \|F\| \, \epsilon^{q}
	\end{align}
    in which the first inequality is due to \eqref{BV_inf} and the second due to the LY inequality in \textbf{(A7)}.
	
	\hfill$\square$
	
	\underline{Case 2:} $b + \sigma_{\epsilon}(\omega) \le x \le 1/2$. In this case, for $F \in BV$, $|(P-P_{\eps,\om})F(x)|$ is bounded from above by
	\begin{align}
		\label{dif_transfer}
		\sum_{j = 1,3,4,6} \frac{1}{s}[|F(\iota_j(x)) - F(\upsilon_j(x))|] + \sum_{j = 2,5} \frac{1}{s_{2}} |F(\upsilon_{j}(x))|.
	\end{align}
	Integrating in $x$ on both sides of \eqref{dif_transfer}, we note that the contribution of the second term is equal to
	\begin{align}
		\label{p2} \nonumber
		\sum_{j = 2,5} &\int_{b + \sigma_{\epsilon}(\omega)}^{1/2} \frac{|F(\upsilon_j(x))|}{s_{2}} \, dx =\int_{\upsilon_{2}(b + \sigma_{\epsilon}(\omega))}^{x_{2}(\omega)} |F(y)| \, dy + \int_{\upsilon_{5}(b + \sigma_{\epsilon}(\omega))}^{x_{4}(\omega)} |F(y)| \, dy\\
		&\le 2 \, \|F\| \left[(x_{2}(\omega) - x_{1}(\om)) + (x_{4}(\omega) - x_{3}(\om)) \right] \leq  \frac{2 \, \eps^{q}}{1 - 2b} \, \|F\|
	\end{align}
	in which the first inequality is due to \eqref{BV_inf} and the second due to \eqref{difx}.
    
    To treat the first term in \eqref{dif_transfer}, we first observe that the point $\upsilon_j(x)$ is in a $o(\eps^{q})$-neighbourhood of the point $\iota_j(x)$ for $j = 1,3,4,6$, then we use the formula\footnote{This formula states that, for $t > 0$, $\int_{0}^{1} \sup_{|x - y| \leq t} |F(x) - F(y)| \, dx \leq 2 \, t \, |F|_{TV}$.} (3.11) in \cite{Conze} to conclude that, for a constant $C > 0$ that does not depend on $\eps$ that may change from line to line,
    \begin{align}
    \label{pp3}
        \nonumber
        \frac{1}{s} \int_{b + \sigma_{\epsilon}(\omega)}^{1/2} |F(\iota_j(x)) - F(\upsilon_j(x))|dx &\le \frac{1}{s} \int_{0}^{1/2} \sup\limits_{|x - y| \leq C \eps^{q}} |F(x) - F(y)| \, dx \\
        &\le C \eps^{q} \, \|F\|.
    \end{align}
    Therefore, by combining \eqref{p1}, \eqref{dif_transfer}, \eqref{p2} and \eqref{pp3}, we conclude that, if $\sigma(\omega) > 0$ then 
     \begin{equation*}
        \int_{I_{1}} (P - \tilde{P}_{\eps})F(x) \, dx \leq C \, \eps^{q} \, \|F\|.
     \end{equation*}
    With the same arguments of Case 2, we can prove that, if $\sigma(\omega) < 0$, then
    \begin{equation*}
        \int_{I_{1}} (P - \tilde{P}_{\eps})F(x) \, dx \leq C \, \eps \, \|F\|,
     \end{equation*}
     and condition (b) follows. This ends the proof that $\mfM$, and therefore $\mfR^{(1)}$, holds for the perturbed map $T_{\epsilon}$.

    \begin{remark}
        Similar arguments could be considered to prove part (b) if a general sub-Gaussian noise satisfying \eqref{sub_gaussian} was considered instead. For $\lVert F \rVert \leq 1$, denoting $\Omega_{\epsilon} = \{\omega \in \Omega: |\sigma_{\eps}(\omega)| \leq \epsilon \sqrt{\gamma_{2} \log(1/\epsilon)}\}$, $\int_{I_{1}} (P - \tilde{P}_{\eps})F(x) \, dx$ is equal to
    	\begin{align*}
    	   \iint_{I_i \times \Omega_{\epsilon}} (P - \tilde{P}_{\eps, \om})F(x) \, dx \, d\mathbb{P}(\om) + \iint_{I_i\times \Omega_{\epsilon}^{c}} (P - \tilde{P}_{\eps, \om})F(x) \, dx \, d\mathbb{P}(\om).	
    	\end{align*}
    	The absolute value of the second term is bounded by 
    	\begin{equation*}
    		\label{p0}
    		2 \, \lVert F \rVert_{1} \, \mathbb{P}(\Omega_{\epsilon}^{c}) \le 2 \, \gamma_{1} \, \epsilon,
    	\end{equation*}
    	due to \eqref{sub_gaussian} since $\lVert F \rVert_1 \le \Vert F \rVert \leq 1$ and $\lVert PF \rVert_{1} = \lVert \tilde{P}_{\eps, \om}F \rVert_{1} = \lVert F \rVert_{1}$. The first term can be bounded by comparing $P$ with $\tilde{P}_{\eps, \om}$ as above for $\omega \in \Omega_{\eps}$.
    \end{remark}
	
	\subsection{Condition $\mfR_{\mcL}^{(2)}$}
    \label{ex_R2}
	
	That \eqref{H0} holds in this case is direct, since \eqref{timeScale} clearly holds due to the symmetry of the components of the original map $T$ and of the noise $\sigma_{\epsilon}$. Furthermore, $\theta(1,2) = \theta(2,1)$ so the limiting Markov process is symmetric. It follows from Proposition \ref{cor_7.3} that $\mfR_{\mcL}^{(2)}$ is in force. Finally, from Theorem \ref{theorem_resolvent} follows that the Markov process generated by the perturbed map $T_{\epsilon}$ is $\mcL$-metastable.
    
    In particular, the stochastic stability follows from Theorem \ref{prop_dTV_final}. Let
    \begin{equation*}
        \mu = \frac{1}{2} \left(\mu_{1} + \mu_{2}\right),
    \end{equation*}
    recalling that $\mu_{i}$ is the ACIM of $T$ with support contained in $I_{i}$. Observe that, since $q > 2$,
    \begin{equation*}
        q_{\eps} = \frac{\eps^{q}}{\eps + \eps^{q}} \ll \eps.
    \end{equation*}
    We note that \eqref{oper_close} follows with $d_{\eps} = C \eps$ by the proof of condition (b) in Section \ref{ex_R1}, and that $\lVert \tilde{p}_{\eps} \rVert$ is uniformly bounded on $\eps$ follows by the same arguments as \eqref{bbdd} since $\tilde{P}_{\eps}$ satisfies the LY inequality with uniformly bounded parameters (cf. \eqref{BPF2}). By symmetry $1/2 = \mu_{\eps}(I_{i}) = \pi(i)$, so we
    apply Theorem \ref{prop_dTV_final} to obtain
    \begin{equation*}
        d_{TV}(\mu_{\epsilon},\mu) \leq C \, \eps \, \log(\eps^{-1}),
    \end{equation*}
    a bound on the convergence of $\mu_{\eps}$ to the convex combination $\mu$. 

    \subsection{More examples of maps with two wells}

    Other examples could be considered with modifications to the arguments of Sections \ref{ex_Ass}, \ref{ex_R1} and \ref{ex_R2}. On the one hand, the map in Figure \ref{f2} \textbf{(A)} could be treated with minor modifications of the proofs above, since all of its branches have a derivative greater than $2$ and the same image. On the other hand, to apply the theory for the map in Figure \ref{f2} \textbf{(B)}, a more careful analysis is necessary to establish the LY inequalities. While proving assumptions \textbf{(A1)}-\textbf{(A4)} and \textbf{(A6)}, part (b) of Theorem \ref{main_theorem} and \eqref{H0} can be done as in the previous sections, the arguments used to prove the LY inequalities do not hold since the slope of the branches of this map has an absolute value equal to $2$. 

    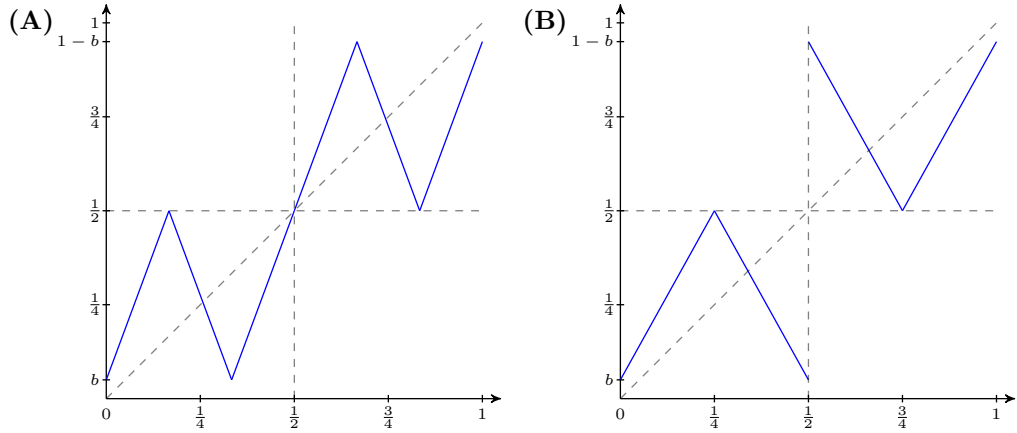
\begin{figure}[ht]
		\centering
        \begin{tikzpicture}[scale=5]
			\tikzstyle{hs} = [circle,draw=black, rounded corners,minimum width=3em, vertex distance=2.5cm, line width=1pt]
			\tikzstyle{hs2} = [circle,draw=black,dashed, rounded corners,minimum width=3em, vertex distance=2.5cm, line width=1pt]
			
			\node at (0,-0.04) {\tiny $0$};
			\node at (1,-0.04) {\tiny $1$};
			\node at (0.5,-0.05) {\tiny $\frac{1}{2}$};
			\node at (0.25,-0.05) {\tiny $\frac{1}{4}$};
			\node at (0.75,-0.05) {\tiny $\frac{3}{4}$};
			
			\node at (-0.03,1) {\tiny $1$};
			\node at (-0.03,0.5) {\tiny $\frac{1}{2}$};
			\node at (-0.03,0.25) {\tiny $\frac{1}{4}$};
			\node at (-0.03,0.75) {\tiny $\frac{3}{4}$};
            \node at (-0.075,0.95) {\tiny $1-b$};
            \node at (-0.03,0.05) {\tiny $b$};
            \node at (-0.2,1) {\footnotesize \textbf{(A)}};
			
			\begin{scope}[line width=0.5pt]
				\draw[->] (0,0) to (1.05,0);
				\draw[->] (0,0) to (0,1.05);
				
				\draw[-,blue] (0,0.05) to (1/6,0.5);
				\draw[-,blue] (1/6,0.5) to (1/3,0.05);
                \draw[-,blue] (1/3,0.05) to (0.5,0.5);
				
				\draw[-,blue] (0.5,0.5) to (2/3,0.95);
				\draw[-,blue] (2/3,0.95) to (5/6,0.5);
                \draw[-,blue] (5/6,0.5) to (1,0.95);
				
				\draw[-] (1,-0.01) to (1,0.01);
				\draw[-] (0.5,-0.01) to (0.5,0.01);
				\draw[-] (0.25,-0.01) to (0.25,0.01);
				\draw[-] (0.75,-0.01) to (0.75,0.01);
				
				\draw[-] (-0.01,1) to (0.01,1);
				\draw[-] (-0.01,0.5) to (0.01,0.5);
				\draw[-] (-0.01,0.25) to (0.01,0.25);
				\draw[-] (-0.01,0.75) to (0.01,0.75);
                \draw[-] (-0.01,0.05) to (0.01,0.05);
                \draw[-] (-0.01,0.95) to (0.01,0.95);
				
				\draw[-,dashed,opacity = 0.5] (0,0.5) to (1,0.5);
				\draw[-,dashed,opacity = 0.5] (0.5,0) to (0.5,1);
				\draw[-,dashed,opacity = 0.5] (0,0) to (1,1);
			\end{scope}
			\end{tikzpicture}
		\begin{tikzpicture}[scale=5]
			\tikzstyle{hs} = [circle,draw=black, rounded corners,minimum width=3em, vertex distance=2.5cm, line width=1pt]
			\tikzstyle{hs2} = [circle,draw=black,dashed, rounded corners,minimum width=3em, vertex distance=2.5cm, line width=1pt]
			
			\node at (0,-0.04) {\tiny $0$};
			\node at (1,-0.04) {\tiny $1$};
			\node at (0.5,-0.05) {\tiny $\frac{1}{2}$};
			\node at (0.25,-0.05) {\tiny $\frac{1}{4}$};
			\node at (0.75,-0.05) {\tiny $\frac{3}{4}$};
			
			\node at (-0.03,1) {\tiny $1$};
			\node at (-0.03,0.5) {\tiny $\frac{1}{2}$};
			\node at (-0.03,0.25) {\tiny $\frac{1}{4}$};
			\node at (-0.03,0.75) {\tiny $\frac{3}{4}$};
            \node at (-0.075,0.95) {\tiny $1-b$};
            \node at (-0.03,0.05) {\tiny $b$};
            \node at (-0.2,1) {\footnotesize \textbf{(B)}};
			
			\begin{scope}[line width=0.5pt]
				\draw[->] (0,0) to (1.05,0);
				\draw[->] (0,0) to (0,1.05);
				
				\draw[-,blue] (0,0.05) to (0.25,0.5);
				\draw[-,blue] (0.25,0.5) to (0.5,0.05);
				
				\draw[-,blue] (0.5,0.95) to (0.75,0.5);
				\draw[-,blue] (0.75,0.5) to (1,0.95);
				
				\draw[-] (1,-0.01) to (1,0.01);
				\draw[-] (0.5,-0.01) to (0.5,0.01);
				\draw[-] (0.25,-0.01) to (0.25,0.01);
				\draw[-] (0.75,-0.01) to (0.75,0.01);
				
				\draw[-] (-0.01,1) to (0.01,1);
				\draw[-] (-0.01,0.5) to (0.01,0.5);
				\draw[-] (-0.01,0.25) to (0.01,0.25);
				\draw[-] (-0.01,0.75) to (0.01,0.75);
                \draw[-] (-0.01,0.05) to (0.01,0.05);
                \draw[-] (-0.01,0.95) to (0.01,0.95);
				
				\draw[-,dashed,opacity = 0.5] (0,0.5) to (1,0.5);
				\draw[-,dashed,opacity = 0.5] (0.5,0) to (0.5,1);
				\draw[-,dashed,opacity = 0.5] (0,0) to (1,1);
			\end{scope}
			\end{tikzpicture}
		\caption{Examples of maps $T$ for which the developed theory could be applied.} \label{f2}
	\end{figure}
    
    In this case, the LY inequality might be proved for higher iterates of the respective maps. Since the second iterate $T^{2}$ is analogous to the map in Figure \ref{f1}, with the difference that the image of the two outer branches is $(c,1/2)$ with $c = T^{2}(0) > b$,  an LY inequality analogous to \eqref{BPF2} holds for $\lVert P^2 \rVert$ and \textbf{(A7)} follows, with the uniqueness of the ACIM established by the same arguments as in Section \ref{ex_Ass}. The LY inequalities for $P_{\eps}$ and $\tilde{P}_{\eps}$ in the proof of \textbf{(A5)} and condition (a) of Theorem \ref{main_theorem}, respectively, can be established with the method described in Remark \ref{rem_LY}. In this case, the restricted map $\tilde{T}_{\eps}$ can be defined analogously to that in Figure \ref{f3}, by adding one linear branch around $x = 1/4$ with image $(b,1/2)$ when the noise is positive.

    For all $\omega_{1},\omega_{2} \in \omega$, the branches of the map $T_{\eps}(\omega_{2},T_{\eps}(\omega_{1},\cdot))$ have slope with absolute value greater than $2$ and images with Lebesgue measure bounded from below uniformly on $\omega_{1},\omega_{2}$, so the LY inequality for $P_{\eps}^{2}$ follows by combining an inequality analogous to \eqref{BPF} with \eqref{exp_LY2}. A more careful analysis is necessary to prove LY for $\tilde{P}_{\eps}$ since, while the slopes of the branches of $\tilde{T}_{\eps}(\omega_{n},\cdot) \circ \cdots \circ \tilde{T}_{\eps}(\omega_{1},\cdot)$ have absolute value greater than $2$ for all $\omega_{1},\dots,\omega_{n} \in \Omega$ and $n \geq 2$, it is necessary to prove that the image of the branches are \textit{not too small} so the second condition in \eqref{exp_LY2} holds. We believe it is possible to prove \eqref{exp_LY2} in this case, for values as low as $n = 2$, but we do not analyse this further for the sake of brevity, since this is not central to this paper as the method has been already illustrated with other maps.

    Throughout this section, and in the next, we considered an expanding map that is piecewise linear for simplicity of exposition. If the branches are not linear, but the map is still expanding and $C^2$, then we could still recover in a similar way the Lasota-Yorke inequality for the unperturbed and perturbed systems and the comparison between the unperturbed and perturbed operators follows in the same way by using standard distortion estimates. In the case $b = 0$, we could consider a multiplicative noise of the form \eqref{multiplicative} and the results should follow with similar arguments considering a suitable distribution for the noise. For the additive noise, the results would also follow with modifications if the perturbed map considered $b = b_{\eps} \to 0$ as $\eps \to 0$ with a suitable rate, while the original map has $b = 0$.
	
	\section{Example: Expanding map with three wells}
    \label{SecEx2}
    
	In this section, we consider a random perturbation of the expanding map with three wells presented in Figure \ref{f4}, with invariant components $I_{1} = (0,1/3), I_{2} = (1/3,2/3)$ and $I_{3} = (2/3,1)$. As in the previous example, the perturbed map is $T_{\eps}(\omega,x) = T(x) + \sigma_{\eps}^{x}(\omega)$ in which $\sigma_{\eps}^{x}$ is uniformly distributed in $[-\epsilon,\eps^{q}]$ for $x \in I_{1}$ and $x \in (\bar{x},2/3)$, and uniformly distributed in $[-\eps^{q},\eps]$ for $x \in I_{3}$ and $x \in (1/3,\bar{x})$ for $q > 2$, in which $\bar{x}$ is the second fixed point of $T$ in $I_{2}$. For each $\omega \in \Omega$ we couple the noise as in the previous section for $x \in I_{1} \cup I_{3}$, and make $T_{\epsilon}(\omega,x) = T(x) + \sigma_{\eps}(\omega)$ for $x \in (1/3,\bar{x})$ and $T_{\epsilon}(\omega,x) = T(x) - \sigma_{\eps}(\omega)$ for $x \in (\bar{x},2/3)$ in which $\sigma_{\eps}$ has a uniform distribution in $[-\eps^{q},\eps]$.

    \begin{figure}[ht]
		\centering
		\begin{tikzpicture}[scale=8.5]
			\tikzstyle{hs} = [circle,draw=black, rounded corners,minimum width=3em, vertex distance=2.5cm, line width=1pt]
			\tikzstyle{hs2} = [circle,draw=black,dashed, rounded corners,minimum width=3em, vertex distance=2.5cm, line width=1pt]
			
			\node at (0,-0.04) {$0$};
			\node at (1,-0.04) {$1$};
			\node at (1/3,-0.05) {$\frac{1}{3}$};
			\node at (2/3,-0.05) {$\frac{2}{3}$};
            \node at (0.472,-0.05) {$\bar{x}$};
			
			\node at (-0.03,1) {$1$};
			\node at (-0.03,1/3) {$\frac{1}{3}$};
			\node at (-0.03,2/3) {$\frac{2}{3}$};

            \node at (1/6,-0.05) {\color{teal} $\mc{E}_{1}^{\eps}$};
            \node at (1/12,0.03) {\color{red} $\Delta_{1,2}^{\eps}$};

            \node at (1/3+1/6+0.025,-0.05) {\color{cyan} $\mc{E}_{2}^{\eps}$};
            \node at (5/12,0.03) {\color{purple} $\Delta_{2,1}^{\eps}$}; 
            \node at (0.55,0.03) {\color{olive} $\Delta_{2,3}^{\eps}$}; 
            
            \node at (2/3+1/6,-0.05) {\color{violet} $\mc{E}_{3}^{\eps}$};
            \node at (9/12,0.03) {\color{orange} $\Delta_{3,2}^{\eps}$}; 
			
			\begin{scope}[line width=0.5pt]
				\draw[->] (0,0) to (1.05,0);
				\draw[->] (0,0) to (0,1.05);
				
				\draw[-,blue] (0,0.05) to (1/12,1/3);
                \draw[-,blue] (1/12,1/3) to (2/12,0.05);
                \draw[-,blue] (2/12,0.05) to (3/12,1/3);
				\draw[-,blue] (3/12,1/3) to (1/3,0.05);
				
				\draw[-,blue] (8/12,0.95) to (9/12,2/3);
				\draw[-,blue] (9/12,2/3) to (10/12,0.95);
                \draw[-,blue] (10/12,0.95) to (11/12,2/3);
                \draw[-,blue] (11/12,2/3) to (1,0.95);
				
				\draw[-,blue] (1/3,0.635) to (5/12,1/3);
				\draw[-,blue] (5/12,1/3) to (0.55,2/3);
				\draw[-,blue] (0.55,2/3) to (2/3,0.375);
				
				\draw[-] (1,-0.01) to (1,0.01);
				\draw[-] (1/3,-0.01) to (1/3,0.01);
				\draw[-] (2/3,-0.01) to (2/3,0.01);
                \draw[-] (0.472,-0.01) to (0.472,0.01);
				
				\draw[-] (-0.01,1) to (0.01,1);
				\draw[-] (-0.01,1/3) to (0.01,1/3);
				\draw[-] (-0.01,2/3) to (0.01,2/3);
				
				\draw[-,dashed,opacity = 0.5] (0,1/3) to (1,1/3);
				\draw[-,dashed,opacity = 0.5] (0,2/3) to (1,2/3);
				\draw[-,dashed,opacity = 0.5] (1/3,0) to (1/3,1);
				\draw[-,dashed,opacity = 0.5] (2/3,0) to (2/3,1);
				\draw[-,dashed,opacity = 0.5] (0,0) to (1,1);
                \draw[-,dashed,opacity = 0.5] (0.472,0) to (0.472,1);

                \draw[-,teal,line width=2pt] (0,0) to (1/3,0);
                \draw[-,cyan,line width=2pt] (1/3,0) to (2/3,0);
                \draw[-,violet,line width=2pt] (2/3,0) to (1,0);

                \draw[-,red,line width=2pt] (1/12-1/36,0) to (1/12+1/36,0);
                \draw[-,red,line width=2pt] (3/12-1/36,0) to (3/12+1/36,0);

                \draw[-,orange,line width=2pt] (9/12-1/36,0) to (9/12+1/36,0);
                \draw[-,orange,line width=2pt] (11/12-1/36,0) to (11/12+1/36,0);

                \draw[-,purple,line width=2pt] (5/12-1/36,0) to (5/12+1/36,0);
                \draw[-,olive,line width=2pt] (0.55-1/36,0) to (0.55+1/36,0);
			\end{scope}
		\end{tikzpicture}
		\caption{Example of a map $T$ that satisfies \textbf{(A1)} with an illustration of the holes and metastable wells.} \label{f4}
	\end{figure}
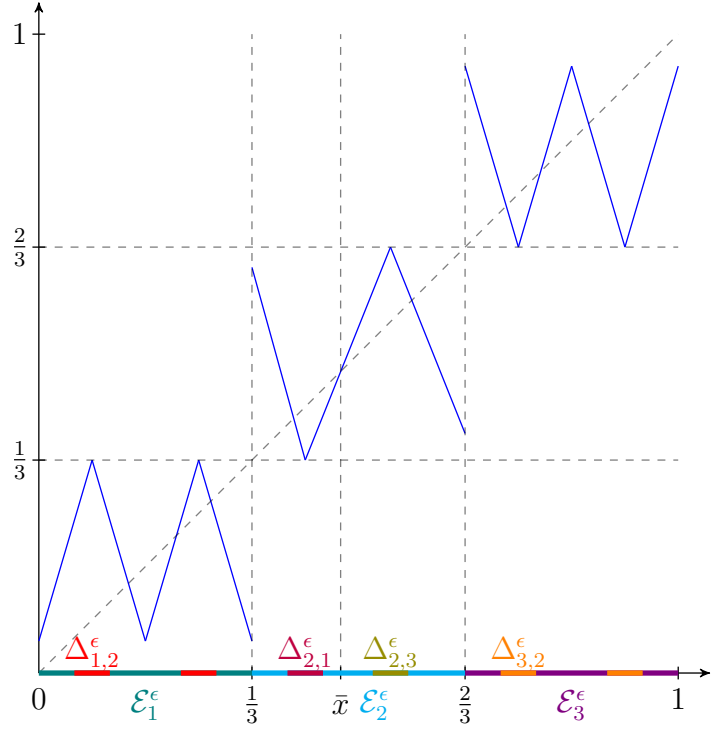

    Assumptions \textbf{(A1)}-\textbf{(A7)} clearly hold for this map by the sames arguments of the previous section\footnote{See Remark \ref{rem_mid_branch} for a discussion of why \textbf{(A4)} holds in this case.}. In particular, the LY inequalities for $P_{\eps}$, and for $P$ restricted to $BV(I_{i})$, hold since the slope of all branches have an absolute value greater than $2$ and their image are uniformly bounded from below. 

    In order to prove $\mathfrak{R}^{(1)}$, we can again apply Theorem \ref{main_theorem}. Conditions (a) and (b) are satisfied by the restricted map defined as that in Figure \ref{f3} in the components $I_{1}$ and $I_{3}$, with the same proof as the previous section. For the component $I_{2}$, similarly we consider $\tilde{T}^{i}_{\eps}$ for $i = 2$ as the map given by adding an extra branch when $T(x) + \sigma(\omega) \notin I_{2}$ as depicted in Figure \ref{f5}. Conditions (a) and (b) then follow by analogous arguments as the previous example, so the details are omitted, and $\mf{R}^{(1)}$ is in force.

    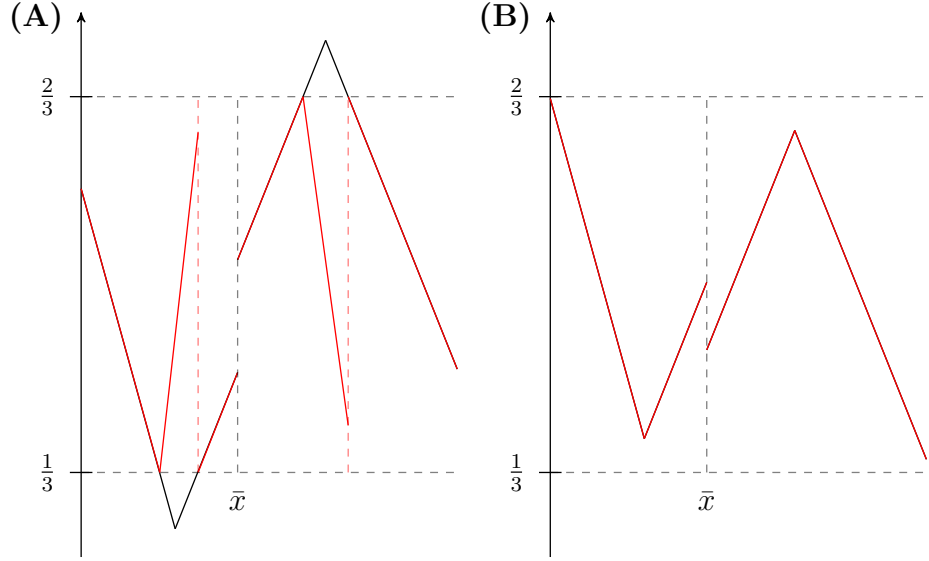
\begin{figure}[ht]
		\centering
		\begin{tikzpicture}[scale=15]
			\tikzstyle{hs} = [circle,draw=black, rounded corners,minimum width=3em, vertex distance=2.5cm, line width=1pt]
			\tikzstyle{hs2} = [circle,draw=black,dashed, rounded corners,minimum width=3em, vertex distance=2.5cm, line width=1pt]
			
			\node at (0.472,1/3-0.025) {$\bar{x}$};
            \node at (-0.03+1/3,1/3) {$\frac{1}{3}$};
			\node at (-0.03+1/3,2/3) {$\frac{2}{3}$};
            \node at (-0.04+1/3,2/3+0.07) {\textbf{(A)}};
			
			\begin{scope}[line width=0.5pt]
                \draw[->] (1/3,1/3-0.075) to (1/3,2/3+0.075);
                \draw[-] (-0.01+1/3,1/3) to (0.01+1/3,1/3);
				\draw[-] (-0.01+1/3,2/3) to (0.01+1/3,2/3);
                

                \draw[-] (1/3,0.635-0.05) to (5/12,1/3-0.05);
                \draw[-] (5/12,1/3-0.05) to (0.472,0.472-0.05);
                \draw[-] (0.472,0.472+0.05) to (0.55,2/3+0.05);
                \draw[-] (0.55,2/3+0.05) to (2/3,0.375+0.05);

                \draw[-,red] (1/3,0.635-0.05) to (0.403,1/3);
                \draw[-,red] (0.403,1/3) to (0.437,0.635);
                \draw[-,red] (0.437,1/3) to (0.472,0.472-0.05);
                \draw[-,red] (0.472,0.472+0.05) to (0.53,2/3);
                \draw[-,red] (0.53,2/3) to (0.57,0.375);
                \draw[-,red] (0.57,2/3) to (2/3,0.375+0.05);
				
				\draw[-,dashed,opacity = 0.5] (1/3,1/3) to (2/3,1/3);
				\draw[-,dashed,opacity = 0.5] (1/3,2/3) to (2/3,2/3);
				\draw[-,dashed,opacity = 0.5] (0.472,1/3) to (0.472,2/3);

                \draw[-,dashed,opacity = 0.5,red] (0.437,1/3) to (0.437,2/3);
                \draw[-,dashed,opacity = 0.5,red] (0.57,1/3) to (0.57,2/3);
			\end{scope}
		\end{tikzpicture}
        \begin{tikzpicture}[scale=15]
			\tikzstyle{hs} = [circle,draw=black, rounded corners,minimum width=3em, vertex distance=2.5cm, line width=1pt]
			\tikzstyle{hs2} = [circle,draw=black,dashed, rounded corners,minimum width=3em, vertex distance=2.5cm, line width=1pt]
			
			\node at (0.472,1/3-0.025) {$\bar{x}$};
            \node at (-0.03+1/3,1/3) {$\frac{1}{3}$};
			\node at (-0.03+1/3,2/3) {$\frac{2}{3}$};
            \node at (-0.04+1/3,2/3+0.07) {\textbf{(B)}};
			
			\begin{scope}[line width=0.5pt]
                \draw[->] (1/3,1/3-0.075) to (1/3,2/3+0.075);
                \draw[-] (-0.01+1/3,1/3) to (0.01+1/3,1/3);
				\draw[-] (-0.01+1/3,2/3) to (0.01+1/3,2/3);
                

                \draw[-] (1/3,0.635+0.03) to (5/12,1/3+0.03);
                \draw[-] (5/12,1/3+0.03) to (0.472,0.472+0.03);
                \draw[-] (0.472,0.472-0.03) to (0.55,2/3-0.03);
                \draw[-] (0.55,2/3-0.03) to (2/3,0.375-0.03);

                \draw[-,red] (1/3,0.635+0.03) to (5/12,1/3+0.03);
                \draw[-,red] (5/12,1/3+0.03) to (0.472,0.472+0.03);
                \draw[-,red] (0.472,0.472-0.03) to (0.55,2/3-0.03);
                \draw[-,red] (0.55,2/3-0.03) to (2/3,0.375-0.03);

                \draw[-,dashed,opacity = 0.5] (1/3,1/3) to (2/3,1/3);
				\draw[-,dashed,opacity = 0.5] (1/3,2/3) to (2/3,2/3);
				\draw[-,dashed,opacity = 0.5] (0.472,1/3) to (0.472,2/3);
			\end{scope}
		\end{tikzpicture}
		\caption{Map $\tilde{T}^{i}_{\epsilon}(\omega,\cdot)$ for $i = 2$ in red for fixed $\omega \in \Omega$ with $I_{2} = (1/3,2/3)$ when $T$ is the map in Figure \ref{f4}. The perturbed map satisfies $T_{\epsilon}(\omega,x) = T(x) + \sigma(\omega)$ for $x \in (1/3,\bar{x})$ and $T_{\epsilon}(\omega,x) = T(x) - \sigma(\omega)$ for $x \in (\bar{x},2/3)$, and it is represented in black for $\sigma(\omega) < 0$ in \textbf{(A)} and for $\sigma(\omega) > 0$ \textbf{(B)}. Observe that the maps $\tilde{T}^{i}_{\epsilon}(\omega,\cdot)$ and $T_{\epsilon}(\omega,\cdot)$ coincide for $x$ satisfying $T_{\epsilon}(\omega,x) \in (1/3,2/3)$.} \label{f5}
	\end{figure}

    \subsection{Condition $\mf{R}_{\mcL}^{(2)}$}

    Due to the lack of symmetry, condition \eqref{H0} is not direct in this case, and we will prove it by showing that \eqref{timeScale} and \eqref{timeScale2} hold. These conditions follow by estimating the probability of the paths of $X_{n}^{\eps}$ in the event $\{H(\mc{E}_{j}^{\eps}) < H^{+}(\check{\mc{E}}_{j}^{\eps}),X_{0}^{\eps} \in \mc{E}_{i}^{\eps}\}$. These are the paths such that $X_{0}^{\eps} \in \mc{E}_{i}^{\eps}$ and $X_{1}^{\eps} \in \mcE_{j}^{\eps}$ or, for a $n \geq 2$,
    \begin{equation}
        \label{valid_paths}
        X_{0}^{\eps} \in \mc{E}_{i}^{\eps}, X_{1}^{\eps} \in \Delta^{\eps},\dots,X_{n-1}^{\eps} \in \Delta^{\eps}, X_{n}^{\eps} \in \mcE_{j}^{\eps}.
    \end{equation}
    Throughout this section, we assume that $\eps > 0$ is small enough for the arguments to hold.

    We start with the case $i = 1$ and $j = 3$ for which $\Delta_{1,3}^{\eps} = \emptyset$.  First, it is clear that, for all $x \in I_{1}$,
    \begin{equation}
        \label{long_jumps}
        \mb{P}_{x}^{\eps}\left[X_{1}^{\eps} \in \mc{E}_{3}^{\eps} \cup \Delta_{2,3}^{\eps} \cup \Delta_{2,1}^{\eps} \cup \Delta_{3,2}^{\eps}\right] = 0.
    \end{equation}
    However, for a path to satisfy \eqref{valid_paths} it is necessary that, at some time $m \geq 1$, a jump from $I_{1} = \mcE_{1}^{\eps} \cup \Delta_{1,2}^{\eps}$ to $\mc{E}_{3}^{\eps} \cup \Delta_{2,3}^{\eps} \cup \Delta_{2,1}^{\eps} \cup \Delta_{3,2}^{\eps}$ occur. Therefore, by the strong Markov property and \eqref{long_jumps}, it holds
    \begin{align}
        \label{dec1} \nonumber
        &\mb{P}_{\mu^{\eps}}^{\eps}\left[H(\mc{E}_{3}^{\eps}) < H^{+}(\check{\mc{E}}_{3}^{\eps}),X_{0}^{\eps} \in \mc{E}_{1}^{\eps}\right] \\ \nonumber
        &= \sum_{m = 0}^{\infty} \mb{P}_{\mu^{\eps}}^{\eps}\left[H(\mc{E}_{3}^{\eps}) < H^{+}(\check{\mc{E}}_{3}^{\eps}),X_{0}^{\eps} \in \mc{E}_{1}^{\eps},X_{m}^{\eps} \in I_{1},X_{m+1}^{\eps} \in \mc{E}_{3}^{\eps} \cup \Delta_{2,3}^{\eps} \cup \Delta_{2,1}^{\eps} \cup \Delta_{3,2}^{\eps}\right]\\ \nonumber
        &\leq \sum_{m = 0}^{\infty} \mb{P}_{\mu^{\eps}}^{\eps}\left[X_{m}^{\eps} \in I_{1},X_{m+1}^{\eps} \in \mc{E}_{3}^{\eps} \cup \Delta_{2,3}^{\eps} \cup \Delta_{2,1}^{\eps} \cup \Delta_{3,2}^{\eps}\right]\\ \nonumber
        &= \sum_{m = 0}^{\infty} \mb{E}_{\mu^{\eps}}^{\eps}\left[\mb{P}_{X_{m}^{\eps}}\left[X_{m+1}^{\eps} \in \mc{E}_{3}^{\eps} \cup \Delta_{2,3}^{\eps} \cup \Delta_{2,1}^{\eps} \cup \Delta_{3,2}^{\eps}\right],X_{m}^{\eps} \in I_{1}\right]\\
        &\leq \sum_{m = 0}^{\infty} \sup\limits_{x \in I_{1}} \mb{P}_{x}\left[X_{1}^{\eps} \in \mc{E}_{3}^{\eps} \cup \Delta_{2,3}^{\eps} \cup \Delta_{2,1}^{\eps} \cup \Delta_{3,2}^{\eps}\right] = 0,
    \end{align}
    so, for instance, condition \eqref{timeScale2} holds for $i = 1$ and $j = 3$ if the denominator is greater than zero for $\eps > 0$. An analogous deduction yields the same result for $i = 3$ and $j = 1$.

    We turn to the cases where $\Delta_{i,j}^{\eps} \neq \emptyset$. Observing that 
    \begin{align}
        \label{hole_unstable}
        \sup\limits_{x \in \Delta^{\eps}} \mb{P}_{x}^{\eps}\left[X_{1}^{\eps} \in \Delta^{\eps}\right] = 0 & & \text{ and } & & \sup\limits_{x \in I_{i}} \mb{P}_{x}^{\eps}\left[X_{1}^{\eps} \in \Delta^{\eps}\setminus\Delta_{i}^{\eps}\right] = 0,
    \end{align}
    we conclude, again by the strong Markov property with a computation analogous to \eqref{dec1}, that the only possible paths \eqref{valid_paths} are those satisfying $X_{1}^{\eps} \in \mcE_{j}^{\eps}$ or $X_{1}^{\eps} \in \Delta_{i,j}^{\eps}$ and $X_{2}^{\eps} \in \mcE_{j}^{\eps}$. On the one hand, $\mb{P}_{\mu^{\eps}}^{\eps}\left[X_{0}^{\eps} \in \mc{E}_{i}^{\eps},X_{1}^{\eps} \in \mcE_{j}^{\eps}\right]$ is equal to
    \begin{align}
        \label{path1}
         \frac{1}{\eps + \eps^{q}} \int_{\mc{E}_{i}^{\eps}} p_{\eps}(x_{0}) \, \left(\eps^{q} - d(T(x_{0}),I_{j})\right) \, \mathds{1}\{d(T(x_{0}),I_{j}) < \eps^{q}\} \, dx_{0}.
    \end{align}
    On the other hand, taking $i = 1$ and $j = 2$, $\mb{P}_{\mu^{\eps}}^{\eps}\left[X_{0}^{\eps} \in \mc{E}_{i}^{\eps},X_{1}^{\eps} \in \Delta_{i,j}^{\eps},X_{2}^{\eps} \in \mcE_{j}^{\eps}\right]$ is equal to
    \begin{align}
        \label{path2} \nonumber
         \left(\frac{1}{\eps + \eps^{q}}\right)^{2} \int_{\mc{E}_{i}^{\eps}} \int_{\Delta_{i,j}^{\eps}}&  p_{\eps}(x_{0}) \, \mathds{1}\{x_{1} \in [T(x_{0}) - \eps,T(x_{0}) + \eps^{q}] \cap \Delta_{i,j}^{\eps}\} \times \\ &\left(\eps^{q} - d(T(x_{1}),I_{2})\right) \, \mathds{1}\{d(T(x_{1}),I_{2}) < \eps^{q}\} \, dx_{1} \, dx_{0}\, ,
    \end{align}
    and an analogous expression follows for other values of $i,j$, possibly considering that the noise is uniformly distributed in $[-\eps^{q},\eps]$ instead. Since $\text{Leb}(\Delta_{i,j}^{\eps}) > 0$, we conclude that \eqref{path2} is greater than zero for all $\eps > 0$. In particular, combining this fact with \eqref{dec1} yields \eqref{timeScale2}.

    In view of \eqref{path1} and \eqref{path2}, if $\Delta_{i,j}^{\eps} \neq \emptyset$, we conclude that the probability $\mb{P}_{\mu_{\eps}}\left[H(\mc{E}_{j}^{\eps}) < H^{+}(\check{\mc{E}}_{j}^{\eps})|X_{0}^{\eps} \in \mc{E}_{i}^{\eps}\right]$, which equals
    \begin{align*}
         \frac{1}{\mu_{\eps}(\mc{E}_{i}^{\eps})} \left[\mb{P}_{\mu^{\eps}}^{\eps}\left[X_{0}^{\eps} \in \mc{E}_{i}^{\eps},X_{1}^{\eps} \in \mcE_{j}^{\eps}\right] + \mb{P}_{\mu^{\eps}}^{\eps}\left[X_{0}^{\eps} \in \mc{E}_{1}^{\eps},X_{1}^{\eps} \in \Delta_{1,2}^{\eps},X_{2}^{\eps} \in \mcE_{2}^{\eps}\right]\right],
    \end{align*}
    varies continuously with the size of $\Delta_{i,j}^{\eps}$, which in turn varies continuously with the size of $B_{i,j}^{\eps}$ in its definition (cf. \eqref{holes}). Therefore, it is possible to choose the neighbourhoods $B_{i,j}^{\eps}$ in \eqref{holes} in a way such that \eqref{timeScale} holds. Hence, with this choice, \eqref{H0} is in force, so $\mathfrak{R}^{(2)}_{\mcL}$ holds, and the process is $\mcL$-metastable in which $\mcL$ is the generator of a Markov process in $S = \{1,2,3\}$ with rates $\theta(1,3) = \theta(3,1) = 0$, and the remaining ones given by the limit \eqref{H0} for the right choice of $\beta_{\eps}$. In particular, $\theta(1,2) = \theta(3,2)$ due to symmetry.
    
    \begin{remark}
        In order to compute $\beta_{\eps}$ or the ratio $\theta(2,1)/\theta(2,3)$ it is in principle necessary to estimate the density $p_{\eps}$, what is not straightforward and is outside the scope of this paper. If the map $T$ restricted to $(1/3,2/3)$ in Figure \ref{f4} was symmetric around $x = 1/2$, then clearly $\theta(2,1) = \theta(2,3)$, but it was considered as asymmetric to make the point that the value of $\theta(i,j)$ is not always obtained directly. Furthermore, due to this asymmetry, it is not direct if $\mu_{\eps}(I_{i}) = \pi(i)$ so, even though the stochastic stability follows from Corollary \ref{cor_stable}, the bound in Theorem \ref{prop_dTV_final} is not meaningful unless $\mu_{\eps}(I_{i})$ could be estimated.
    \end{remark}

    \begin{remark}
        Although condition \eqref{hole_unstable} simplifies the study of the paths in \eqref{valid_paths}, it does not always hold. For example, it fails for the map in Figure \ref{f2} \textbf{(A)}, where a more careful analysis would be needed if there were no symmetry. Considering uniformly distributed noises also simplifies the presentation, but the results should hold with other sub-Gaussian noises, at the cost of more technical details.
    \end{remark}
    
	\section{Final remarks}
	\label{Sec_RM}

    The approach to metastability for one-dimensional random dynamical systems presented in this paper differs substantially from the existing literature. While previous works rely primarily on the spectral analysis of transfer operators, in our approach this spectral theory serves instead to verify the assumptions and prove the sufficient condition $\mfM$ for metastability under the resolvent approach. In particular, the spectral techniques are applied to analyse the mixing properties of restricted maps such as those illustrated in Figures \ref{f3} and \ref{f5}, instead of discontinuous perturbed operators. This considerably simplifies the applications: compare the generality of the assumptions and the technical difficulties, or the lack thereof, of the results in Sections \ref{Sec_applications} and \ref{SecEx2} with \cite{bahsoun2013escape}, the closest paper to ours.  
    
    From the perspective of Markov process theory, this paper extends the resolvent approach to jump processes on uncountable state spaces. In particular, we develop a framework for analysing the metastability of a class of non-reversible processes, a task that is known to be more challenging than in the reversible setting.
    
	The resolvent approach to metastability for random dynamical systems could be generalised in several directions. The first one would be to consider perturbations of piecewise uniformly expanding maps in higher dimensions. Not only the resolvent approach generalises seamlessly to higher dimensions, but these systems have associated Banach spaces that are particularly adapted to the spectral analysis required by the theory. Possible issues could come only from the geometry of the holes and its control under iterations of different maps. 
    
    Another target are hyperbolic diffeomorphisms, eventually with singularities, like billiards. Even in this case, one disposes of very useful (anisotropic) Banach spaces to which apply the spectral stability results. The latter are of great help when the systems are uniformly expanding or hyperbolic. Otherwise, making use of induction or tower schemes, hyperbolicity could be first established on subsystems and then the desired statistical properties could be eventually recovered everywhere else. It was using this technique that metastability was proved for non-uniformly expanding maps of the interval in \cite{bahsoun2011metastability}. Those induction and tower schemes are very flexible, and they could be adapted to random versions of the previous systems as well.

    The theory could also be extended for systems with an attracting fixed point, that would generate a Markov process within the framework of \cite{beltran2010tunneling} and for which suitable sufficient conditions can be deduced based on the resolvent approach, as was done for the critical zero-range process in \cite{landim2021resolvent}. There is also the possibility of extending the approach to deterministic and quenched perturbations aiming to generalise the results of \cite{Dolgopyat_Wright} and \cite{gonzalez2025jumping} for almost all orbits starting in a metastable well instead of on average. The asymptotics of the diffusion coefficient of an observable of bounded variation could also be studied via the resolvent approach.
	
	\FloatBarrier 
	\section{Proof of results}
	\label{Sec_proofs}
	
	\subsection{Proof of Proposition \ref{prob_cone_ACIM}}

    That $X_{n}^{\eps}$ has at least one invariant measure, follows from \cite[Proposition~2]{rosenthal1995minorization}. This proposition states that, if there exists a probability distribution $\nu(\cdot)$, a constant $\vartheta > 0$ and $n_{\eps} < \infty$ such that, for all $x \in I$ and $A \in \mathfrak{B}_{I}$,
    \begin{align}
        \label{minorization}
        \int_{A} \rho_{\eps}^{\eps}(x,y) \, dy \geq \vartheta \, \nu(A),
    \end{align}
    then $X_{n}^{\eps}$ has at least one invariant measure. Inequality \eqref{minorization} is called a \textit{minorisation condition} (see \cite{rosenthal1995minorization} and the references therein for more details). If \eqref{cond_unique_ACIM} holds, then \eqref{minorization} holds with $\nu$ equal to the uniform measure in $A_{\eps}$.

    To show that the invariant measure is actually unique, we first note that \eqref{cond_unique_ACIM} implies
    \begin{align}
		\label{cond_unique_ACIM2}
         \inf_{x \in I, y \in A_{\eps}} \sum_{n = 1}^{\infty} 2^{-n} \, \rho_{\epsilon}^{n}(x,y) > 0.
	\end{align}
    The uniqueness then follows from \cite[Theorem~2]{asmussen2011new}, which implies that: if $X_{n}^{\eps}$ has an invariant measure, \eqref{cond_unique_ACIM2} holds and $\rho_{\eps}(x,y) = \rho_{\eps}(x,y)\chi_{A_{\eps}}(y)$, then it is a positive Harris recurrent Markov chain. Since $X_{n}^{\eps}$ is aperiodic by \textbf{(A4)}, it has then a unique invariant measure by the Aperiodic Ergodic Theorem \cite[Theorem~13.0.1]{meyn2012markov}. That this measure is absolutely continuous follows since the transition measure $\mbM_{\eps}^{x}$ is absolutely continuous by \textbf{(A2)}.
    
	\subsection{Proofs of Section \ref{SecR1}}	
		
	The proof of Proposition \ref{p03} relies on the following lemma, that is Lemma 6.6 of \cite{landim2021resolvent}, for which we present a proof for the sake of completeness. To easy notation, we redefine $\ms{P}_\epsilon(t)$ as the semigroup of the speeded-up process $\xi_{\epsilon}(\cdot)$.
	
	\begin{lemma}
		\label{lemma6.6}
		For all $T > 0$,
		\begin{align*}
			\sup\limits_{0 \leq t \leq T} \sup\limits_{x \in I} \left|F_{\epsilon}(x) - (\ms{P}_\epsilon(t)F_{\epsilon})(x) \right| \leq 2 \, T \, \lVert G \rVert_{\infty}.
		\end{align*}
	\end{lemma}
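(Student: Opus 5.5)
The plan is to combine the resolvent formula \eqref{formula_sol_reseq} with the Markov property of $\xi_{\epsilon}(\cdot)$. Let $\ms{P}_\epsilon(t)$ denote the semigroup of the speeded-up process. The Markov property gives, for $x \in I$ and $t \ge 0$,
\begin{equation*}
(\ms{P}_\epsilon(t)F_{\epsilon})(x) = \boldsymbol{E}_{x}^{\epsilon}\left[\int_{0}^{\infty} e^{-\lambda s} G(\xi_{\epsilon}(t+s)) \, ds\right] = e^{\lambda t}\, \boldsymbol{E}_{x}^{\epsilon}\left[\int_{t}^{\infty} e^{-\lambda r} G(\xi_{\epsilon}(r)) \, dr\right].
\end{equation*}
To justify this, condition on $\mfF_{t}^{\epsilon}$ and use that the shifted process $\xi_{\epsilon}(t+\cdot)$ has law $\boldsymbol{P}^{\epsilon}_{\xi_{\epsilon}(t)}$. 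Fubini applies because $G$ is bounded.

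Subtracting this from $F_{\epsilon}(x)$ writes the difference as two terms:
\begin{equation*}
F_{\epsilon}(x) - (\ms{P}_\epsilon(t)F_{\epsilon})(x) = \boldsymbol{E}_{x}^{\epsilon}\left[\int_{0}^{t} e^{-\lambda r} G(\xi_{\epsilon}(r))\, dr\right] + (1 - e^{\lambda t})\, \boldsymbol{E}_{x}^{\epsilon}\left[\int_{t}^{\infty} e^{-\lambda r} G(\xi_{\epsilon}(r))\, dr\right].
\end{equation*}
The first term is bounded in absolute value by $t\lVert G\rVert_\infty$. The second has absolute value at most $(e^{\lambda t}-1)\,e^{-\lambda t}\lVert G\rVert_\infty/\lambda = (1-e^{-\lambda t})\lVert G\rVert_\infty/\lambda \le t\lVert G\rVert_\infty$, using $1-e^{-u}\le u$. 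Adding the two bounds gives $2t\lVert G\rVert_\infty \le 2T\lVert G\rVert_\infty$ for $t\le T$. The bound is uniform in $x$ and does not depend on $\lambda$ or $\epsilon$, so taking the suprema finishes the proof.

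An equivalent route, if one prefers to avoid path expectations, is to differentiate the semigroup. Since $\frac{d}{ds}\ms{P}_\epsilon(s)F_\epsilon = \ms{P}_\epsilon(s)\mcL_\epsilon F_\epsilon = \ms{P}_\epsilon(s)(\lambda F_\epsilon - G)$, we get $F_\epsilon - \ms{P}_\epsilon(t)F_\epsilon = \int_0^t \ms{P}_\epsilon(s)(G-\lambda F_\epsilon)\,ds$. Then $\lVert \lambda F_\epsilon\rVert_\infty\le \lVert G\rVert_\infty$ from \eqref{formula_sol_reseq}, and $\ms{P}_\epsilon(s)$ is an $L^\infty$-contraction, which again gives $2t\lVert G\rVert_\infty$.

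The only point needing care is the justification of the Markov property or semigroup calculus for this jump process on the uncountable space $I$. The generator $\mcL_\epsilon$ is bounded on $L^\infty(I)$, being $\beta_\epsilon$ times the difference between a Markov kernel and the identity. Hence $\ms{P}_\epsilon(t)=e^{t\mcL_\epsilon}$ is norm-differentiable, and I expect this step to be routine.
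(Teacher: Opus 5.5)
Your proof is correct and follows the paper's argument: use the Markov property to write $\ms{P}_\epsilon(t)F_\epsilon(x)$ as a time-shifted resolvent integral, split at time $t$, and bound each of the two pieces by $t\lVert G\rVert_\infty$ using $1-e^{-\lambda t}\le \lambda t$. Your version bounds the absolute value directly, whereas the paper writes only the upper bound and leaves the symmetric lower bound implicit, and your semigroup alternative $F_\epsilon-\ms{P}_\epsilon(t)F_\epsilon=\int_0^t \ms{P}_\epsilon(s)(G-\lambda F_\epsilon)\,ds$ is also valid since $\mcL_\epsilon$ is bounded for fixed $\epsilon$.
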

	\begin{proof}
		Fix $T > 0$ and $0 < t \leq T$. By the representation \eqref{formula_sol_reseq} of $F_{\epsilon}$ and the definition of semigroup (cf. \eqref{semigroup}) it holds, for $x \in I$,
		\begin{align*}
			(\ms{P}_\epsilon(t)F_{\epsilon})(x) &= \boldsymbol{E}_{x}^{\epsilon}\left[F_{\epsilon}(\xi_{\epsilon}(t))\right] = \boldsymbol{E}_{x}^{\epsilon}\left[\boldsymbol{E}_{\xi_{\epsilon}(t)}^{\epsilon}\left[\int_{0}^{\infty} e^{-\lambda s}G(\xi_{\epsilon}(s)) \ ds\right]\right]\\
			&= \boldsymbol{E}_{x}^{\epsilon}\left[\int_{0}^{\infty} e^{-\lambda s}G(\xi_{\epsilon}(t + s)) \ ds\right].
		\end{align*}
		By a change of variables, $(\ms{P}_\epsilon(t)F_{\epsilon})(x)$ can then be written as
		\begin{align*}
			&\boldsymbol{E}_{x}^{\epsilon}\left[\int_{t}^{\infty} e^{-\lambda s} G(\xi_{\epsilon}(s)) \ ds\right] + \boldsymbol{E}_{x}^{\epsilon}\left[\int_{t}^{\infty} \left[e^{-\lambda (s - t)} - e^{-\lambda s}\right] G(\xi_{\epsilon}(s)) \ ds\right]\\
			&\leq  F_{\epsilon}(x) + \lVert G \rVert_{\infty} \int_{0}^{t} e^{-\lambda s} \ ds + \lVert G \rVert_{\infty} \int_{t}^{\infty} \left[e^{-\lambda (s - t)} - e^{-\lambda s}\right] \ ds\\
			&\leq F_{\epsilon}(x) + t \, \lVert G \rVert_{\infty} + \lVert G \rVert_{\infty} \frac{1 - e^{-\lambda t}}{\lambda} \leq F_{\epsilon}(x) + 2 \, t \, \lVert G \rVert_{\infty}
		\end{align*}
		since $1 - e^{-\lambda t} \leq \lambda t$.
	\end{proof}
	
	\begin{proof}[Proof of Proposition \ref{p03}]
		We start by proving that, for all $i \in S, x \in \mcE_{i}^{\eps}$ and $t > 0$,
		\begin{equation}
			\label{in_sg}
			|(\tilde{\ms{P}}^{i}_\epsilon(t)F_{\epsilon})(x) - (\ms{P}_\epsilon(t)F_{\epsilon})(x)| \leq 2 \, \lVert F_{\epsilon} \rVert_{\infty} \, \bs{P}_{x}^{\epsilon}[H((\mcV_{i}^{\epsilon})^{c}) \leq t].
		\end{equation}
		By the definition of the semigroup (cf. \eqref{semigroup}), $(\ms{P}_\epsilon(t)F_{\epsilon})(x)$ is equal to 
		\begin{align*}
			&\bs{E}_{x}^{\epsilon}\left[F_{\epsilon}(\xi_{\epsilon}(t))\right] = \bs{E}_{x}^{\epsilon}\left[F_{\epsilon}(\xi_{\epsilon}(t)),H((\mcV_{i}^{\epsilon})^{c}) > t\right] + \bs{E}_{x}^{\epsilon}\left[F_{\epsilon}(\xi_{\epsilon}(t)),H((\mcV_{i}^{\epsilon})^{c}) \leq t\right].
		\end{align*}
		Since $\xi_{\epsilon}(t) = \tilde{\xi}_{\epsilon}^{i}(t)$ for all $t < H((\mcV_{i}^{\epsilon})^{c})$, it holds
		\begin{align*}
			\bs{E}_{x}^{\epsilon}\left[F_{\epsilon}(\xi_{\epsilon}(t)),H((\mcV_{i}^{\epsilon})^{c}) > t\right] &= \bs{E}_{x}^{\epsilon}\left[F_{\epsilon}(\tilde{\xi}_{\epsilon}^{i}(t)),H((\mcV_{i}^{\epsilon})^{c}) > t\right]\\
			&= \bs{E}_{x}^{\epsilon}\left[F_{\epsilon}(\tilde{\xi}_{\epsilon}^{i}(t))\right] - \bs{E}_{x}^{\epsilon}\left[F_{\epsilon}(\tilde{\xi}_{\epsilon}^{i}(t)),H((\mcV_{i}^{\epsilon})^{c}) \leq t\right]\\
			&= (\tilde{\ms{P}}^{i}_\epsilon(t)F_{\epsilon})(x) - \bs{E}_{x}^{\epsilon}\left[F_{\epsilon}(\tilde{\xi}_{\epsilon}^{i}(t)),H((\mcV_{i}^{\epsilon})^{c}) \leq t\right].
		\end{align*}
		We conclude that
		\begin{align*}
			|(\tilde{\ms{P}}^{i}_\epsilon(t)F_{\epsilon})(x) &- (\ms{P}^{i}_\epsilon(t)F_{\epsilon})(x)| \\
			&= \left|\bs{E}_{x}^{\epsilon}\left[F_{\epsilon}(\tilde{\xi}_{\epsilon}^{i}(t)),H((\mcV_{i}^{\epsilon})^{c}) \leq t\right] + \bs{E}_{x}^{\epsilon}\left[F_{\epsilon}(\xi_{\epsilon}(t)),H((\mcV_{i}^{\epsilon})^{c}) \leq t\right]\right|\\
			&\leq 2 \, \lVert F_{\epsilon} \rVert_{\infty} \, \boldsymbol{P}_{x}^{\epsilon}[H((\mcV_{i}^{\epsilon})^{c}) \leq t].
		\end{align*}
		
		By combining Lemma \ref{lemma6.6} with $T = t = \mb h_\epsilon$, \eqref{in_sg} and hypothesis \eqref{23} we conclude by a triangular inequality that
		\begin{equation}
			\label{lim_semi_F}
			\lim\limits_{\epsilon \to 0} \sup\limits_{x \in \mcE_{i}^{\eps}} \left|F_{\epsilon}(x) - (\tilde{\ms{P}}^{i}_\epsilon(\mb h_\epsilon)F_{\epsilon})(x)\right| = 0.
		\end{equation}
		Fix $i \in S, x \in \mcE_{i}^{\eps}$ and $0 < \varsigma < \varsigma_{0}$. By the definition of total variation distance
		\begin{equation}
			\label{in_dtv}
			\left|(\tilde{\ms{P}}^{i}_\epsilon(\mb h_\epsilon)F_{\epsilon})(x) - \int_{\mcV_{i}^{\epsilon}} F_{\eps}(y) \ d\tilde{\mu}_{\epsilon}^{i}(y)\right| \leq 2 \ \lVert F_{\epsilon} \rVert_{\infty} \ d^i_{\rm TV} (\delta_x \tilde{\ms P}^{i}_\epsilon(\mb h_\epsilon) \,,\,  \tilde{\mu}_{\epsilon}^{i}).
		\end{equation}
		Since $\tilde{\xi}_{\eps}^{i}(\cdot)$ is ergodic,
		\begin{equation*}
			d^i_{\rm TV} (\delta_x \tilde{\ms P}^{i}_\epsilon(t) \,,\,  \tilde{\mu}_{\epsilon}^{i}) = d^i_{\rm TV} (\delta_x \tilde{\ms P}^{i}_\epsilon(t) \,,\,  \tilde{\mu}_{\epsilon}^{i}\tilde{\ms P}^{i}_\epsilon(t))
		\end{equation*}
		is decreasing in $t > 0$, and it follows from \eqref{33} that the right-hand side of \eqref{in_dtv} is bounded from above by
		\begin{align}
			\label{b_dtv}
			2 \ \lVert F_{\epsilon} \rVert_{\infty} \sup_{x \in \mcE_{i}} d^i_{\rm TV} (\delta_x \tilde{\ms P}^{i}_\epsilon(t^{\epsilon,i}_{\rm mix} (\varsigma)) \,,\,  \tilde{\mu}_{\epsilon}^{i}) = 2 \, \lVert F_{\epsilon} \rVert_{\infty} \, \varsigma
		\end{align}
		by definition of mixing time. The result follows by combining \eqref{lim_semi_F}, \eqref{in_dtv} and \eqref{b_dtv} since $\lVert F_{\epsilon} \rVert_{\infty}$ is uniformly bounded in $\epsilon$ and $0 < \varsigma < \varsigma_{0}$ is arbitrary.
	\end{proof}
	
	We state and prove a lemma that will be useful to prove Proposition \ref{prop_not_jump}. For $A \in \mfB_{I}$, denote by
	\begin{equation}
		\label{def_hit_MC}
		\tau(A) = \min\{n \geq 0: X_{n}^{\epsilon} \in A\}
	\end{equation}
	the hitting time of $A$ by the embedded chain $X_{n}^{\epsilon}$. Recall that $H(A)$ is the hitting time of $A$ by $\xi_{\eps}(\cdot)$ (cf. \eqref{htime}).
	
	\begin{lemma}
		\label{lemma_discrete_cont}
		For all $A \in \mfB_{I},$ $n \geq 1$ and $c > 0$
		\begin{equation*}
			\sup\limits_{x \in I\setminus A} \bs{P}_{x}^{\epsilon}\left[|\beta_{\epsilon} H(A) - n| > c \, n|\tau(A) = n\right] \leq \frac{1}{c^{2} n}
		\end{equation*}
        in which the supremum is over $x \in I\setminus A$ such that $\bs{P}_{x}^{\epsilon}\left[\tau(A) = n\right] > 0$.
	\end{lemma}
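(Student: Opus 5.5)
The plan is to use the construction of $\xi_{\epsilon}(\cdot)$ from its embedded chain. The process $\xi_{\epsilon}(t)=\eta_{\epsilon}(\beta_{\epsilon}t)$ is built from the chain $X_{n}^{\epsilon}$ and i.i.d.\ mean one exponential holding times $e_{1},e_{2},\dots$, which are independent of the chain. Its jump times are then $\tau_{k}/\beta_{\epsilon}$ with $\tau_{k}=e_{1}+\dots+e_{k}$.

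One technicality: the process might "jump" to its current location, in which case $\tau_n$ as defined by a change of state would differ. Since $\mbM_{\epsilon}^{x}$ is absolutely continuous by \textbf{(A2)}, this happens with probability zero, so $\tau_{k}$ coincides almost surely with the $k$-th clock ring. Take $x\in I\setminus A$ with $\bs P_{x}^{\epsilon}[\tau(A)=n]>0$. On the event $\{\tau(A)=n\}$ the process sits outside $A$ during $[0,\tau_{n}/\beta_{\epsilon})$ and enters $A$ at the $n$-th jump. Hence $\beta_{\epsilon}H(A)=\tau_{n}=\sum_{k=1}^{n}e_{k}$ on this event.

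The key step is independence. The event $\{\tau(A)=n\}$ is measurable with respect to the chain $(X_{k}^{\epsilon})_{k\le n}$, which is independent of $(e_{k})$. Conditioning on it therefore does not change the law of $\sum_{k\le n}e_{k}$, and we get
\[
\bs P_{x}^{\epsilon}\left[|\beta_{\epsilon}H(A)-n|>cn \,\middle|\, \tau(A)=n\right]=\mbP\left(\Big|\sum_{k=1}^{n}e_{k}-n\Big|>cn\right).
\]
The sum has mean $n$ and variance $n$. Chebyshev's inequality bounds the right-hand side by $n/(cn)^{2}=1/(c^{2}n)$. This bound is uniform in $x$, which gives the supremum.

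The main point needing care is the independence justification. The construction (a)–(c) of the Markov jump process must be realized on a common probability space, with the holding times independent of the sequence $\omega_{n}$ driving $X^{\epsilon}_{n}$. I would state this explicitly as the standard construction, and note that the law $\bs P^{\epsilon}_x$ does not depend on the realization.
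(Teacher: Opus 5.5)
Your proposal is correct and follows the paper's proof: on $\{\tau(A)=n\}$, $\beta_{\epsilon}H(A)$ is the sum of $n$ i.i.d.\ mean-one exponential holding times that are independent of the embedded chain, so it has law $Gamma(n,1)$, and Chebyshev's inequality gives the bound $1/(c^{2}n)$ uniformly in $x$. Your two added justifications are steps the paper only asserts informally: that the holding times are independent of the chain, and that self-jumps have probability zero by \textbf{(A2)}.
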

	\begin{proof}
		Fix $A \in \mfB_{I}$ and, to easy notation, let $\tau \coloneqq \tau(A)$. Let $Z_{\tau}$ be a random variable with distribution $Gamma(\tau,1)$. Conditioned on $\tau = n$, the random variable $\beta_{\epsilon} H(A)$ has the same distribution as $Z_{\tau}$. This is the case since $\tau$ is the number of jumps to reach $A$ and each jump of $\eta_{\eps}(\cdot)$ takes an exponential time with rate $1$ to occur. Since the $\tau$ jumping times are independent, the time for $\eta_{\eps}(\cdot)$ to reach $A$, that is their sum, has a distribution $Gamma(n,1)$ when $\tau = n$ that is the distribution of the sum of $n$ exponentially distributed independent random variables with rate $1$. The multiplication by $\beta_{\epsilon}$ is due to the fact that $H(A)$ refers to the speeded-up process $\xi_{\eps}(\cdot)$ and $\tau$ refers to the embedded Markov chain. Since the mean and variance of a $Gamma(n,1)$ distribution equals $n$, by Chebyshev inequality, for $c > 0$, $n \geq 1$ and $x \in I\setminus A$ with $\bs{P}_{x}^{\epsilon}\left[\tau(A) = n\right] > 0$ fixed,
		\begin{align*}
			\boldsymbol{P}_{x}^{\epsilon}[|\beta_{\epsilon} H(A) - n| >  cn| \tau(A) = n] \leq \frac{Var(Z_{n})}{c^{2}n^{2}} = \frac{1}{c^{2} n}.
		\end{align*}
	\end{proof}
	
	\begin{proof}[Proof of Proposition \ref{prop_not_jump}]
		Recall the definition of $\tau(I_{i}^{c})$ in \eqref{def_hit_MC} and denote $\tau \coloneqq \tau(I_{i}^{c})$ to ease notation. For each $n \geq 1$ and $x \in \mcE_{i}^{\epsilon}$,
		\begin{align*}
			\boldsymbol{P}_{x}^{\epsilon}[\tau = n] &= \boldsymbol{P}_{x}^{\epsilon}[X_{m}^{\epsilon} \in I_{i} \ \forall m < n,X_{n}^{\epsilon} \in I_{i}^{c}] \leq \boldsymbol{P}_{x}^{\epsilon}[X_{n-1}^{\epsilon} \in I_{i},X_{n}^{\epsilon} \in I_{i}^{c}]\\
			&\leq \sup\limits_{y \in I_{i}} \boldsymbol{P}_{y}^{\epsilon}[X_{1}^{\epsilon} \in I_{i}^{c}] = q_{\epsilon}
		\end{align*}
		in which the last inequality is due to the strong Markov property. Therefore, 
		\begin{align}
			\label{in01}
			\boldsymbol{P}_{x}^{\epsilon}[\tau \leq 2 \, \beta_{\epsilon} \, \boldsymbol{h}_{\epsilon}] = \sum_{n=1}^{\lfloor 2 \beta_{\epsilon} \mb h_{\epsilon} \rfloor} \boldsymbol{P}_{x}^{\epsilon}[\tau = n] \leq 2 \, q_{\epsilon} \, \beta_{\epsilon} \, \boldsymbol{h}_{\epsilon}.
		\end{align}
		By Lemma \ref{lemma_discrete_cont} we conclude that, for $n \geq 1$ and $x \in \mcE_{i}^{\epsilon}$ with $\bs{P}_{x}^{\epsilon}\left[\tau(I_{i}^{c}) = n\right] > 0$ fixed,
		\begin{align}
			\label{in02}
			\boldsymbol{P}_{x}^{\epsilon}[|\beta_{\epsilon} H(I_{i}^{c}) - n| >  n/2| \tau(I_{i}^{c}) = n] \leq \frac{4}{n}.
		\end{align}
		Now, observe that, for $x \in \mcE_{i}^{\epsilon}$ fixed,
		\begin{align}
			\label{in00}
			\boldsymbol{P}_{x}^{\epsilon}[H(I_{i}^{c}) \leq \mb h_{\epsilon}] \leq \boldsymbol{P}_{x}^{\epsilon}[\tau \leq 2 \, \beta_{\epsilon} \, \mb h_{\epsilon}] + \boldsymbol{P}_{x}^{\epsilon}[H(I_{i}^{c}) \leq \mb h_{\epsilon},\tau > 2 \,\beta_{\epsilon} \, \mb h_{\epsilon}]
		\end{align}
		and
		\begin{align}
			\label{in03} \nonumber
			\boldsymbol{P}_{x}^{\epsilon}[H(I_{i}^{c}) \leq \mb h_{\epsilon},\tau &> 2 \, \beta_{\epsilon} \, \mb h_{\epsilon}] = \sum_{n = \lceil 2 \beta_{\epsilon} \mb h_{\epsilon} \rceil}^{\infty} \boldsymbol{P}_{x}^{\epsilon}[\beta_{\epsilon} H(I_{i}^{c}) \leq \beta_{\epsilon} \, \mb h_{\epsilon}|\tau = n] \boldsymbol{P}_{x}^{\epsilon}[\tau = n]\\ \nonumber
			&\leq \sum_{n = \lceil 2 \beta_{\epsilon} \mb h_{\epsilon} \rceil}^{\infty} \boldsymbol{P}_{x}^{\epsilon}[\beta_{\epsilon} H(I_{i}^{c}) \leq n/2|\tau = n] \boldsymbol{P}_{x}^{\epsilon}[\tau = n]\\ \nonumber
			&= \sum_{n = \lceil 2 \beta_{\epsilon} \mb h_{\epsilon} \rceil}^{\infty} \boldsymbol{P}_{x}^{\epsilon}[-\beta_{\epsilon} H(I_{i}^{c}) \geq -n/2|\tau = n] \boldsymbol{P}_{x}^{\epsilon}[\tau = n]\\ \nonumber
			&= \sum_{n = \lceil 2 \beta_{\epsilon} \mb h_{\epsilon} \rceil}^{\infty} \boldsymbol{P}_{x}^{\epsilon}[n - \beta_{\epsilon} H(I_{i}^{c}) > n/2|\tau = n] \boldsymbol{P}_{x}^{\epsilon}[\tau = n]\\
			&\leq \sum_{n = \lceil 2 \beta_{\epsilon} \mb h_{\epsilon} \rceil}^{\infty} \frac{4}{n} \boldsymbol{P}_{x}^{\epsilon}[\tau = n] \leq \frac{2}{\beta_{\epsilon} \, \mb h_{\epsilon}}
		\end{align}
		in which the penultimate inequality is due to \eqref{in02}. The result follows by combining \eqref{in01}, \eqref{in00} and \eqref{in03} with hypotheses \eqref{hyp_not_jump}.
	\end{proof}
	
	\begin{proof}[Proof of Lemma \ref{lemma_Mix_nt}]
		Fix $i \in S$ and for $t > 0$ let $N(t) = \max\{n: \tilde{\tau}_{n} \leq t\}$ be the random number of jumps of $\tilde{\xi}_{\epsilon}^{i}(\cdot)$ until time $t$. Since $\tilde{\tau}_{n+1} - \tilde{\tau}_{n}$ follows an exponential distribution with rate $\beta_{\epsilon}$, $N(\cdot)$ is a Poisson process with rate $\beta_{\epsilon}$. In particular, $N(t)$ follows a Poisson distribution with mean $\beta_{\epsilon}t$.
		
		For $x \in \mcE_{i}^{\eps}$, $n \geq 1$ and $t > 0$ fixed, by definition \eqref{dtv}, denoting $\tilde{\mu}^{i}_{\eps}\left[J\right]$ as the expectation of $J: I_{i} \mapsto \mbR$ under $\tilde{\mu}^{i}_{\eps}$,
        \begin{align}
            \label{dTV_tn} \nonumber
			d^i_{\rm TV} (\delta_x \tilde{\ms P}^{i}_\epsilon(t) \,,\,  &\tilde{\mu}_{\epsilon}^{i}) \leq d^i_{\rm TV} (\delta_x \tilde{\ms P}^{i}_\epsilon(t \wedge  \tilde{\tau}_{n}) \,,\,  \tilde{\mu}_{\epsilon}^{i})\\ \nonumber
            &= \frac{1}{2}\sup\limits_{J} \left|\bs{E}_{x}^{\epsilon}\left[J(\tilde{\xi}_{\epsilon}^{i}(t \wedge  \tilde{\tau}_{n})) - \tilde{\mu}^{i}_{\eps}\left[J\right]\right]\right|\\ \nonumber
			&\leq \frac{1}{2}\sup\limits_{J} \left|\bs{E}_{x}^{\epsilon}\left[J(\tilde{\xi}_{\epsilon}^{i}(\tilde{\tau}_{n})) - \tilde{\mu}^{i}_{\eps}\left[J\right],N(t) > n\right]\right| + \bs{P}_{x}^{\epsilon}\left[N(t) \leq n\right]\\ 
            &\leq d^i_{\rm TV} (\delta_x \tilde{\ms P}^{i}_\epsilon(\tilde{\tau}_{n}) \,,\,  \tilde{\mu}_{\epsilon}^{i}) + 2 \, \bs{P}_{x}^{\epsilon}\left[N(t) \leq n\right]
		\end{align}
		in which the first inequality follows from the fact that $d^i_{\rm TV} (\delta_x \tilde{\ms P}^{i}_\epsilon(t) \,,\,  \tilde{\mu}_{\epsilon}^{i})$ is decreasing in $t$ as $\tilde{\xi}_{\epsilon}^{i}(\cdot)$ is ergodic. For $n = \lfloor \beta_{\epsilon}t/2 \rfloor$, by the Chernoff's bounds for Poisson distribution (see \cite[Theorem~4.5]{mitzenmacher2017probability}),
		\begin{align}
            \label{chernoff}
			\bs{P}_{x}^{\epsilon}\left[N(t) \leq n\right] \leq \bs{P}_{x}^{\epsilon}\left[N(t) \leq \beta_{\epsilon}t/2 \right] \leq \left(\frac{2}{e}\right)^{\frac{\beta_{\epsilon}t}{2}}.
		\end{align}
		For $0 < \varsigma < 1$, taking $t = 2n_{mix}^{\epsilon,i}(\varsigma)/\beta_{\epsilon}$ and $n = n_{mix}^{\epsilon,i}(\varsigma)$ in \eqref{dTV_tn}, we conclude that 
		\begin{align}
			\label{in_mix_dTV}
			d^i_{\rm TV} &(\delta_x \tilde{\ms P}^{i}_\epsilon(2n_{mix}^{\epsilon,i}(\varsigma)/\beta_{\epsilon}) \,,\,  \tilde{\mu}_{\epsilon}^{i}) \leq \varsigma + 2 \left(\frac{2}{e}\right)^{n_{mix}^{\epsilon,i}(\varsigma)}
		\end{align}
        by \eqref{chernoff} and the definition of $n_{mix}^{\epsilon,i}(\varsigma)$ (cf. \eqref{def_nmix}). 
        
        We recall the following property of the mixing time: for each pair $0 < \varsigma,\varsigma^{\prime} < 1/2$ there exists a constant $C_{\varsigma,\varsigma^{\prime}}$, that does not depend on $\epsilon$, such that $t_{mix}^{\epsilon,i}(\varsigma) \leq C_{\varsigma,\varsigma^{\prime}} t_{mix}^{\epsilon,i}(\varsigma^{\prime})$ (see for example \cite[Section~4.5]{levin2017markov}). Therefore, if $\varsigma < 1/2 - 2\left(2/e\right)^{n_{mix}^{\epsilon,i}(\varsigma)}$, then there exists a constant $C_{\varsigma} > 0
		$ such that
		\begin{align*}
			t_{mix}^{\epsilon,i}\left(\varsigma\right) \leq C_{\varsigma}t_{mix}^{\epsilon,i}\left(\varsigma + 2\left(2/e\right)^{n_{mix}^{\epsilon,i}(\varsigma)}\right) \leq 2C_{\varsigma} \frac{n_{mix}^{\epsilon,i}(\varsigma)}{\beta_{\epsilon}}
		\end{align*}
		in which the last inequality is a consequence of \eqref{in_mix_dTV} and the definition of $t_{mix}^{\epsilon,i}\left(\varsigma\right)$ (cf. \eqref{def_tmix}). The result follows by taking $\varsigma_{0} = \sup\{\varsigma < 1/2: \lim\sup_{\epsilon \to 0} \left(2/e\right)^{n_{mix}^{\epsilon,i}(\varsigma)} < 1/4\}$. 
	\end{proof}
	
	\subsection{Potential theory and proof of Proposition \ref{cor_7.3}}
	
	In order to prove Proposition \ref{cor_7.3}, we introduce some concepts from potential theory for non-reversible Markov jump processes in the uncountable state space $I$. We refer to \cite{bovier2016metastability} for a presentation of potential theory in the context of Markov process with countable state spaces and of diffusions. Denote by $\langle \cdot,\cdot \rangle_{\mu_{\epsilon}}$ the scalar product in $L^{2}(\mu_{\epsilon})$ which satisfies for $F,G \in L^{2}(\mu_{\epsilon})$
	\begin{align*}
		\langle F,G \rangle_{\mu_{\epsilon}} = \int_{I} F(x) \, G(x) \, p_{\epsilon}(x) \ dx\, ,
	\end{align*}
	recalling that $p_{\epsilon}$ is the density of $\mu_{\epsilon}$. The Dirichlet form associated with the generator $\mcL_{\epsilon}$ of $\xi_{\epsilon}(\cdot)$ is the functional $D_{\epsilon}$ acting on functions $F: I \to \mbR$ as
	\begin{align}
		\label{dir_fom}
		D_{\epsilon}(F) \coloneqq \frac{\beta_{\epsilon}}{2} \int_{I} \int_{I} p_{\epsilon}(x) \rho_{\epsilon}(x,y) [F(x) - F(y)]^{2} \, dx \, dy = \langle F,(-\mcL_{\epsilon})F \rangle_{\mu_{\epsilon}}
	\end{align}
	in which the second equality holds for $F$ in the support of $\mcL_{\epsilon}$.
	
	Denote by $\mcL_{\epsilon}^{\dagger}$ the adjoint of the generator $\mcL_{\epsilon}$ in $L^{2}(\mu_{\epsilon})$. This operator is given by
	\begin{align*}
		(\mcL_{\epsilon}^{\dagger}F)(x) = \beta_{\epsilon} \int_{I} \rho_{\epsilon}^{\dagger}(x,y)[F(y) - F(x)] \ dy
	\end{align*}
	in which $\rho_{\epsilon}^{\dagger}: I^{2} \to \mbR_{+}$ satisfies the detailed balance equation
	\begin{align}
		\label{detail_balance}
		p_{\epsilon}(x) \rho_{\epsilon}(x,y) = p_{\epsilon}(y) \rho_{\epsilon}^{\dagger}(y,x)
	\end{align}
	for all $x,y \in I$. In particular, $\rho_{\epsilon}^{\dagger}$ is a transition density and $\mcL_{\epsilon}^{\dagger}$ is the generator of a Markov process that we denote by $\xi_{\epsilon}^{\dagger}(\cdot)$. Let $\boldsymbol{P}_{x}^{\epsilon,\dagger}$ be the probability measure induced by $\xi_{\epsilon}^{\dagger}(\cdot)$ on $D(\mbR_{+},I)$ starting from $x \in I$. Expectation with respect to this measure is denoted by $\boldsymbol{E}_{x}^{\epsilon,\dagger}$. Let $D_{\epsilon}^{\dagger}$ be the Dirichlet form associated with $\mcL_{\epsilon}^{\dagger}$.
	
	For $A \in \mfB_{I}$, let
	\begin{align*}
		H^{\dagger}(A) = \inf\{t \geq 0: \xi_{\epsilon}^{\dagger}(t) \in A\}
	\end{align*}
	be the hitting time of the set $A$ by the adjoint process $\xi_{\epsilon}^{\dagger}(\cdot)$. Fix two disjoint non-empty subsets $A,B \in \mfB_{I}$. The equilibrium potential between $A$ and $B$ with respect to the process $\xi_{\epsilon}^{\dagger}(\cdot)$ is the function $h_{A,B}^{\epsilon,\dagger}: I \to [0,1]$ given by
	\begin{align}
		\label{def_EP}
		h_{A,B}^{\epsilon,\dagger}(x) = \boldsymbol{P}_{x}^{\epsilon,\dagger}[H^{\dagger}(A) < H^{\dagger}(B)].
	\end{align}
	From now on, we assume that both $A$ and $B$ are the union of a finite number of disjoint non-degenerate intervals. In this case, $h_{A,B}^{\epsilon,\dagger}$ is the solution of the Dirichlet problem
	\begin{align}
		\label{dir_problem}
		\begin{cases}
			(\mcL_{\epsilon}^{\dagger}h_{A,B}^{\epsilon,\dagger})(x) = 0, & \forall x \in I\setminus(A \cup B)\\
			h_{A,B}^{\epsilon,\dagger}(x) = 1, & \forall x \in A\\
			h_{A,B}^{\epsilon,\dagger}(x) = 0, & \forall x \in B
		\end{cases}.
	\end{align}
	Analogously, denote by $h_{A,B}^{\epsilon}$ the equilibrium potential with respect to the process $\xi_{\epsilon}(\cdot)$. 
	
	For $A \in \mfB_{I}$, let
	\begin{align*}
		H^{\dagger,+}(A) = \inf\{t \geq \tau_{1}^{\dagger}: \xi_{\epsilon}^{\dagger}(t) \in A\} \text{ in which } \tau_{1}^{\dagger} = \inf\{t \geq 0: \xi_{\epsilon}^{\dagger}(t) \neq \xi_{\epsilon}^{\dagger}(0)\}
	\end{align*}
    be the hitting time of $A$ by $\xi_{\epsilon}^{\dagger}(\cdot)$ after at least one jump. The capacity between $A$ and $B$ for the process $\xi_{\epsilon}(\cdot)$ and the adjoint $\xi_{\epsilon}^{\dagger}(\cdot)$ are defined by, respectively,
	\begin{align}
		\label{cap} \nonumber
		\text{cap}_{\epsilon}(A,B) &= \beta_{\epsilon} \int_{A} p_{\epsilon}(x) \boldsymbol{P}_{x}^{\epsilon}[H(B) < H^{+}(A)] \, dx \text{ and }\\
		\text{cap}_{\epsilon}^{\dagger}(A,B) &= \beta_{\epsilon} \int_{A} p_{\epsilon}(x) \boldsymbol{P}_{x}^{\epsilon,\dagger}[H^{\dagger}(B) < H^{\dagger,+}(A)] \, dx.
	\end{align}
	As $A$ is the union of a finite number of disjoint intervals, the integrals above are well-defined.
    
    For $x \in A$,
	\begin{align}
        \label{cap0} \nonumber
		-(\mcL_{\epsilon}h_{A,B}^{\epsilon})(x) &= \beta_{\epsilon} \int_{I} \rho_{\epsilon}(x,y)[1 - \boldsymbol{P}_{y}^{\epsilon}[H(A) < H(B)]] \, dy\\ \nonumber
		&= \beta_{\epsilon} \int_{I} \rho_{\epsilon}(x,y)\boldsymbol{P}_{y}^{\epsilon}[H(B) < H(A)] \, dy\\ \nonumber
		&= \beta_{\epsilon} \int_{I} \rho_{\epsilon}(x,y)\boldsymbol{P}_{y}^{\epsilon}[H(B) < H(A)] \, \chi_{A^{c}}(y) \, dy\\
		&= \beta_{\epsilon} \, \boldsymbol{P}_{x}^{\epsilon}[H(B) < H^{+}(A)] \, dy.
	\end{align}
	Since $h_{A,B}^{\epsilon}$ is the solution of a Dirichlet problem analogous to \eqref{dir_problem}, and it is in the support of $\mcL_{\epsilon}$, it follows from \eqref{cap0} that
	\begin{align}
		\label{cap2}
		D_{\epsilon}(h_{A,B}^{\epsilon}) = \langle h_{A,B}^{\epsilon},-(\mcL_{\epsilon}h_{A,B}^{\epsilon}) \rangle_{\mu_{\epsilon}} = \text{cap}_{\epsilon}(A,B)
	\end{align}
	and an analogous identity holds for $\text{cap}_{\epsilon}^{\dagger}(A,B)$.
	
	We prove that $\text{cap}_{\epsilon}(A,B) = \text{cap}_{\epsilon}^{\dagger}(A,B)$. This result relies on an extension of \cite[Lemma~2.3]{gaudilliere2014dirichlet} for Markov jump processes in the uncountable state space $I$.
	
	\begin{lemma}
		\label{lemma_cap}
		Let $A,B \subseteq I$ be disjoint sets that are unions of a finite number of non-degenerate intervals. Then, $\text{cap}_{\epsilon}(A,B) = \text{cap}_{\epsilon}^{\dagger}(A,B)$.		
	\end{lemma}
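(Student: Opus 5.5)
We will prove $\text{cap}_{\epsilon}(A,B) = \text{cap}_{\epsilon}^{\dagger}(A,B)$ by writing both capacities as the same bilinear expression in the two equilibrium potentials $h \coloneqq h_{A,B}^{\epsilon}$ and $h^{\dagger} \coloneqq h_{A,B}^{\epsilon,\dagger}$. The target identity is
\begin{equation*}
\text{cap}_{\epsilon}(A,B) = \langle h^{\dagger}, (-\mcL_{\epsilon}) h \rangle_{\mu_{\epsilon}} = \langle (-\mcL_{\epsilon}^{\dagger}) h^{\dagger}, h \rangle_{\mu_{\epsilon}} = \text{cap}_{\epsilon}^{\dagger}(A,B),
\end{equation*}
where the middle equality is the duality relation given by the detailed balance identity \eqref{detail_balance}. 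This mirrors the proof of \cite[Lemma~2.3]{gaudilliere2014dirichlet}, with sums replaced by integrals against $p_{\epsilon}(x)\,dx$.

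\emph{First equality.} We split $I$ into $A$, $B$ and $I\setminus(A\cup B)$. On $I\setminus(A\cup B)$ the function $\mcL_{\epsilon} h$ vanishes by the Dirichlet problem analogous to \eqref{dir_problem}, so this region contributes nothing. On $B$ we have $h^{\dagger}=0$, so $B$ contributes nothing either. On $A$ we have $h^{\dagger}=1$, and by \eqref{cap0}, $-\mcL_{\epsilon}h(x) = \beta_{\epsilon}\boldsymbol{P}_{x}^{\epsilon}[H(B)<H^{+}(A)]$. Integrating against $p_{\epsilon}$ over $A$ gives exactly $\text{cap}_{\epsilon}(A,B)$ as defined in \eqref{cap}.

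\emph{Last equality.} The same decomposition works with the roles of the two potentials swapped. The function $\mcL_{\epsilon}^{\dagger}h^{\dagger}$ vanishes off $A\cup B$, and $h=0$ on $B$. On $A$ we have $h=1$, and the analogue of \eqref{cap0} for the adjoint process gives $-\mcL_{\epsilon}^{\dagger}h^{\dagger}(x) = \beta_{\epsilon}\boldsymbol{P}_{x}^{\epsilon,\dagger}[H^{\dagger}(B)<H^{\dagger,+}(A)]$. This yields $\text{cap}_{\epsilon}^{\dagger}(A,B)$.

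\emph{Middle equality.} We expand the left-hand side using the kernel form of the generator:
\begin{equation*}
\langle h^{\dagger}, \mcL_{\epsilon} h\rangle_{\mu_{\epsilon}} = \beta_{\epsilon}\int_I\int_I p_{\epsilon}(x)\rho_{\epsilon}(x,y)\,h^{\dagger}(x)\,[h(y)-h(x)]\,dy\,dx .
\end{equation*}
We then substitute $p_{\epsilon}(x)\rho_{\epsilon}(x,y) = p_{\epsilon}(y)\rho_{\epsilon}^{\dagger}(y,x)$ in the term containing $h(y)$ and use Fubini. For the term containing $h(x)$ we use that $\int_I \rho_{\epsilon}(x,y)\,dy = 1$ and, by stationarity of $\mu_{\epsilon}$, $\int_I \rho_{\epsilon}^{\dagger}(x,y)\,dy = 1$. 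After these steps the expression becomes
\begin{equation*}
\beta_{\epsilon}\int_I\int_I p_{\epsilon}(y)\rho_{\epsilon}^{\dagger}(y,x)\,h(y)\,[h^{\dagger}(x)-h^{\dagger}(y)]\,dx\,dy = \langle \mcL_{\epsilon}^{\dagger}h^{\dagger}, h\rangle_{\mu_{\epsilon}} .
\end{equation*}

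\emph{Integrability and justification.} All integrands are bounded, since $0\le h, h^{\dagger}\le 1$ and $\rho_{\epsilon}$ is a probability density in its second variable. Hence Fubini applies. The sets $A$ and $B$ are finite unions of non-degenerate intervals, so the hitting probabilities are measurable and the capacities in \eqref{cap} are well defined.

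\emph{Expected main obstacle.} The step we expect to need the most care is verifying that $h^{\dagger}$ actually solves the Dirichlet problem \eqref{dir_problem} pointwise, and that the analogue of \eqref{cap0} holds for it. This requires the adjoint process $\xi_{\epsilon}^{\dagger}(\cdot)$ to be a genuine jump process with kernel $\rho_{\epsilon}^{\dagger}$. We would obtain this from \eqref{detail_balance} and the invariance of $\mu_{\epsilon}$, defining $\rho_{\epsilon}^{\dagger}$ arbitrarily on the null set where $p_{\epsilon}=0$. Once that is in place, we would apply a first-step (strong Markov) decomposition at the first jump, exactly as in \eqref{cap0}.
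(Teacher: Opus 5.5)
Your proof is correct, but it follows a different route from the paper's.

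\textbf{The paper's route.} The paper argues in two steps.
\begin{enumerate}
\item It uses the explicit density of the exit distribution from Lemma~\ref{lemma_density}, the series \eqref{density_hit}. Applying \eqref{detail_balance} successively along each multi-step path (``reversing the trajectory''), it shows $\text{cap}_{\epsilon}(A,B)=\text{cap}_{\epsilon}^{\dagger}(B,A)$.
\item It uses \eqref{cap2}, the invariance of the Dirichlet form under $F\mapsto 1-F$, and $1-h^{\epsilon}_{A,B}=h^{\epsilon}_{B,A}$ to get $\text{cap}_{\epsilon}(A,B)=\text{cap}_{\epsilon}(B,A)$.
\end{enumerate}
Combining these, with $A$ and $B$ swapped in the first step, gives the claim.

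\textbf{Your route.} Your single duality computation $\langle h^{\dagger},(-\mcL_{\epsilon})h\rangle_{\mu_{\epsilon}}=\langle(-\mcL_{\epsilon}^{\dagger})h^{\dagger},h\rangle_{\mu_{\epsilon}}$ is the argument of \cite[Lemma~2.3]{gaudilliere2014dirichlet} transplanted to the continuum. It needs only one-step detailed balance, stationarity of $\mu_{\epsilon}$, and the two Dirichlet problems. It therefore bypasses both the iterated path reversal and the identity $1-h^{\epsilon}_{A,B}=h^{\epsilon}_{B,A}$.

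Note, however, that \eqref{cap0}, which you invoke for both processes, already uses $1-\boldsymbol{P}^{\epsilon}_{y}[H(A)<H(B)]=\boldsymbol{P}^{\epsilon}_{y}[H(B)<H(A)]$. That requires $A\cup B$ to be hit almost surely, so neither route avoids this recurrence input.

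\textbf{What the paper's route buys.} The trajectory-reversal identity is exactly what the paper needs again for the first assertion of Lemma~\ref{lemma5.1}. It also gives the symmetry $\text{cap}_{\epsilon}(A,B)=\text{cap}_{\epsilon}(B,A)$ as a by-product.

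\textbf{A small correction.} In your justification of Fubini, the integrands are not bounded, since $p_{\epsilon}$ and $\rho_{\epsilon}$ need not be. They are, however, dominated by $p_{\epsilon}(x)\rho_{\epsilon}(x,y)$, which has total integral $1$ on $I\times I$, and that is all Fubini requires.
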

	
	Before proving Lemma \ref{lemma_cap}, we state and prove a result about the distribution of $\xi_{\epsilon}(H(B))$ when the process starts from $x \in A$.
	
	\begin{lemma}
		\label{lemma_density}
		If $A,B \in \mfB_{I}$ are disjoint sets that are unions of a finite number of non-degenerate intervals, then, for $x \in A$,
		\begin{align*}
			\bs{P}_{x}^{\epsilon}\left[H(B) < H^{+}(A)\right] = \int_{B} \bs{P}_{x}^{\epsilon}\left[H(\{y\}) = H^{+}(A \cup B)\right] \, dy
		\end{align*}
		in which the probability inside the integral is actually a density function. An analogous result holds for the hitting times of the adjoint process.
	\end{lemma}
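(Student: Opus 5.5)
The identity is a last-exit/first-entrance decomposition over the embedded chain $X_{n}^{\epsilon}$, followed by absolute continuity of each step. Starting from $x \in A$, the event $\{H(B) < H^{+}(A)\}$ is the same as the event that $\tau^{+}(A \cup B) \coloneqq \min\{n \geq 1: X_{n}^{\epsilon} \in A \cup B\}$ is finite and $X_{\tau^{+}(A\cup B)}^{\epsilon} \in B$. The reason is that $\xi_{\epsilon}(\cdot)$ visits exactly the states of the chain, at the jump times; hitting times of $\xi_{\epsilon}$ and of $X_{n}^{\epsilon}$ therefore refer to the same ordered sequence of positions. In particular $\xi_{\epsilon}(H(B))$ equals the position $X^{\epsilon}_{\tau^{+}(A\cup B)}$.

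First I decompose according to the value $n$ of $\tau^{+}(A \cup B)$ and use the transition densities from \textbf{(A2)}:
\begin{align*}
\bs{P}_{x}^{\epsilon}\big[H(B) < H^{+}(A)\big] = \sum_{n \geq 1} \int_{B} \int_{(A\cup B)^{c}} \!\!\cdots\! \int_{(A\cup B)^{c}} \rho_{\epsilon}(x,z_{1})\rho_{\epsilon}(z_{1},z_{2})\cdots\rho_{\epsilon}(z_{n-1},y)\, dz_{1}\cdots dz_{n-1}\, dy .
\end{align*}
Then I define $k_{x}(y)$ as the integrand in $y$ summed over $n$. This is a nonnegative measurable function of $y$, and by Tonelli its integral over $B$ is at most $1$. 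For any Borel $C \subset B$, the same computation with $B$ replaced by $C$ in the last slot gives $\bs{P}_{x}^{\epsilon}[\xi_{\epsilon}(H^{+}(A\cup B)) \in C] = \int_{C} k_{x}(y)\,dy$. So $k_{x}$ is the density, restricted to $B$, of the exit position $\xi_{\epsilon}(H^{+}(A\cup B))$. This density is exactly what the notation $\bs{P}_{x}^{\epsilon}[H(\{y\}) = H^{+}(A\cup B)]$ stands for. Integrating over $C = B$ yields the claimed formula.

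The main point of care is the interpretation of $\bs{P}_{x}^{\epsilon}[H(\{y\}) = H^{+}(A \cup B)]$ as a density rather than a probability, since each single point $y$ is hit with probability zero. I would state explicitly that it is defined as the Radon–Nikodym derivative of the measure $C \mapsto \bs{P}_{x}^{\epsilon}[\xi_{\epsilon}(H^{+}(A\cup B)) \in C,\ H^{+}(A\cup B) < \infty]$ on $B$ with respect to Lebesgue measure. Absolute continuity holds because $\bs{P}_{x}^{\epsilon}[X_{n}^{\epsilon} \in C] = \int_{I}\rho_{\epsilon}^{n-1}(x,z)\,\mbM_{\epsilon}^{z}(C)\,dz = 0$ whenever $\text{Leb}(C)=0$, by \textbf{(A2)}, and this holds for every $n \geq 1$. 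The explicit series above then identifies the derivative.

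For the adjoint process, the same argument applies verbatim with $\rho^{\dagger}_{\epsilon}$ in place of $\rho_{\epsilon}$. The only extra check is that $\rho^{\dagger}_{\epsilon}(x,\cdot)$ is a genuine transition density, which the paper notes after \eqref{detail_balance}. Its absolute continuity is built into the definition of $\rho^{\dagger}_{\epsilon}$ through \eqref{detail_balance}.
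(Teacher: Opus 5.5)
Your proposal is correct and takes essentially the same route as the paper. Both arguments decompose according to the value of $\tau^{+}(A\cup B)$ for the embedded chain and write each term as an iterated integral of transition densities through $I\setminus(A\cup B)$. Summing over $n$ gives the density \eqref{density_hit}, and the adjoint case follows identically with $\rho^{\dagger}_{\epsilon}$. Your Tonelli and Radon--Nikodym remarks only make explicit why that series is a finite, well-defined density.
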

	\begin{proof}
		Fix $x \in A$. Since
		\begin{align*}
			\bs{P}_{x}^{\epsilon}\left[H(B) < H^{+}(A)\right] = \bs{P}_{x}^{\epsilon}\left[H^{+}(B) = H^{+}(A \cup B)\right],
		\end{align*}
		it is enough to show that the probability measure
		\begin{align*}
			\nu_{x}^{\epsilon}(C) \coloneqq \nu_{x,A,B}^{\epsilon}(C) = \bs{P}_{x}^{\epsilon}\left[H^{+}(C) = H^{+}(A \cup B)\right]
		\end{align*}
		for $C \in \mfB_{A \cup B}$ is absolutely continuous wrt Lebesgue measure. Define for $A \in \mfB_{I}$, $\tau^{+}(A) = \min\{n \geq 1: X_{n}^{\epsilon} \in A\}$ the hitting time of $A$ by the embedded chain after at least one jump. The result follows since
		\begin{align*}
			\nu_{x}^{\epsilon}(C) &= \bs{P}_{x}^{\epsilon}\left[\tau^{+}(C) = \tau^{+}(A \cup B)\right]\\
			&= \sum_{n = 1}^{\infty} \bs{P}_{x}^{\epsilon}\left[\tau^{+}(C) = \tau^{+}(A \cup B),\tau^{+}(A \cup B) = n\right]\\
			&= \sum_{n = 1}^{\infty} \bs{P}_{x}^{\epsilon}\left[X_{m}^{\epsilon} \in I\setminus(A \cup B) \, \forall 1 \leq m \leq n -1,X_{n}^{\epsilon} \in C\right]\\
			&= \sum_{n = 1}^{\infty} \int_{C} \, dy\int_{(I\setminus(A \cup B))^{n-1}} \!\!\!\!\!\!\!\!\!\!\!\!\!\!\!\!\!\!\!\!\!\!\!\!\!dz_{1} \cdots dz_{n-1} \, \rho_{\epsilon}(x,z_{1})\rho_{\epsilon}(z_{n-1},y) \prod_{m = 1}^{n-2} \rho_{\epsilon}(z_{m},z_{m + 1})  \\
			&= \int_{C} \bs{P}_{x}^{\epsilon}\left[H(\{y\}) = H^{+}(A \cup B)\right] \, dy
		\end{align*}
		by defining the density $\bs{P}_{x}^{\epsilon}\left[H(\{y\}) = H^{+}(A \cup B)\right]$ as
		\begin{align}
			\label{density_hit}
			\rho_{\epsilon}(x,y) +
			 \sum_{n = 2}^{\infty} \int_{(I\setminus(A \cup B))^{n-1}} \!\!\!\!\!\!\!\!\!\!\!\!\!\!\!\!\!\!\!\!\!\!\!\!\!dz_{1} \cdots dz_{n-1} \, \rho_{\epsilon}(x,z_{1}) \rho_{\epsilon}(z_{n-1},y) \prod_{m = 1}^{n-2} \rho_{\epsilon}(z_{m},z_{m + 1})\, ,
		\end{align}
		which is well-defined since $I\setminus(A \cup B)$ is a non-empty union of a finite number of intervals. An analogous result holds for the adjoint process since $\rho_{\epsilon}^{\dagger}(y,\cdot)$ is a probability density function for $y \in I$. 
	\end{proof}
	
	\begin{proof}[Proof of Lemma \ref{lemma_cap}]
		We first note that
		\begin{align*}
			\text{cap}_{\epsilon}(A,B) &= \beta_{\epsilon} \int_{A} \int_{B} p_{\epsilon}(x) \boldsymbol{P}_{x}^{\epsilon}[H(\{y\}) = H^{+}(A \cup B)] \, dy \, dx\\
			&= \beta_{\epsilon} \int_{A} \int_{B} p_{\epsilon}(y) \boldsymbol{P}_{y}^{\epsilon,\dagger}[H^{\dagger}(\{x\}) = H^{\dagger,+}(A \cup B)] \, dy \, dx = \text{cap}_{\epsilon}^{\dagger}(B,A)
		\end{align*}
		in which the first and last equality follow from Lemma \ref{lemma_density} and the definition of capacities \eqref{cap}, and the second equality is obtained by reversing the trajectory of $\xi_{\epsilon}(\cdot)$. This reversal is obtained by multiplying \eqref{density_hit} by $p_{\epsilon}(x)$ and applying \eqref{detail_balance} in sequence for each element in the product. It remains to prove that $\text{cap}_{\epsilon}(A,B) = \text{cap}_{\epsilon}(B,A)$. But it follows from \eqref{cap2} that
		\begin{align*}
			\text{cap}_{\epsilon}(A,B) = D_{\epsilon}(h_{A,B}^{\epsilon}) = D_{\epsilon}(1 - h_{A,B}^{\epsilon}) = D_{\epsilon}(h_{B,A}^{\epsilon}) = \text{cap}_{\epsilon}(B,A).
		\end{align*}
	\end{proof}
	
	For $i \neq j \in S$, define 
	\begin{align}
		\label{theta_adjoint}
		\theta_{\epsilon}^{\dagger}(i,j) = \frac{\beta_{\epsilon}}{\mu_{\epsilon}(\mcE_{i}^{\epsilon})} \int_{\mcE_{i}^{\epsilon}} p_{\epsilon}(x) \boldsymbol{P}_{x}^{\epsilon,\dagger}[H^{\dagger}(\mcE_{j}^{\epsilon}) < H^{\dagger,+}(\check{\mcE}_{j}^{\epsilon})] \ dx.
	\end{align}
	Observe that, for all $i \in S$,
	\begin{align}
		\label{sum_theta}
		\sum_{j \in S\setminus\{i\}} \theta_{\epsilon}^{\dagger}(i,j) = \frac{\beta_{\epsilon}}{\mu_{\epsilon}(\mcE_{i}^{\epsilon})} \int_{\mcE_{i}^{\epsilon}} p_{\epsilon}(x) \boldsymbol{P}_{x}^{\epsilon,\dagger}[H^{\dagger}(\check{\mcE}_{i}^{\epsilon}) < H^{\dagger,+}(\mcE_{i}^{\epsilon})] \ dx
	\end{align}
	and an analogous equality holds for $\sum_{j \in S\setminus\{i\}} \theta_{\epsilon}(i,j)$ recalling the definition of $\theta_{\epsilon}(i,j)$ in \eqref{asym_rate}. The following lemma is an extension of \cite[Lemma~5.1]{landim2021resolvent} to Markov jump processes in the uncountable state space $I$.
	
	\begin{lemma}
		\label{lemma5.1}
		For $i \neq j \in S$,
		\begin{align*}
			\mu_{\epsilon}(\mcE_{i}^{\epsilon}) \, \theta_{\epsilon}(i,j) = \mu_{\epsilon}(\mcE_{j}^{\epsilon}) \, \theta_{\epsilon}^{\dagger}(j,i)
		\end{align*}
		and
		\begin{align*}
			\sum_{j \in S\setminus\{i\}} \theta_{\epsilon}^{\dagger}(i,j) = \sum_{j \in S\setminus\{i\}} \theta_{\epsilon}(i,j) = \frac{1}{\mu_{\epsilon}(\mcE_{i}^{\epsilon})} \text{cap}_{\epsilon}(\mcE_{i}^{\epsilon},\check{\mcE}_{i}^{\epsilon}).
		\end{align*}
	\end{lemma}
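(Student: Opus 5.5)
The second identity follows from the capacity identities; the first comes from a path reversal like the one in the proof of Lemma \ref{lemma_cap}.

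\emph{Second identity.} The events $\{H(\mcE_{j}^{\epsilon}) < H^{+}(\check{\mcE}_{j}^{\epsilon})\}$, $j \neq i$, are disjoint for $x \in \mcE_{i}^{\epsilon}$. Their union is the event that, after at least one jump, the process hits $\check{\mcE}_{i}^{\epsilon}$ before returning to $\mcE_{i}^{\epsilon}$. That is, the first well visited after the first jump is some $\mcE_j^\epsilon$ with $j\neq i$; note that $\check{\mcE}_{j}^{\epsilon}\supset\mcE_i^\epsilon$. Summing \eqref{asym_rate} over $j$ therefore gives
\begin{equation*}
\sum_{j\neq i}\theta_{\epsilon}(i,j) = \frac{\beta_{\epsilon}}{\mu_{\epsilon}(\mcE_{i}^{\epsilon})}\int_{\mcE_{i}^{\epsilon}} p_{\epsilon}(x)\,\boldsymbol{P}_{x}^{\epsilon}[H(\check{\mcE}_{i}^{\epsilon})<H^{+}(\mcE_{i}^{\epsilon})]\,dx = \frac{\text{cap}_{\epsilon}(\mcE_{i}^{\epsilon},\check{\mcE}_{i}^{\epsilon})}{\mu_{\epsilon}(\mcE_{i}^{\epsilon})}.
\end{equation*}
Here I use that $H=H^+$ on $\check{\mcE}_i^\epsilon$ when starting from $x\in\mcE_i^\epsilon$, because the process must jump to leave $x$. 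Equation \eqref{sum_theta} gives the same expression for the adjoint with $\text{cap}^{\dagger}_{\epsilon}$. Since $\mcE_i^\epsilon$ and $\check{\mcE}_i^\epsilon$ are disjoint finite unions of nondegenerate intervals, Lemma \ref{lemma_cap} identifies the two capacities, which proves the second identity.

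\emph{First identity.} I extend Lemma \ref{lemma_density} to three sets. For $x \in \mcE_{i}^{\epsilon}$, the probability $\boldsymbol{P}_{x}^{\epsilon}[H(\mcE_{j}^{\epsilon})<H^{+}(\check{\mcE}_{j}^{\epsilon})]$ equals $\int_{\mcE_{j}^{\epsilon}} \boldsymbol{P}_{x}^{\epsilon}[H(\{y\})=H^{+}(\mcE^{\epsilon})]\,dy$. The integrand is the density \eqref{density_hit} with $A\cup B$ replaced by $\mcE^{\epsilon}$, so the intermediate points $z_1,\dots,z_{n-1}$ range over $\Delta^{\epsilon} = I\setminus\mcE^{\epsilon}$. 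The derivation of Lemma \ref{lemma_density} applies verbatim, because it only used that $A\cup B$ is the set avoided in between and that its complement is a finite union of intervals. Multiplying \eqref{asym_rate} by $\mu_{\epsilon}(\mcE_{i}^{\epsilon})$ yields
\begin{equation*}
\mu_{\epsilon}(\mcE_{i}^{\epsilon})\theta_{\epsilon}(i,j)=\beta_{\epsilon}\int_{\mcE_{i}^{\epsilon}}\int_{\mcE_{j}^{\epsilon}} p_{\epsilon}(x)\,\boldsymbol{P}_{x}^{\epsilon}[H(\{y\})=H^{+}(\mcE^{\epsilon})]\,dy\,dx.
\end{equation*}
Expanding the density into the series over $n$, I apply \eqref{detail_balance} successively to each term. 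The chain $p_{\epsilon}(x)\rho_{\epsilon}(x,z_1)\rho_\epsilon(z_1,z_2)\cdots\rho_{\epsilon}(z_{n-1},y)$ becomes $p_{\epsilon}(y)\rho^{\dagger}_{\epsilon}(y,z_{n-1})\cdots\rho^{\dagger}_{\epsilon}(z_1,x)$. The domain of integration $\Delta^{\epsilon}$ for the intermediate variables is symmetric under this reversal, and Tonelli justifies exchanging the sums and integrals since all terms are nonnegative. After relabelling, the right-hand side is $\beta_\epsilon\int_{\mcE_j^\epsilon}\int_{\mcE_i^\epsilon}p_\epsilon(y)\,\boldsymbol{P}^{\epsilon,\dagger}_y[H^\dagger(\{x\})=H^{\dagger,+}(\mcE^\epsilon)]\,dx\,dy$. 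By the adjoint version of the density lemma and \eqref{theta_adjoint}, this equals $\mu_{\epsilon}(\mcE_{j}^{\epsilon})\theta^{\dagger}_{\epsilon}(j,i)$.

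The main care point is the reversal step: the density lemma must be stated with $\mcE^{\epsilon}$ as the avoided set and $\mcE_j^\epsilon$ as the target. Then the reversed path from $\mcE_{j}^{\epsilon}$ avoids all wells, including $\mcE_j^\epsilon$ itself, and ends in $\mcE_{i}^{\epsilon}$, which is exactly the adjoint event defining $\theta^{\dagger}_\epsilon(j,i)$.
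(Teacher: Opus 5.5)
Your proposal is correct and follows essentially the same route as the paper: the first identity comes from Lemma \ref{lemma_density} together with path reversal via \eqref{detail_balance}, and the second from \eqref{cap}, \eqref{sum_theta} and Lemma \ref{lemma_cap}. One small remark is that you do not need to extend Lemma \ref{lemma_density}, since it applies directly with $A=\check{\mcE}_{j}^{\epsilon}\ni x$ and $B=\mcE_{j}^{\epsilon}$, which gives $A\cup B=\mcE^{\epsilon}$.
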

	\begin{proof}
		The first assertion follows since
		\begin{align*}
			\mu_{\epsilon}(\mcE_{i}^{\epsilon}) \, \theta_{\epsilon}(i,j) &= \beta_{\epsilon} \int_{\mcE_{i}^{\epsilon}} p_{\epsilon}(x) \boldsymbol{P}_{x}^{\epsilon}[H(\mcE_{j}^{\epsilon}) < H^{+}(\check{\mcE}_{j}^{\epsilon})] \ dx\\
			&= \beta_{\epsilon} \int_{\mcE_{i}^{\epsilon}} \int_{\mcE_{j}^{\epsilon}} p_{\epsilon}(x) \boldsymbol{P}_{x}^{\epsilon}[H(\{y\}) = H^{+}(\mcE^{\epsilon})] \ dy \ dx\\
			&= \beta_{\epsilon} \int_{\mcE_{i}^{\epsilon}} \int_{\mcE_{j}^{\epsilon}} p_{\epsilon}(y) \boldsymbol{P}_{y}^{\epsilon,\dagger}[H^{\dagger}(\{x\}) = H^{\dagger,+}(\mcE^{\epsilon})] \ dy \ dx\\
			&= \mu_{\epsilon}(\mcE_{j}^{\epsilon}) \, \theta_{\epsilon}^{\dagger}(j,i)
		\end{align*}
		in which the second and last equality follow from Lemma \ref{lemma_density}, and the third equality follows by reversing the trajectory of $\xi_{\epsilon}(\cdot)$.
		
		By combining \eqref{cap} and \eqref{sum_theta} we conclude that
		\begin{align*}
			\mu_{\epsilon}(\mcE_{i}^{\epsilon}) \sum_{j \in S\setminus\{i\}} \theta_{\epsilon}^{\dagger}(i,j) = \text{cap}_{\epsilon}^{\dagger}(\mcE_{i}^{\epsilon},\check{\mcE}_{i}^{\epsilon})
		\end{align*}
		and an analogous inequality holds for the process $\xi_{\epsilon}(\cdot)$. The second assertion of the lemma is now direct from Lemma \ref{lemma_cap} since $\mcE_{i}^{\epsilon}$ and $\check{\mcE}_{i}^{\epsilon}$ are a union of a finite number of non-degenerate intervals.
	\end{proof}
	
	We are now in position to prove Proposition \ref{cor_7.3}. We follow closely the proof of Proposition 7.2 in \cite{landim2021resolvent}.
	
	\begin{proof}[Proof of Proposition \ref{cor_7.3}]
		Fix $i \in S$, and denote by $h_{i}^{\epsilon,\dagger} \coloneqq h_{\mcE_{i}^{\epsilon},\check{\mcE}_{i}^{\epsilon}}^{\epsilon,\dagger}$ the equilibrium potential of the adjoint process between $\mcE_{i}^{\epsilon}$ and $\check{\mcE}_{i}^{\epsilon}$. Multiplying both sides of the resolvent equation \eqref{resolvent_eq} by $h_{i}^{\epsilon,\dagger}$ and integrating with respect to the measure $\mu_{\epsilon}$ we have
		\begin{align}
			\label{int_res}
			\lambda \langle F_{\epsilon},h_{i}^{\epsilon,\dagger}\rangle_{\mu_{\epsilon}} - \langle \mcL_{\epsilon} F_{\epsilon},h_{i}^{\epsilon,\dagger}\rangle_{\mu_{\epsilon}} = \langle G,h_{i}^{\epsilon,\dagger}\rangle_{\mu_{\epsilon}}.
		\end{align}
		Since $G$ equals zero in $\Delta^{\epsilon}$ and $g(i)$ in $\mcE_{i}^{\epsilon}$, and $h_{i}^{\epsilon,\dagger}$ equal zero in $\check{\mcE}_{i}^{\epsilon}$ and one in $\mcE_{i}^{\epsilon}$, we conclude that the right-hand side of \eqref{int_res} can be written as
		\begin{align*}
			\langle G,h_{i}^{\epsilon,\dagger}\rangle_{\mu_{\epsilon}} = g(i) \mu_{\epsilon}(\mcE_{i}^{\epsilon}).
		\end{align*}
		Recalling the definition of $f_{\epsilon}(i)$ in \eqref{f_eps}, the first term on the left-hand side of \eqref{int_res} satisfies
		\begin{align*}
			\lambda \, \langle F_{\epsilon},h_{i}^{\epsilon,\dagger}\rangle_{\mu_{\epsilon}} &= \lambda \int_{\mcE_{i}^{\epsilon}} F_{\epsilon}(x)\,  p_{\epsilon}(x)\,  dx + \lambda \int_{\Delta^{\epsilon}} F_{\epsilon}(x)\,  h_{i}^{\epsilon,\dagger}(x)\,  p_{\epsilon}(x)\,  dx \\
			&\leq \lambda\,  \mu_{\epsilon}(\mcE_{i}^{\epsilon})\,  f_{\epsilon}(i) + \lambda \, C \, \mu_{\epsilon}(\Delta^{\epsilon}) 
		\end{align*}
		for some constant $C > 0$ since $\lVert F_{\epsilon} \rVert_{\infty}$ is uniformly bounded on $\epsilon$.
		
		Since $\mcL_{\epsilon}^{\dagger}h_{i}^{\epsilon,\dagger} = 0$ in $\Delta^{\epsilon}$ and $\mcL_{\epsilon}^{\dagger}$ is the adjoint of $\mcL_{\epsilon}$, the second term in the left-hand side of $\eqref{int_res}$ satisfies
		\begin{align*}
			 \langle \mcL_{\epsilon} F_{\epsilon},h_{i}^{\epsilon,\dagger}\rangle_{\mu_{\epsilon}} =  \langle F_{\epsilon},\mcL_{\epsilon}^{\dagger} h_{i}^{\epsilon,\dagger}\rangle_{\mu_{\epsilon}} = \sum_{j \in S} \int_{\mcE_{j}^{\epsilon}} F_{\epsilon}(x) \, (\mcL_{\epsilon}^{\dagger} h_{i}^{\epsilon,\dagger})(x) \, p_{\epsilon}(x) \, dx.
		\end{align*}
		In view of \eqref{def_EP} and \eqref{dir_problem}, since $h_{i}^{\epsilon,\dagger}(x) = 0$ for $x \in \mcE_{j}^{\epsilon}$ with $j \neq i$, in this case it holds
		\begin{align*}
			(\mcL_{\epsilon}^{\dagger} h_{i}^{\epsilon,\dagger})(x) &= \beta_{\epsilon} \int_{I} \rho_{\epsilon}^{\dagger}(x,y) \, h_{i}^{\epsilon,\dagger}(y) \, \chi_{I\setminus\check{\mcE}^{\epsilon}_{i}}(y) \, dy \\
			&=  \beta_{\epsilon} \int_{I} \rho_{\epsilon}^{\dagger}(x,y) \, \boldsymbol{P}_{y}^{\epsilon,\dagger}[H^{\dagger}(\mcE_{i}^{\epsilon}) < H^{\dagger}(\check{\mcE}_{i}^{\epsilon})] \, \chi_{I\setminus\check{\mcE}^{\epsilon}_{i}}(y)\, dy\\
			&=  \beta_{\epsilon} \, \boldsymbol{P}_{x}^{\epsilon,\dagger}[H^{\dagger}(\mcE_{i}^{\epsilon}) < H^{\dagger,+}(\check{\mcE}_{i}^{\epsilon})].
		\end{align*}
		Since $h_{i}^{\epsilon,\dagger}(y) - 1 = -\boldsymbol{P}_{y}^{\epsilon,\dagger}[H^{\dagger}(\check{\mcE}_{i}^{\epsilon}) < H^{\dagger}(\mcE_{i}^{\epsilon})]$, and $h_{i}^{\epsilon,\dagger}(x) = 1$ for $x \in \mcE_{i}^{\epsilon}$, in this case a deduction analogous to that above yields
		\begin{align*}
			(\mcL_{\epsilon}^{\dagger} h_{i}^{\epsilon,\dagger})(x) = - \beta_{\epsilon} \, \boldsymbol{P}_{x}^{\epsilon,\dagger}[H^{\dagger}(\check{\mcE}_{i}^{\epsilon}) < H^{\dagger,+}(\mcE_{i}^{\epsilon})].
		\end{align*}
		We conclude that
		\begin{align*}
			 \langle \mcL_{\epsilon} F_{\epsilon},h_{i}^{\epsilon,\dagger}\rangle_{\mu_{\epsilon}} =& \sum_{j \in S\setminus\{i\}} \beta_{\epsilon} \int_{\mcE_{j}^{\epsilon}} F_{\epsilon}(x) \, \boldsymbol{P}_{x}^{\epsilon,\dagger}[H^{\dagger}(\mcE_{i}^{\epsilon}) < H^{\dagger,+}(\check{\mcE}_{i}^{\epsilon})] \, p_{\epsilon}(x) \, dx \\
			 &- \beta_{\epsilon} \int_{\mcE_{i}^{\epsilon}} F_{\epsilon}(x) \, \boldsymbol{P}_{x}^{\epsilon,\dagger}[H^{\dagger}(\check{\mcE}_{i}^{\epsilon}) < H^{\dagger,+}(\mcE_{i}^{\epsilon})] \, p_{\epsilon}(x) \, dx.
		\end{align*}
		Adding and subtracting $f_{\epsilon}$ to the elements in the sum above, in view of \eqref{theta_adjoint} and \eqref{sum_theta}, the right-hand side can be rewritten as
		\begin{align}
			\label{inn0}
			\sum_{j \in S\setminus\{i\}} \mu_{\epsilon}(\mcE_{j}^{\epsilon}) \, f_{\epsilon}(j) \, \theta_{\epsilon}^{\dagger}(j,i) - \mu_{\epsilon}(\mcE_{i}^{\epsilon}) \, f_{\epsilon}(i) \sum_{j \in S\setminus\{i\}} \theta_{\epsilon}^{\dagger}(i,j) + R_{\epsilon}
		\end{align}
		in which $R_{\epsilon}$ is bounded from above by
		\begin{align}
			\label{inn1} \nonumber
			&\max\limits_{j \in S} \sup\limits_{x \in \mcE_{j}^{\epsilon}} \left|F_{\epsilon}(x) - f_{\epsilon}(j)\right| \left\{\sum_{j \in S\setminus\{i\}} \mu_{\epsilon}(\mcE_{j}^{\epsilon}) \, \theta_{\epsilon}^{\dagger}(j,i) + \mu_{\epsilon}(\mcE_{i}^{\epsilon}) \sum_{j \in S\setminus\{i\}} \theta_{\epsilon}^{\dagger}(i,j)\right\}\\
			&= 2 \, \mu_{\epsilon}(\mcE_{i}^{\epsilon})\max\limits_{j \in S} \sup\limits_{x \in \mcE_{j}^{\epsilon}} \left|F_{\epsilon}(x) - f_{\epsilon}(j)\right| \sum_{j \in S\setminus\{i\}} \theta_{\epsilon}(i,j).
		\end{align}
		The last equality in \eqref{inn1} is due to Lemma \ref{lemma5.1}. Moreover, due to Lemma \ref{lemma5.1}, the sum of the first two terms in \eqref{inn0} is equal to 
		\begin{align*}
			\mu_{\epsilon}(\mcE_{i}^{\epsilon}) \sum_{j \in S\setminus\{i\}} \theta_{\epsilon}(i,j)[f_{\epsilon}(j) - f_{\epsilon}(i)].
		\end{align*}
		We have proved so far that the identity \eqref{int_res} may be rewritten as
		\begin{align*}
			\lambda f_{\epsilon}(i) - \sum_{j \in S\setminus\{i\}} \theta_{\epsilon}(i,j)[f_{\epsilon}(j) - f_{\epsilon}(i)] = g(i) + R_{\epsilon}^{(2)}
		\end{align*}
		where the absolute value of $R_{\epsilon}^{(2)}$ is bounded from above by
		\begin{align*}
			C\frac{\mu_{\epsilon}(\Delta^{\epsilon})}{\mu_{\epsilon}(\mcE_{i}^{\epsilon})} + 2 \, \max\limits_{j \in S} \sup\limits_{x \in \mcE_{j}^{\epsilon}} \left|F_{\epsilon}(x) - f_{\epsilon}(j)\right| \sum_{j \in S\setminus\{i\}} \theta_{\epsilon}(i,j)
		\end{align*}
        for a constant $C > 0$. The result follows by $\mfR^{(1)}, \textbf{(A5)},$ $\textbf{(H0)}$ and the uniqueness of the solution of the reduced resolvent equation $(\lambda - \mcL)f = g$.
	\end{proof}

    \begin{remark}
        The proofs above rely heavily on the fact that both $\mcE^{\eps}$ and $\Delta^{\eps}$ are the union of a finite number of non-generate intervals. If these were subsets with Lebesgue measure zero, then alternative proofs would be necessary.
    \end{remark}
	
	\subsection{Proof of Proposition \ref{theorem_dTV}}

    In this section, $C$ is a constant, which may change from line to line, that does not depend on $\eps$, but may depend on quantities in the assumptions \textbf{(A1)}-\textbf{(A7)} (e.g. $\Lambda_{i}$, $\lambda_{i}$, $\kappa$, $\dots$).

    We start by estimating $d_{TV}(\mu_{\eps},\tilde{\mu}_{\eps})$ in which $\tilde{\mu}_{\epsilon} \coloneqq \sum_{i \in S} \mu_{\eps}(I_{i}) \, \tilde{\mu}_{\epsilon}^{i}$, recalling that $\tilde{\mu}_{\eps}^{i}$ is the ACIM of the restricted process in condition $\mathfrak{M}$. Since $d_{TV}(\mu_{\eps},\tilde{\mu}_{\eps})$ is equal to
    \begin{align}
        \label{en_dtv} 
        \sup_{A \in \mfB_{I}} \left|\sum_{i \in S} \mu_{\eps}(A \cap I_{i}) - \sum_{i \in S} \tilde{\mu}_{\eps}(A \cap I_{i})\right|
        \leq \sum_{i \in S} \sup_{B \in \mfB_{I_{i}}} \left|\mu_{\eps}(B) - \tilde{\mu}_{\eps}(B)\right|
    \end{align}
    we can uniformly bound $\left|\mu_{\eps}(B) - \tilde{\mu}_{\eps}(B)\right|$ for $B \in \mfB_{I_{i}}$ separately for each $i \in S$. Recall the definition of $\textbf{h}_{\eps}$ in \eqref{33}, and observe that $\left|\mu_{\eps}(B) - \tilde{\mu}_{\eps}(B)\right|$ is bounded by
    \begin{align}
        \label{finiteT}
        \left|\mu_{\eps}(B) - \mu_{\eps}(I_{i}) \boldsymbol{P}_{\mu_{\eps}^{i}}^{\eps}[\tilde{\xi}_{\eps}^{i}(\textbf{h}_{\eps}) \in B]\right| + \left|\mu_{\eps}(I_{i}) \boldsymbol{P}_{\mu_{\eps}^{i}}^{\eps}[\tilde{\xi}_{\eps}^{i}(\textbf{h}_{\eps}) \in B] - \tilde{\mu}_{\eps}(B)\right|
    \end{align}
    in which
    \begin{equation*}
        \mu_{\eps}^{i}(B) \coloneqq \frac{\mu_{\eps}(B)}{\mu_{\eps}(I_{i})}, \, \, \, B \in \mfB_{I_{i}}
    \end{equation*}
    is the measure $\mu_{\eps}$ conditioned on $I_{i}$. We bound the first term in \eqref{finiteT} by adapting the generator method and the second one as in the proof of Lemma \ref{lemma_Mix_nt} (cf. \eqref{in_mix_dTV}).
    
    Recalling from \eqref{generator_restricted} that $\tilde{\mcL}_{\eps}^{i}$ is the generator of $\tilde{\xi}_{\eps}^{i}(\cdot)$ that acts on functions $F: I_{i} \mapsto \mbR$, consider the Markov process $\tilde{\xi}_{\eps}(\cdot)$ with generator
    \begin{equation*}
        \tilde{\mcL}_{\eps}F(x) = \sum_{i \in S} \tilde{\mcL}_{\eps}^{i}F(x) \, \chi_{I_{i}}(x), \, \, \, x \in I
    \end{equation*}
    that acts on functions $F: I \mapsto \mbR$. This is the process that, starting from $x \in I_{i}$, behaves exactly as $\tilde{\xi}_{\eps}^{i}(\cdot)$. We note that $\tilde{\mu}_{\epsilon}$ is an invariant measure of this process.
    
    For $A \in \mfB_{I}$, consider the equation
    \begin{equation}
        \label{CS_eq}
        \tilde{\mcL}_{\eps}F_{A}^{\eps}(x) = \sum_{i \in S} \mu_{\eps}(I_{i}) \left[\boldsymbol{P}_{x}^{\eps}[\tilde{\xi}_{\eps}^{i}(\textbf{h}_{\eps}) \in A] - \chi_{A}(x)\right] \chi_{I_{i}}(x), \, \, \, x \in I.
    \end{equation}
    If $B \in \mfB_{I_{i}}$ for $i \in S$, it is well-know (see for example \cite[Proposition~1.1.5]{ethier2009markov}) that the solution of \eqref{CS_eq} is
    \begin{equation*}
        F_{B}^{\eps}(x) = \sum_{i \in S} \mu_{\eps}(I_{i}) \, \chi_{I_{i}}(x) \, \int_{0}^{\textbf{h}_{\eps}} \boldsymbol{P}_{x}^{\eps}[\tilde{\xi}_{\eps}^{i}(t) \in B] \, dt
    \end{equation*}
    for $x \in I$. In particular, $F_{B}^{\eps}(x) = 0$ if $x \notin I_{i}$. From now on, fix $i \in S$ and $B \in \mfB_{I_{i}}$.

    We proceed as in the generator method. Taking the expectation under $\mu_{\eps}^{i}$ and then absolute values on both sides of \eqref{CS_eq} for $B \in \mfB_{I_{i}}$ yield
    \begin{equation}
        \label{genM}
        |\mu_{\eps}^{i}[\tilde{\mcL}_{\eps}F_{B}^{\eps}]| = |\mu_{\eps}(I_{i}) \boldsymbol{P}_{\mu_{\eps}^{i}}^{\eps}[\tilde{\xi}_{\eps}^{i}(\textbf{h}_{\eps}) \in B] - \mu_{\eps}(B)|,
    \end{equation}
    so we can bound the first term in \eqref{finiteT} by bounding the left-hand side of \eqref{genM}. Consider a coupling between $\xi_{\epsilon}(\cdot)$ and $\tilde{\xi}_{\epsilon}(\cdot)$ such that $\xi_{\epsilon}(t,\omega) = \tilde{\xi}_{\epsilon}(t,\omega)$ for all $t < H(I_{i_{0}}^{c})(\omega)$ and $\omega \in \Omega$ in which $i_{0} \in S$ is such that $\xi_{\eps}(0) \in I_{i_{0}}$, that exists by definition of the restricted processes $\tilde{\xi}_{\epsilon}^{i}(\cdot)$. We observe that for $x \in I$
    \begin{align*}
        \tilde{\mcL}_{\eps}F_{B}^{\eps}(x) = \sum_{i \in S} \beta_{\eps} \, \mu_{\eps}(I_{i}) \, \chi_{I_{i}}(x) \, \int_{0}^{\textbf{h}_{\eps}} \boldsymbol{E}_{x}^{\eps}[\boldsymbol{P}_{\tilde{X}_{1}^{\eps}}^{\eps}[\tilde{\xi}^{i}_{\eps}(t) \in B]] - \boldsymbol{P}_{x}^{\eps}[\tilde{\xi}^{i}_{\eps}(t) \in B] \, dt.
    \end{align*}
    Taking expectations on $\mu^{i}_{\eps}$ we obtain        
    \begin{align}
        \label{Ss0} \nonumber
        \mu_{\eps}^{i}[\tilde{\mcL}_{\eps}F_{B}^{\eps}] &= \boldsymbol{E}^{\epsilon}_{\mu_{\eps}^{i}}[\tilde{\mcL}_{\epsilon}F_{B}^{\eps}(\tilde{\xi}_{\epsilon}(0))] = \boldsymbol{E}^{\epsilon}_{\mu_{\eps}}[\mu_{\eps}(I_{i})^{-1} \, \tilde{\mcL}_{\epsilon}F_{B}^{\eps}(\tilde{\xi}_{\epsilon}(0)),\tilde{\xi}_{\epsilon}(0) \in I_{i}]\\
        &= \boldsymbol{E}^{\epsilon}_{\mu_{\eps}}[\mu_{\eps}(I_{i})^{-1} \, \tilde{\mcL}_{\epsilon}F_{B}^{\eps}(\tilde{\xi}_{\epsilon}(0))]
    \end{align}
    in which the last equality holds since $\tilde{\mcL}_{\epsilon}F_{B}(x) = 0$ for $x \notin I_{i}$.

    Since $\lVert \tilde{\mcL}_{\epsilon}F_{B}^{\eps} \rVert_{\infty} \leq 2$ by \eqref{CS_eq}, we have that the absolute value of \eqref{Ss0} is bounded by
    \begin{align}
        \label{Ss1}
        \left|\boldsymbol{E}^{\epsilon}_{\mu_{\eps}}[\mu_{\eps}(I_{i})^{-1} \tilde{\mcL}_{\epsilon}F_{B}^{\eps}(\tilde{\xi}_{\epsilon}(0)),H(I_{i_{0}}^{c}) > \tau_{1}]\right| + \frac{2}{\mu_{\eps}(I_{i})} \ \boldsymbol{P}_{\mu_{\epsilon}}^{\eps}\left[H(I_{i_{0}}^{c}) = \tau_{1}\right]
    \end{align}
    in which the expectation is to be understood as over $\tilde{\xi}_{\epsilon}(0,\omega)$ for $\omega$ such that $H(I_{i_{o}}^{c})(\omega) > \tau_{1}(\omega)$ where $\tau_{1}$ is the time of the first jump of $\xi_{\eps}(\cdot)$. Recalling the definition of $q_{\eps}$ in \eqref{qeps}, we have
    \begin{align}
        \label{Ss1.1}
        \frac{2}{\mu_{\eps}(I_{i})} \ \boldsymbol{P}_{\mu_{\epsilon}}^{\eps}\left[H(I_{i_{0}}^{c}) = \tau_{1}\right] \leq C \, q_{\eps}
    \end{align}
    for a constant $C$ that depends on $\lim_{\eps \to 0} \mu_{\eps}(I_{i})$, which is greater than zero by \textbf{(A5)}.
    
    Define $\tilde{\mcL}_{\epsilon}F_{B}^{\eps}(\xi_{\epsilon}(0))$ as
    \begin{align}
        \label{Ss2}
         \beta_{\eps} \ \mu_{\eps}(I_{i}) \int_{0}^{\textbf{h}_{\eps}} \boldsymbol{E}_{\xi_{\epsilon}(0)}^{\eps}[\boldsymbol{P}_{X_{1}^{\eps}}^{\eps}[\tilde{\xi}_{\eps}(t) \in B]] - \boldsymbol{P}_{\xi_{\epsilon}(0)}^{\eps}[\tilde{\xi}_{\eps}(t) \in B] \, dt.
    \end{align}
    As $\mu_{\epsilon}$ is the invariant measure of $\xi_{\epsilon}(\cdot)$, 
    \begin{equation*}
        \boldsymbol{P}_{\mu_{\epsilon}}^{\epsilon}[\tilde{\xi}_{\epsilon}(t) \in B] = \boldsymbol{E}_{\mu_{\epsilon}}^{\epsilon}[\boldsymbol{P}_{X_{1}^{\eps}}^{\epsilon}[\tilde{\xi}_{\epsilon}(t) \in B]]
    \end{equation*}
    for all $t > 0$ and hence
    \begin{align*}
        \boldsymbol{E}^{\epsilon}_{\mu_{\eps}}[\mcL_{\epsilon}F_{B}^{\eps}(\xi_{\epsilon}(0))] = \beta_{\eps} \ \mu_{\eps}(I_{i}) \int_{0}^{\textbf{h}_{\eps}} \boldsymbol{E}_{\mu_{\epsilon}}^{\epsilon}[\boldsymbol{P}^{\epsilon}_{X_{1}^{\eps}}[\tilde{\xi}_{\epsilon}(t) \in B]] - \boldsymbol{P}_{\mu_{\epsilon}}^{\epsilon}[\tilde{\xi}_{\epsilon}(t) \in B] \, dt = 0.
    \end{align*}
    It follows from \eqref{Ss2} that
    \begin{align}
        \label{Ss2.1}
        |\tilde{\mcL}_{\epsilon}F_{B}^{\eps}(\xi_{\epsilon}(0))| \leq 2 \, \beta_{\eps} \, \mu_{\eps}(I_{i}) \, \textbf{h}_{\eps}
    \end{align}
    and, by the coupling between $\xi_{\eps}(\cdot)$ and $\tilde{\xi}_{\eps}(\cdot)$, we can conclude that
    \begin{align}
        \label{Ss3} \nonumber
        \Big|\boldsymbol{E}^{\epsilon}_{\mu_{\eps}}&[\mu_{\eps}(I_{i})^{-1} \tilde{\mcL}_{\epsilon}F_{B}^{\eps}(\tilde{\xi}_{\epsilon}(0)),H(I_{i_{0}}^{c}) > \tau_{1}]\Big| \\ \nonumber
        &= \left|\boldsymbol{E}^{\epsilon}_{\mu_{\eps}}[\mu_{\eps}(I_{i})^{-1} \tilde{\mcL}_{\epsilon}F_{B}^{\eps}(\xi_{\epsilon}(0)),H(I_{i_{0}}^{c}) > \tau_{1}]\right| \\ \nonumber
        &= \left|\boldsymbol{E}^{\epsilon}_{\mu_{\eps}}[\mu_{\eps}(I_{i})^{-1} \tilde{\mcL}_{\epsilon}F_{B}^{\eps}(\xi_{\epsilon}(0))] - \boldsymbol{E}^{\epsilon}_{\mu_{\eps}}[\mu_{\eps}(I_{i})^{-1} \tilde{\mcL}_{\epsilon}F_{B}^{\eps}(\xi_{\epsilon}(0)),H(I_{i_{0}}^{c}) = \tau_{1}]\right| \\
        &\leq 2 \, q_{\eps} \, \beta_{\eps} \, \textbf{h}_{\eps}
    \end{align}
    in which the first equality follows since, in the event $\{H(I_{i_{0}}^{c}) > \tau_{1}\}$, $\tilde{\mcL}_{\epsilon}F_{B}^{\eps}(\tilde{\xi}_{\epsilon}(0)) = \tilde{\mcL}_{\epsilon}F_{B}^{\eps}(\xi_{\epsilon}(0))$ as $X_{1}^{\eps} = \tilde{X}_{1}^{\eps}$, and the inequality is due to \eqref{Ss1.1} and \eqref{Ss2.1}. Recalling the definition $\textbf{h}_{\eps} = a_{\eps}/\beta_{\eps}$ in \eqref{he}, by combining \eqref{genM}-\eqref{Ss3} we conclude that
    \begin{equation}
        \label{Ss4}
        \left|\mu_{\eps}(I_{i}) \boldsymbol{P}_{\mu_{\eps}^{i}}^{\eps}[\tilde{\xi}_{\eps}^{i}(\textbf{h}_{\eps}) \in B] - \mu_{\eps}(B)\right| \leq C \, q_{\eps} \, a_{\eps}.
    \end{equation}
    
    The second term in \eqref{finiteT} is equal to
    \begin{align}
        \label{Ss5} \nonumber
        \mu_{\eps}(I_{i})\left|\boldsymbol{P}_{\mu_{\eps}^{i}}^{\eps}[\tilde{\xi}_{\eps}^{i}(\textbf{h}_{\eps}) \in B] - \tilde{\mu}_{\eps}^{i}(B)\right| &\leq \sup_{x \in I_{i}} d^i_{\rm TV} (\delta_x \tilde{\ms P}^{i}_\epsilon(\textbf{h}_{\eps}) \,,\,  \tilde{\mu}_{\epsilon}^{i})\\ \nonumber
        &\leq \sup_{x \in I_{i}} d^i_{\rm TV} (\delta_x \tilde{\ms P}^{i}_\epsilon(\tilde{\tau}_{\lfloor \beta_{\eps}\textbf{h}_{\eps}/2 \rfloor}) \,,\,  \tilde{\mu}_{\epsilon}^{i}) + 2 \, \left(\frac{2}{e}\right)^{\frac{\beta_{\eps}\textbf{h}_{\eps}}{2}}\\
        &\leq C \, \left[\lambda_{i} \vee \left(\frac{2}{e}\right)\right]^{\frac{a_{\eps}}{2}}
    \end{align}
    in which the second inequality follows from \eqref{dTV_tn} and \eqref{chernoff} by taking $t = \textbf{h}_{\eps}$ there, and the third is due to \eqref{bound_dTV_BV}, Theorem \ref{main_theorem} and the definition of $\textbf{h}_{\eps}$. Denote $\lambda^{\star} = \max_{i \in S} \lambda_{i} \vee 2/e$ and recall from \eqref{he} that $\{a_{\eps}\}$ is any sequence satisfying $1 \ll a_{\eps} \ll q_{\eps}^{-1} \wedge \beta_{\eps}$. Taking $a_{\eps} = 2 \, \log_{\lambda^{\star}} (q_{\eps} \vee \beta_{\eps}^{-1})$ in \eqref{Ss4} and \eqref{Ss5}, and substituting in \eqref{en_dtv}, we conclude that
    \begin{align*}
        d_{TV}(\mu_{\eps},\tilde{\mu}_{\eps}) \leq C \, q_{\eps} \vee \beta_{\eps}^{-1} \, \log(q_{\eps}^{-1} \wedge \beta_{\eps}).
    \end{align*}    
    The result follows by the triangular inequality
    \begin{align*}
        d_{TV}(\mu_{\epsilon},\mu) \leq d_{TV}(\mu_{\epsilon},\tilde{\mu}_{\epsilon}) +  d_{TV}(\tilde{\mu}_{\epsilon},\mu)
    \end{align*}
    and the following proposition.

    \begin{proposition}
        \label{lemma_q_eps}
        If $\xi_{\eps}(\cdot)$ is $\mathcal{L}$-metastable, then there exists a constant $C > 0$ such that $q_{\eps}^{-1} \leq C \, \beta_{\eps}$.
    \end{proposition}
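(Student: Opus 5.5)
The plan is to argue by contradiction. The inequality $q_{\eps}^{-1} \leq C\beta_{\eps}$ is equivalent to $\liminf_{\eps \to 0} q_{\eps}\beta_{\eps} > 0$. So suppose there is a subsequence $\eps_{k} \to 0$ along which $q_{\eps_k}\beta_{\eps_k} \to 0$. We will show that along this subsequence the speeded-up process, started in a well, never leaves it in time of order one. This contradicts $\mfC_{\mcL}$ together with the irreducibility of $\mcL$.

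\emph{Step 1: no exit from $I_i$ in bounded time.} Fix $i \in S$ and $t>0$, and put $\tau = \tau(I_i^c)$ as in the proof of Proposition \ref{prop_not_jump}. That proof gives, for $x \in \mcE_i^{\eps}$, the bound $\bs{P}_x^{\eps}[\tau = n] \leq q_{\eps}$. Repeating the chain \eqref{in01}--\eqref{in03} with the deterministic time $t$ in place of $\mb h_{\eps}$ then yields
\begin{equation*}
\sup_{x \in \mcE_i^{\eps}} \bs{P}_x^{\eps}[H(I_i^c) \leq 2t] \leq 4\, q_{\eps}\beta_{\eps} t + \frac{1}{\beta_{\eps} t}.
\end{equation*}
The second term uses Lemma \ref{lemma_discrete_cont} and $\beta_{\eps}\to\infty$. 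Along $\eps_k$ the right-hand side tends to $0$.

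\emph{Step 2: transfer to the order process.} Let $x^{\eps} \in \mcE_i^{\eps}$. On the event $\{Y_{\eps}(t) \neq i\}$ the trace process has visited some $\mcE_j^{\eps}$ with $j \neq i$ before time $t$. Since $\mcE_j^{\eps} \subset I_i^c$, the original process must have left $I_i$ before the real time $S^{\mcE^{\eps}}(t)$.

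We now control $S^{\mcE^{\eps}}(t)$. On the event $\{S^{\mcE^{\eps}}(t) > 2t\}$, the time spent in $\Delta^{\eps}$ during $[0,2t]$ exceeds $t$. By Markov's inequality and condition $\mfD$, this event has probability at most
\begin{equation*}
t^{-1}\sup_{x\in\mcE^{\eps}_i}\bs{E}^{\eps}_x\Big[\int_0^{2t}\chi_{\Delta^{\eps}}(\xi_{\eps}(s))\,ds\Big] \to 0.
\end{equation*}
Combining this with Step 1 gives
\begin{equation*}
\mbQ^{\eps_k}_{x^{\eps_k}}[\,\boldsymbol{x}(t) \neq i\,] \leq \bs{P}^{\eps_k}_{x^{\eps_k}}[H(I_i^c)\leq 2t] + \bs{P}^{\eps_k}_{x^{\eps_k}}[S^{\mcE^{\eps}}(t)>2t] \to 0.
\end{equation*}

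\emph{Step 3: contradiction.} Because $\mcL$ is irreducible and $\kappa \geq 2$, the total rate $\sum_j \theta(i,j)$ is positive. Hence $\mbQ_i^{\mcL}[\boldsymbol{x}(t) \neq i] > 0$ for every $t > 0$. For a Markov chain on the finite set $S$, a fixed time $t$ is almost surely not a jump time. Thus the projection $\boldsymbol{x} \mapsto \boldsymbol{x}(t)$ is $\mbQ_i^{\mcL}$-a.s. continuous in the Skorohod topology. Condition $\mfC_{\mcL}$ then gives $\mbQ^{\eps_k}_{x^{\eps_k}}[\boldsymbol{x}(t)\neq i] \to \mbQ_i^{\mcL}[\boldsymbol{x}(t)\neq i] > 0$, which contradicts Step 2. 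Therefore $\liminf_{\eps\to 0} q_{\eps}\beta_{\eps} > 0$, and for all small $\eps$ we get $q_{\eps}^{-1} \leq C\beta_{\eps}$.

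\emph{Main obstacle.} The delicate step is Step 2: the order process runs on the trace clock, not the real clock. The plan handles this by using $\mfD$ to show that the trace clock lags real time by at most a factor $2$ with high probability. One must also make sure that metastability is used only through $\mfC_{\mcL}$ and $\mfD$ at the fixed time $t$, which is why the Skorohod continuity of $\boldsymbol{x}\mapsto\boldsymbol{x}(t)$ is needed.
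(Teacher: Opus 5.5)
Your proof is correct, but it takes a different route from the paper's.

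The paper works with expectations. From $\boldsymbol{P}_x^{\eps}[\tau(I_i^c)>n]\ge(1-q_\eps)^n$ it gets $\boldsymbol{E}_x^{\eps}[\tau(\check{\mcE}_i^\eps)]\ge\boldsymbol{E}_x^{\eps}[\tau(I_i^c)]\ge q_\eps^{-1}$. It converts this via $\boldsymbol{E}_x^{\eps}[H(\check{\mcE}_i^\eps)]=\beta_\eps^{-1}\boldsymbol{E}_x^{\eps}[\tau(\check{\mcE}_i^\eps)]$, using the Gamma representation from Lemma \ref{lemma_discrete_cont}. It then simply asserts that $\mfC_{\mcL}$ keeps $\sup_{x\in\mcE_i^\eps}\boldsymbol{E}_x^{\eps}[H(\check{\mcE}_i^\eps)]$ bounded.

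You argue by contradiction instead, in three moves:
\begin{itemize}
\item the union bound $\boldsymbol{P}_x^\eps[\tau(I_i^c)\le n]\le n\,q_\eps$ combined with Chebyshev concentration of the Gamma clock;
\item a comparison of the trace clock with the real clock through $\mfD$ and Markov's inequality;
\item a test of weak convergence only on the fixed-time marginal $\boldsymbol{x}\mapsto\boldsymbol{x}(t)$, which is $\mbQ_i^{\mcL}$-a.s.\ continuous.
\end{itemize}

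The paper's argument is shorter, but its last step needs more justification than $\mfC_{\mcL}$ alone supplies. Two things are missing there:
\begin{itemize}
\item weak convergence of the order process does not bound expected hitting times without a uniform integrability argument;
\item $H(\check{\mcE}_i^\eps)$ runs on the real clock, which also counts time spent in $\Delta^\eps$. By contrast, $\mfC_{\mcL}$ concerns the trace clock, and $\mfD$ controls occupation of $\Delta^\eps$ only on bounded windows and only for starting points in $\mcE^\eps$.
\end{itemize}
Your version uses only what $\mfC_{\mcL}$ and $\mfD$ literally provide, so it is rigorous within the paper's definitions. The cost is a contradiction argument and the explicit clock comparison.

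Both proofs rest on the same input: the limiting total exit rate $\sum_j\theta(i,j)$ must be positive, which follows from $\kappa\ge 2$ and irreducibility. The paper uses this implicitly, as finiteness of the limiting mean exit time.

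One detail to state explicitly in Step 2: almost surely, the time spent outside $\mcE^\eps$ equals the time spent in $\Delta^\eps$. This holds because $I\setminus\bigcup_i I_i$ is finite and the transition kernel has a density, so a process started in $\mcE_i^\eps$ never visits that set.
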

    \begin{proof}
        Fix $i \in S$ and recall that $\tau(I_{i}^{c})$ is the hitting time of $I_{i}^{c}$ by the embedded Markov chain $X_{n}^{\eps}$. For $n \geq 0$ and $x \in \mathscr{E}_{i}^{\eps}$, by the strong Markov property,
        \begin{align*}
            \boldsymbol{P}_{x}^{\eps}[\tau(I_{i}^{c}) > n] = \boldsymbol{P}_{x}^{\eps}[X_{l}^{\eps} \in I_{i}: l = 1,\dots,n] \geq (1 - q_{\eps})^{n}
        \end{align*}
        and hence
        \begin{align*}
            \boldsymbol{E}_{x}^{\eps}[\tau(\check{\mathscr{E}}_{i}^{\eps})] \geq \boldsymbol{E}_{x}^{\eps}[\tau(I_{i}^{c})] = \sum_{n = 0}^{\infty} \boldsymbol{P}_{x}^{\eps}[\tau(I_{i}^{c}) > n] \geq q_{\eps}^{-1}
        \end{align*}
        in which the first inequality follows since $\check{\mathscr{E}}_{i}^{\eps} \subset I_{i}^{c}$. Denote $\tau \coloneqq \tau(\check{\mathscr{E}}_{i}^{\eps})$ to ease notation.
        
        As in the proof of Lemma \ref{lemma_discrete_cont}, let $Z_{\tau}$ be a random variable with distribution $Gamma(\tau,1)$ and observe that $H(\check{\mathscr{E}}_{i}^{\eps}) = Z_{\tau} \, \beta_{\eps}^{-1}$ since it is the hitting time of $\check{\mathscr{E}}_{i}^{\eps}$ by the speeded-up process. In particular,
        \begin{align*}
            \boldsymbol{E}_{x}^{\eps}[H(\check{\mathscr{E}}_{i}^{\eps})] = \beta_{\eps}^{-1} \boldsymbol{E}_{x}^{\eps}[\boldsymbol{E}_{x}^{\eps}[ Z_{\tau}|\tau]] = \beta_{\eps}^{-1} \, \boldsymbol{E}_{x}^{\eps}[\tau(\check{\mathscr{E}}_{i}^{\eps})]
        \end{align*}
        since $\boldsymbol{E}_{x}^{\eps}[ Z_{\tau}|\tau] = \tau$. Condition $\mathfrak{C}^{\mcL}$ implies that
        \begin{align*}
            0 < \lim\limits_{\eps \to 0} \inf_{x \in \mathscr{E}_{i}^{\eps}} \boldsymbol{E}_{x}^{\eps}[H(\check{\mathscr{E}}_{i}^{\eps})] =  \lim\limits_{\eps \to 0} \sup_{x \in \mathscr{E}_{i}^{\eps}} \boldsymbol{E}_{x}^{\eps}[H(\check{\mathscr{E}}_{i}^{\eps})] < \infty
        \end{align*}
        so there exists a constant $C > 0$ such that
        \begin{align*}
            \frac{q_{\eps}^{-1}}{\beta_{\eps}} \leq \frac{\boldsymbol{E}_{x}^{\eps}[\tau(\check{\mathscr{E}}_{i}^{\eps})]}{\beta_{\eps}} = \boldsymbol{E}_{x}^{\eps}[H(\check{\mathscr{E}}_{i}^{\eps})] < C.
        \end{align*}
    \end{proof}

    \subsection{Auxiliary results of Sections \ref{Sec_applications} and \ref{SecEx2}}
    \label{SecAux}

    Let $T$ be a map that satisfies \textbf{(A1)} and \textbf{(A6)}, and recall that each component of $I$ can be decomposed as $I_{i} = \bigcup_{j=1}^{\kappa_{i}} I_{i,j}$ with $\kappa_{i} < \infty$, where $I_{i,j}$ are intervals in which $T$ is continuous and one-to-one. Consider the Markov chain $X_{n}^{\eps}$ obtained perturbing $T$ by additive noise as in \eqref{additive_noise} in which the noise $\sigma_{\eps}^{x}$ has support $[-\eps_{1},\eps_{2}]$ with $\eps_{1},\eps_{2} > 0$ for all $x \in I$. For $A,B \in \mathfrak{B}_{I}$, we denote by
    \begin{equation*}
        A \oplus B = \{x + y: x \in A, y \in B\}
    \end{equation*}
    the Minkowski addition of $A$ and $B$. Observe that it must hold
    \begin{equation*}
        T(I) \oplus [-\eps_{1},\eps_{2}] \subset I
    \end{equation*}
    for the Markov process to be well-defined.

    We will show that if the derivative of $T$ is greater or equal to $2$ on $I_{ij}$ and the branches of $T$ in $I_{i}$ have the same image, then $X_{n}^{\eps}$ has a unique ACIM.

    \begin{proposition}
        \label{prop_ACIM_expand}
        Let $X_{n}^{\eps}$ be the Markov chain induced by a map $T$ and an additive noise $\sigma_{\eps}^{x}$ with support $[-\eps_{1},\eps_{2}]$ for $\eps_{1},\eps_{2} > 0$ and all $x \in I$. If $\inf_{x \in I_{ij}} |T^{\prime}(x)| \geq 2$ and $T(I_{ij}) = T(I_{ij^{\prime}})$ for all $i \in S$ and $j,j^{\prime} = 1,\dots,\kappa_{i}$, then $X_{n}^{\eps}$ has a unique ACIM.
    \end{proposition}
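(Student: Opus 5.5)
The natural route is to verify the hypothesis of Proposition \ref{prob_cone_ACIM}: I will find $n_{\eps}<\infty$ and a set $A_{\eps}$ of positive Lebesgue measure containing the support of every $\rho_{\eps}(x,\cdot)$ such that $\inf_{x\in I,y\in A_{\eps}}\rho^{n_{\eps}}_{\eps}(x,y)>0$. Aperiodicity, required in \textbf{(A4)}, should come out of the same bound, since $\rho^{n_\eps}_\eps$ and $\rho^{n_\eps+1}_\eps$ will both be positive on $A_\eps$. Take $A_{\eps}=T(I)\oplus[-\eps_1,\eps_2]$. Because $\rho_{\eps}(x,y)$ is the noise density evaluated at $y-T(x)$, it vanishes for $y\notin A_{\eps}$, so $\rho_{\eps}(x,y)=\rho_\eps(x,y)\chi_{A_\eps}(y)$ holds automatically. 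I will assume, as in the examples, that the noise density is bounded below by some $c_\eps>0$ on its support, or at least on a subinterval $[-\eps_1',\eps_2']$; the uniform densities in Sections \ref{Sec_applications} and \ref{SecEx2} satisfy this.

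The key step is to show that the $n$-step support spreads. From any starting point $x$, $\rho^{1}_\eps(x,\cdot)\ge c_\eps$ on an interval $J_1$ of length $\eps_1+\eps_2$ centred near $T(x)$. Inductively, suppose $\rho^{n}_\eps(x,\cdot)\ge c_n$ on an interval $J_n$. If $J_n$ lies inside one branch interval $I_{ij}$, then $T(J_n)$ is an interval of length at least $2|J_n|$, and $J_{n+1}=T(J_n)\oplus[-\eps_1,\eps_2]$ carries density at least $c_n c_\eps\cdot\min(|J_n|,\eps_1+\eps_2)$ on a slightly shrunken interval. If instead $J_n$ straddles a discontinuity of $T$, I keep the longer piece, which has length at least $|J_n|/2$; after applying $T$ it still has length at least $|J_n|$, and the noise adds $\eps_1+\eps_2$. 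In both cases the length grows by a fixed additive amount at each step. After finitely many steps, $J_n$ therefore contains a full branch interval $I_{ij}$. Applying $T$ once more gives a set containing $T(I_{ij})$, which by the hypothesis $T(I_{ij})=T(I_{ij'})$ is the whole image $T(I_i)$ of the component. One further noise step fills $T(I_i)\oplus[-\eps_1,\eps_2]$, and this is uniform in $x$.

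To reach the other components, I use that the noise may leave $I_i$. Assumption \textbf{(A1.4)}, together with the fact that $T(I_i)\oplus[-\eps_1,\eps_2]$ meets a neighbouring component along an interval of positive length, gives a positive density on a small interval in $I_{i_1}$. The same expansion argument then fills $I_{i_1}$, and iterating along the chain reaches all of $A_\eps$. Taking $n_\eps$ to be the maximum of the finitely many step counts, and padding shorter paths by additional steps that keep the lower bound (once a component image is filled, the bound is preserved by one more step), gives a uniform positive lower bound on $A_\eps$.

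The main obstacle is the straddling case near discontinuities and at the boundary of $I$, where truncation or the endpoints of $T(I_{ij})$ could make the interval shrink rather than grow. This is exactly where the slope bound $|T'|\ge2$ is used: halving the interval and then doubling its length loses nothing, so the noise width forces strict growth. I would handle the $\eps$-neighbourhoods of the endpoints separately, using the fact that the noise support $[-\eps_1,\eps_2]$ has both sides positive, so that from an endpoint region the process can still move inward.
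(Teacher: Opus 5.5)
Your route is the paper's. You apply Proposition~\ref{prob_cone_ACIM} with $A_{\eps}=T(I)\oplus[-\eps_{1},\eps_{2}]$ and use the same three-case expansion: one branch gives doubling, two branches means keep the longer piece, and three or more means a full branch is covered and $T(I_{ij})=T(I_{i})$. That is exactly Lemma~\ref{lemma_expand}, and you then use \textbf{(A1.4)} to pass between components. Tracking lower bounds on $\rho_{\eps}^{n}$, under your added assumption that the noise density is bounded below by $c_{\eps}$, is more careful than the paper. The paper follows only supports and then asserts an infimum, which needs exactly that assumption. The truncation at $\partial I$ you worry about cannot occur, since $T(I)\oplus[-\eps_{1},\eps_{2}]\subset I$ is part of the setting.

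The step that fails is the last one: no uniform positive lower bound exists on all of $A_{\eps}$. Let $\bar f<\infty$ bound the noise density and let $N$ be the total number of branches.
\begin{itemize}
\item For $n\ge 2$, $\rho_{\eps}^{n-1}(x,\cdot)\le\bar f$.
\item A point $y=\sup T(I)+\eps_{2}-\eta$ can be reached only from $z$ with $T(z)\ge\sup T(I)-\eta$. Since $|T'|\ge 2$, this set has Lebesgue measure at most $N\eta/2$.
\item Hence $\rho_{\eps}^{n}(x,y)\le N\bar f^{2}\eta/2\to 0$, and likewise at the left end.
\end{itemize}
So the shrinking you noticed in the inductive step cannot be undone at $\partial A_{\eps}$. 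Moreover, $\rho_{\eps}=\rho_{\eps}\chi_{A_{\eps}}$ forces every admissible $A_{\eps}$ to contain such points. Therefore \eqref{cond_unique_ACIM} fails for every admissible $A_{\eps}$ whenever the noise density is bounded, e.g.\ for uniform noise. The paper's proof claims \eqref{cond_unique_ACIM} on all of $A_{\eps}$ and has the same defect.

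The repair uses what you have already shown, with a smaller target.
\begin{itemize}
\item If $\rho_{\eps}^{m}(x,\cdot)\ge c_{m}$ on a full branch $I_{ij}$, then for every $y\in T(I_{i})$ the set $\{z\in I_{ij}:T(z)\in[y-\eps_{2},y+\eps_{1}]\}$ has measure at least $\min\{\eps_{1},\eps_{2},\text{Leb}(T(I_{i}))\}/\sup_{I_{ij}}|T'|$, which is positive by \textbf{(A6)}. So $\rho_{\eps}^{m+1}(x,\cdot)\ge c_{m}c_{\eps}\min\{\eps_{1},\eps_{2},\text{Leb}(T(I_{i}))\}/\sup_{I_{ij}}|T'|$ on all of $T(I_{i})$, uniformly in $x$.
\item Your padding step is legitimate. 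The bound $\text{Leb}(T(I_{i}))\ge 2\max_{j}\text{Leb}(I_{ij})$ forces the interval $T(I_{i})$ to contain a full branch of $I_{i}$, so coverage of $T(I_{i})$ propagates to the next step.
\item After entering a neighbouring component $I_{i_{1}}$, apply $T$ once to land in $T(I_{i_{1}})$. Then grow inside $T(I_{i_{1}})$, intersecting with it at each step as the paper does, so the expansion cannot drift to another component.
\end{itemize}
This yields $n_{\eps}$ and $\vartheta>0$ with $\rho_{\eps}^{n_{\eps}}(x,y)\ge\vartheta$ for all $x\in I$ and $y\in T(I)$. That is a Doeblin minorisation $\boldsymbol{P}_{x}^{\eps}[X_{n_{\eps}}^{\eps}\in\cdot\,]\ge\vartheta\,\text{Leb}(\cdot\cap T(I))$. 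The $n_{\eps}$-step kernel is then a strict contraction in total variation on probability measures, and it has a unique fixed point $\pi$. The one-step image of $\pi$ is also fixed, so $\pi$ is invariant, and every invariant probability is fixed by the $n_{\eps}$-step kernel. Finally, $\pi$ is absolutely continuous by \textbf{(A2)}. This gives existence and uniqueness directly. It avoids the infimum over all of $A_{\eps}$, the Harris-recurrence citation, and the aperiodicity used in the proof of Proposition~\ref{prob_cone_ACIM}.
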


    The proof of Proposition \ref{prop_ACIM_expand} is a consequence of Proposition \ref{prob_cone_ACIM} and the following lemma.

    \begin{lemma}
        \label{lemma_expand}
        Let $T$ be a map satisfying the assumptions of Proposition \ref{prop_ACIM_expand}. If $J \subset T(I_{i}), i \in S,$ is an interval, then $\max_{j} \text{Leb}(T(J \cap I_{ij})) \geq \text{Leb}(J)$.
    \end{lemma}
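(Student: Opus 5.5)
The plan is a short case analysis on how the interval $J$ meets the monotonicity intervals $I_{ij}$, using only the expansion bound $|T'|\ge 2$ and the fact that all branches in $I_i$ share the same image $T(I_{ij})=T(I_i)$. The idea is that $J$ is an interval of the \emph{domain} (it is a subinterval of $T(I_i)\subset \widebar{I_i}$, so it lives in the same space as the $I_{ij}$), and we want one branch that expands its piece of $J$ to at least the full length of $J$.

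First I will dispose of the case where $J$ meets only one interval $I_{ij}$ up to endpoints. Then $J\cap I_{ij}$ has full Lebesgue measure in $J$. Since $T$ is $C^1$ and monotone there with $|T'|\ge 2$, we get $\text{Leb}(T(J\cap I_{ij}))\ge 2\,\text{Leb}(J)\ge \text{Leb}(J)$.

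If $J$ meets several intervals, I will use the ordering of $I_i$ into consecutive intervals $I_{i1},\dots,I_{i\kappa_i}$.

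\emph{Two pieces.} If $J$ meets exactly two consecutive intervals, the two pieces have lengths $a+b=\text{Leb}(J)$. One of them has length at least $\text{Leb}(J)/2$, and its image has length at least $2\cdot\text{Leb}(J)/2=\text{Leb}(J)$ by the derivative bound.

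\emph{Three or more pieces.} Then $J$ fully contains some interval $I_{ij}$. Hence $T(J\cap I_{ij})=T(I_{ij})=T(I_i)\supset J$, using the common-image hypothesis and $J\subset T(I_i)$. This gives the bound $\ge \text{Leb}(J)$ directly.

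The main obstacle is making these cases watertight, namely checking that the two-piece case really only needs the factor $2$. The bound $\max(a,b)\ge \text{Leb}(J)/2$ is exact and the expansion factor is exactly $2$, so the argument closes with no slack, which is why the hypothesis $|T'|\ge 2$ rather than $>1$ is required. I would also note that the finitely many endpoints of the $I_{ij}$ have measure zero and are ignored throughout, and that the monotonicity of each branch guarantees $\text{Leb}(T(K))=\int_K|T'|\ge 2\,\text{Leb}(K)$ for any subinterval $K\subset I_{ij}$, via the change of variables and \textbf{(A6)}.
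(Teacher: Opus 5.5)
Your proposal is correct and matches the paper's proof essentially line by line. You use the same case split according to whether $J$ meets one, two, or at least three of the $I_{ij}$: the factor-$2$ expansion on the larger piece handles the first two cases, and the common-image hypothesis $T(J\cap I_{ij})=T(I_{ij})=T(I_i)\supset J$ handles the case where $J$ contains a whole $I_{ij}$. Your remarks that the endpoints are negligible and that $J\subset T(I_i)\subset\overline{I_i}$ lies in the domain (so the pieces exhaust $J$ up to measure zero) spell out what the paper leaves implicit.
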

    \begin{proof}
        We divide the proof into three cases, depending on how many of the intervals $I_{i,j}$ are intersected by the interval $J \subset T(I_{i})$. If it intersects three or more intervals, then there exists $I_{ij}$ with $I_{ij} \subset J$. In this case $T(J \cap I_{ij}) = T(I_{ij}) = T(I_{i})$, in which the second equality holds since the image of all sub-intervals $I_{ij}$ are equal, and the result is direct. If $J$ intersects only one interval, then there exists $I_{ij}$ with $J \subset I_{ij}$ and $\text{Leb}(T(J \cap I_{ij})) = \text{Leb}(T(J)) \geq 2 \, \text{Leb}(J)$ since the absolute value of the derivative of $T$ is greater or equal to $2$ on $I_{ij}$.
        
        It remains the case in which $J$ intersects two intervals, say, $I_{ij}$ and $I_{ij^{\prime}}$. Denote by $J_{1} = J \cap I_{ij}$ and $J_{2} = J \cap I_{ij^{\prime}}$ assuming that $\text{Leb}(J_{1}) \geq \text{Leb}(J_{2})$. Then
        \begin{align*}
            \text{Leb}(T(J \cap I_{ij}))= \text{Leb}(T(J_{1})) \geq 2 \, \text{Leb}(J_{1}) \geq \text{Leb}(J)
        \end{align*}
        in which the first inequality follows from the second case since $J_{1} \subset I_{ij}$.
    \end{proof}

    \begin{proof}[Proof of Proposition \ref{prop_ACIM_expand}]
        We will apply Proposition \ref{prob_cone_ACIM} with
        \begin{align*}
            A_{\eps} = T(I) \oplus [-\eps_{1},\eps_{2}].
        \end{align*}
        Since the support of $\rho_{\eps}(x,\cdot)$ is equal to $T(x) \oplus [-\eps_{1},\eps_{2}]$ for all $x \in I$, it holds $\rho_{\eps}(x,y) = \rho_{\eps}(x,y) \, \chi_{A_{\eps}}(y)$, so it is enough to prove \eqref{cond_unique_ACIM}. Fix $i \in S$ and $x \in I_{i}$. For $n \geq 1$, let $J_{n}$ be the greatest interval contained in $supp \, \rho_{\eps}^{n}(x,\cdot) \cap T(I_{i})$. We argue that, if $J_{n+1} \neq T(I_{i})$, then $\text{Leb}(J_{n+1}) \geq \text{Leb}(J_{n}) + \eps_{1} \wedge \eps_{2}$. Indeed,
        \begin{align}
            \label{ineq_Leb} \nonumber
            \text{Leb}(J_{n+1}) &\geq \max_{j} \, \text{Leb}((T(J_{n} \cap I_{ij}) \oplus [-\eps_{1},\eps_{2}]) \cap T(I_{i}))\\ \nonumber
            &\geq \max_{j} \, \text{Leb}(T(J_{n} \cap I_{ij})) + \eps_{1} \wedge \eps_{2}\\
            &\geq \text{Leb}(J_{n}) + \eps_{1} \wedge \eps_{2}
        \end{align}
        in which the first inequality holds since $(T(J_{n} \cap I_{ij}) \oplus [-\eps_{1},\eps_{2}]) \cap T(I_{i})$ is an interval contained in $supp \, \rho_{\eps}^{n+1}(x,\cdot) \cap T(I_{i})$, and the third inequality is due to Lemma \ref{lemma_expand}. Now, the second inequality holds since, if $J_{n+1} \neq T(I_{i})$, then when the interval $T(J_{n} \cap I_{ij})$ is summed with $[-\eps_{1},\eps_{2}]$, it increases by $\eps_{1}$ on the left and $\eps_{2}$ on the right, but it still does not contain $T(I_{i})$. This implies that the Lebesgue measure of $(T(J_{n} \cap I_{ij}) \oplus [-\eps_{1},\eps_{2}]) \cap T(I_{i})$ is greater than that of $T(J_{n} \cap I_{ij})$ by at least $\eps_{1} \wedge \eps_{2}$.
        
        Taking $n_{\eps}^{i} = \lceil \text{Leb}(T(I_{i}))/\eps_{1} \wedge \eps_{2} \rceil$, it follows by iterating \eqref{ineq_Leb} that $J_{n_{\eps}^{i}} = T(I_{i})$ so
        \begin{align}
            \label{bounded_below}
            \inf_{x \in I_{i}} \inf_{y \in A_{\eps}^{i}} \rho_{\eps}^{n_{\eps}^{i} + 1}(x,y) > 0
        \end{align}
        in which $A_{\eps}^{i} = T(I_{i}) \oplus[-\eps_{1},\eps_{2}]$. Since $A_{\eps}^{i} \cap I_{i}^{c} \neq \emptyset$, it follows from \textbf{(A1.4)} that \eqref{cond_unique_ACIM} holds with
        \begin{equation*}
            n_{\eps} = \kappa + \sum_{i = 1}^{\kappa} n_{\eps}^{i}
        \end{equation*}
        and the proof is complete.
    \end{proof}

    \begin{remark}
        The result in Proposition \ref{prop_ACIM_expand} remains true when $\sigma_{x}^{\eps}$ has support $[-\eps_{1},\eps_{2}]$ for some $x \in I$, and support $[-\eps_{2},\eps_{1}]$ for other values of $x \in I$, which are the cases treated in Sections \ref{Sec_applications} and \ref{SecEx2}.
    \end{remark}

    \begin{remark}
        \label{rem_restricted}
        The proof of Proposition \ref{prop_ACIM_expand} can be easily extended for restricted maps as those in Figures \ref{f3} and \ref{f5} since it analyses the support of $\rho_{\eps}(x,\cdot)$ restricted to $I_{i}$, hence applies to restricted maps. For instance, if \eqref{bounded_below} holds, then an analogous inequality holds for a restricted process with the same $n_{\eps}^{i}$, but taking the infimum over $y \in (T(I_{i}) \oplus[-\eps_{1},\eps_{2}]) \cap I_{i}$. 
    \end{remark}

    \begin{remark}
        \label{rem_mid_branch}
        The proof of Proposition \ref{prop_ACIM_expand} could also be extended to cover the case of the middle component of the map in Figure \ref{f4} in which the image of the branches are not equal. The equality of images $T(I_{ij})$ is used in Lemma \ref{lemma_expand} to treat the case in which $J$ intersects three intervals, to conclude that $T(J) = T(I_{i})$. But in the particular case in Figure \ref{f4}, if $J$ intersects the three intervals of the component $I_{2}$, then clearly $T(J) = T(I_{2})$, so Lemma \ref{lemma_expand} remains true, and so do Proposition \ref{prop_ACIM_expand} and the two remarks above. In particular, the respective Markov chain has a unique ACIM.
    \end{remark}

    \section{Acknowledgements}
    
    The authors thank the Mathematical Sciences Institute and France-Australia Mathematical Sciences and Interactions ANU-CNRS International Research Lab at The Australian National University where this work was carried out.
    
    \bibliographystyle{plain}
	\bibliography{Ref}
\end{document}